\def\ps@pprintTitle{%
 \let\@oddhead\@empty
 \let\@evenhead\@empty
 \def\@oddfoot{}%
 \let\@evenfoot\@oddfoot}
\newtheorem{theorem}{Theorem}[section]
\newtheorem{corollary}[theorem]{Corollary}
\newtheorem{definition}[theorem]{Definition}
\newtheorem{lemma}[theorem]{Lemma}
\newtheorem{proposition}[theorem]{Proposition}
\newtheorem{remark}[theorem]{Remark}
\newenvironment{proof}[1][Proof]{\textbf{#1.} }{\ \rule{0.5em}{0.5em}}
\newcommand{\erfc}{\text{erfc}}
\newcommand{\range}{\text{range}}
\DeclareMathOperator*{\argmin}{arg\,min}
\DeclareMathOperator*{\argmax}{arg\,max}
\newcommand{\esssup}{\operatorname{ess\,sup}}
\newcommand{\Var}{\operatorname{Var}}
\newcommand{\req}[1]{Eq.\,(\ref{#1})}
\begin{document}
\begin{frontmatter}
\title{Distributional Robustness and Uncertainty Quantification for Rare Events} 
\author[UMASS]{Jeremiah Birrell}\ead{birrell@math.umass.edu}
\author[BU]{Paul Dupuis}\ead{dupuis@dam.brown.edu}
\author[UMASS]{Markos A. Katsoulakis}\ead{markos@math.umass.edu}
\author[UMASS]{Luc Rey-Bellet}\ead{luc@math.umass.edu}
\author[UMASS]{Jie Wang}\ead{wang@math.umass.edu}
\address[UMASS]{Department of Mathematics and Statistics, University of Massachusetts Amherst, Amherst, MA 01003, USA} 
\address[BU]{Division of Applied Mathematics, Brown University, Providence, RI 02912, USA} 

\date{\today}
\begin{abstract}
Rare events, and more general risk-sensitive quantities-of-interest (QoIs), are significantly impacted by uncertainty in the tail behavior of a distribution.  Uncertainty in the tail can take many different forms, each of which leads to a particular ambiguity set of alternative models. Distributional robustness bounds over such an ambiguity set constitute a stress-test of the model.  In this paper we develop a method,  utilizing R{\'e}nyi-divergences,  of constructing the ambiguity set that captures a user-specified form of tail-perturbation.  We then obtain distributional  robustness bounds (performance guarantees) for risk-sensitive QoIs over these ambiguity sets, using the known connection between   R{\'e}nyi-divergences and robustness for risk-sensitive QoIs. We also  expand on this  connection in several ways, including a generalization of the Donsker-Varadhan variational formula to R{\'e}nyi divergences, and various tightness results. These ideas are illustrated through applications to uncertainty quantification  in a model of lithium-ion battery failure, robustness of large deviations rate functions, and  risk-sensitive distributionally robust optimization for option pricing.

\end{abstract}
\bigskip
\begin{keyword}
distributional robustness,  rare events, R\'{e}nyi divergence, risk-sensitive functional, uncertainty quantification,  large deviations\\
\MSC[2010] 62B10, 94A17, 60F10, 60E15  
\end{keyword}
\end{frontmatter}
%%%%%%%%%%%%%%%%%%%%%%%%%%%%%%%%%%%%%%%%%%
\section{Introduction}
Risk-sensitive quantities-of-interest (QoI), such as rare-events, partition functions, moment generating functions, and exit times, are of great interest in many applications from engineering, biology, chemistry, and finance.  Significant model uncertainty stemming  from uncertain parameter values, uncertain model form, or some approximation procedure (variational inference, dimension reduction, neglecting memory terms,  linearization, asymptotic approximation, etc.), is inherent in many such systems. Therefore, it is important to study the dependence of risk-sensitive QoIs on model perturbations. In \cite{atar2015robust,2018arXiv180506917D}, it was shown how robustness bounds for risk-sensitive QoIs can be obtained from  variational principles involving information-divergences (specifically, R{\'e}nyi-divergences). Such bounds can be viewed as performance guarantees or stress tests over a collection of alternative models, called an ambiguity set. In this work, we characterize the structure of R{\'e}nyi-divergence-based ambiguity sets by utilizing the connection between R{\'e}nyi-divergences and the moment-generating-function (MGF) of the log-likelihood. More specifically, we show how to construct ambiguity sets that correspond to various tail-behavior (stress-test) scenarios.  We also extend some of the foundational theory connecting risk-sensitive QoIs to information divergences and robust optimization, including a generalization of the Donsker-Varadhan variational formula to R{\'e}nyi divergences (see Theorem \ref{thm:gen_DV}). We show how these tools can be  combined  to obtain explicit uncertainty quantification (UQ) bounds, and illustrate this through several applications:  robustness of a data-driven model of  lithium-ion  battery failure, large deviations rate functions, and   robust option pricing.

\subsection{Risk-Sensitive QoIs, Ambiguity Sets, and Robust UQ}\label{sec:motivation}

\begin{wrapfigure}{r}{.5\textwidth}
\vspace{-1.1cm}
\begin{center}
  \includegraphics[width=.5\textwidth]{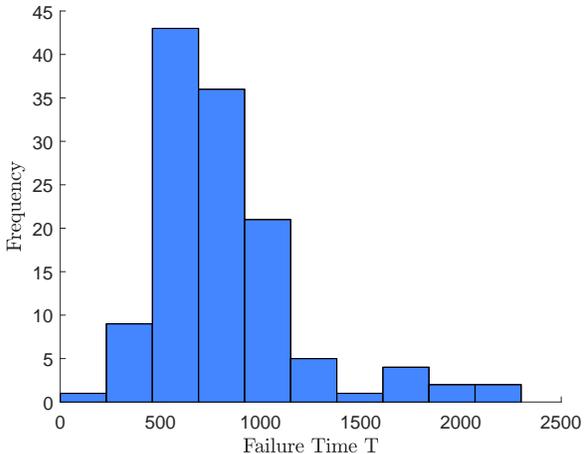}
\end{center}
\vspace{-.5cm}
\caption{Histogram of the battery failure data from \cite{Severson_battery}.  The small number of samples in the tails of the distribution highlights the need for robust UQ when dealing with rare-events and other risk-sensitive QoIs.   }\label{fig:hist_frequency}
\end{wrapfigure}

As motivation for the notion of ambiguity set and the problem of distributionally robust UQ, consider the data set of 124 lithium-ion-battery failure times from \cite{Severson_battery}. A  typical workflow for analyzing  QoIs (expectations of  random variables) might consist of:
\begin{align}\label{eq:workflow}
\text{Data}\xrightarrow[\text{fit/learn}]{}\text{ Model, $P$}\xrightarrow[\text{QoI $\tau$}]{} E_P[\tau],
\end{align}
i.e., the baseline model $P$ (a probability measure) is constructed from the data, and properties of QoIs are computed from $P$ ($E_P$ denotes the expectation under $P$).

A histogram of the data from \cite{Severson_battery}, shown in Figure \ref{fig:hist_frequency}, illustrates a pervasive, unsurprising, but crucial problem with the process (\ref{eq:workflow})  when $\tau$ is   sensitive to the tail(s) of the distribution, i.e., when $\tau$ is risk-sensitive: there is very  little data regarding the tails of the distribution! This paucity of data calls into question the computed quantity $E_P[\tau]$ for risk-sensitive $\tau$, e.g.,  early failure probabilities or the failure rate.  This motivates our consideration of the following distributionally robust UQ problem:
\begin{equation}\label{eq:general_goal}
{\bf Bound:}\,\,\,\,\,\,\, \log(E_Q[\tau]),\,\,\, Q\in\mathcal{U}.
\end{equation}
The robust UQ problem (\ref{eq:general_goal}) involves bounding the QoI over an ambiguity set, $\mathcal{U}$, of alternative models $Q$ (probability measures) that are `near' to $P$ in some sense.
\begin{remark}\label{remark:LDP}
One could equivalently formulate  (\ref{eq:general_goal}) without the logarithm, but in our approach it is generally convenient to include it.  This is further motivated by the following: if $\tau=1_A$ for some (rare) event $A$, then the QoI becomes $\log(P(A))$.  In this form, the connection with tail-behavior is apparent, as $\log(P(A))$ is of central importance in large deviations theory. Our new results, and those of \cite{atar2015robust,2018arXiv180506917D},  have many  connections to large deviations (see also Section \ref{sec:rate_func} below), though we are largely interested in obtaining non-asymptotic robustness bounds.
\end{remark}

The choice of $\mathcal{U}$ in (\ref{eq:general_goal}) is a modeling choice, and each such ambiguity set can be viewed  as a stress-test of $E_P[\tau]$ under tail-perturbations of $P$ of a particular type.  This motivates the stress-test design problem: how does one construct a $\mathcal{U}$ that realizes the desired  stress test?  A solution to this problem is given in Section \ref{sec:tail_behavior}, where we  derive a general method for constructing ambiguity sets that correspond to  stress-testing $E_P[\tau]$ under a prescribed type of tail-perturbation (i.e., prescribing $P(dQ/dP\geq r)$). For example,  $\mathcal{U}$ could consist of $Q$'s whose tails with respect to $P$ have power-law decay under $P$. A given modeling scenario might motivate a particularly  appropriate choice of $\mathcal{U}$; we will discuss this in the examples  in Section \ref{sec:applications}. The neighborhoods we construct will be defined in terms of R{\'e}nyi-divergences, and are thus capable of capturing non-parametric model perturbations.  We also describe several methods of constructing new members of a given ambiguity set, either out of $P$ or of other known members; these further highlight the  non-parametric nature of the results. 
\begin{remark}
The ambiguity set $\mathcal{U}$ could be chosen to be a subset of some finite-dimensional parametric family that includes $P$ (i.e., perturbing the parameters that define $P$), and our methods can be applied to such cases. However, our goal here is the development of methods that apply equally well to non-parametric (infinite dimensional) model neighborhoods.
\end{remark}

 Once  an ambiguity set  is chosen, the distributional robustness  problem of \req{eq:general_goal} can be addressed using tools from information theory.  Specifically, the following risk-sensitive robustness bounds  were proven in \cite{atar2015robust} (where a similar lower bound can also be found):
\begin{proposition}\label{prop:UQ1}
Let $P,Q$ be probability measures on $(\Omega,\mathcal{M})$ and $g:\Omega\to\mathbb{R}$ be measurable.  Then
\begin{align}\label{eq:UQ1_upper}
\log\left[\int e^{ g} dQ\right]\leq \inf_{c>1}\left\{\frac{1}{c} \log\left[\int e^{c g}dP \right]+\frac{1}{c-1}R_{c/(c-1)}(Q\|P)\right\}.
\end{align}
If $dQ=e^{\alpha g}dP/E_P[e^{\alpha g}]$ with $\alpha>0$ and $E_P[e^{\alpha g}]<\infty$ then equality holds in \req{eq:UQ1_upper} and the minimum is at $c=1+\alpha$.
\end{proposition}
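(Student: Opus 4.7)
The plan is to obtain \eqref{eq:UQ1_upper} from a single application of H\"older's inequality. For any $c>1$, I would rewrite the left-hand side as an integral against $P$,
\begin{equation*}
\int e^g\,dQ = \int e^g\,\frac{dQ}{dP}\,dP,
\end{equation*}
and apply H\"older with conjugate exponents $c$ and $c/(c-1)$ (both greater than $1$) to the factors $e^g$ and $dQ/dP$, giving
\begin{equation*}
\int e^g\,dQ \leq \Bigl(\int e^{cg}\,dP\Bigr)^{1/c}\Bigl(\int (dQ/dP)^{c/(c-1)}\,dP\Bigr)^{(c-1)/c}.
\end{equation*}
Taking logarithms, the first factor contributes $\tfrac{1}{c}\log\int e^{cg}dP$, and the definition of $R_{c/(c-1)}(Q\|P)$ converts $\tfrac{c-1}{c}\log\int(dQ/dP)^{c/(c-1)}dP$ into an expression of the form $\tfrac{1}{c-1}R_{c/(c-1)}(Q\|P)$ after the $c$-dependent constants are collected. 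Taking the infimum over $c>1$ then yields \eqref{eq:UQ1_upper}. Degenerate cases ($Q\not\ll P$, or $E_P[e^{cg}]=+\infty$) make the right-hand side trivially $+\infty$.

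For the equality assertion I would invoke the H\"older equality criterion, which requires that $e^{cg}$ and $(dQ/dP)^{c/(c-1)}$ be $P$-a.e.\ proportional. Substituting the exponentially tilted density $dQ/dP = e^{\alpha g}/E_P[e^{\alpha g}]$ produces $(dQ/dP)^{c/(c-1)}\propto e^{\alpha c g/(c-1)}$, which is proportional to $e^{cg}$ exactly when $\alpha/(c-1)=1$, i.e., $c = 1+\alpha$. At this critical value H\"older is saturated, so the bound in \eqref{eq:UQ1_upper} is attained pointwise in $c$; a direct substitution then verifies that both sides evaluate to $\log E_P[e^{(1+\alpha)g}]-\log E_P[e^{\alpha g}]$. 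To confirm that $c=1+\alpha$ is the minimizer, I would additionally check the first-order condition in $c$ by differentiating the $c$-dependent objective, or simply observe that the pointwise matching already identifies it as the optimal choice.

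I do not anticipate any substantive obstacle; the whole argument is essentially H\"older plus the definition of $R_\alpha$. The main points requiring care are the normalization convention for $R_\alpha$ (so that the coefficient $\tfrac{1}{c-1}$ arises correctly from the H\"older output $\tfrac{c-1}{c}\log\int(dQ/dP)^{c/(c-1)}dP$), and the handling of degenerate cases where one of the integrals is infinite—both of which are routine bookkeeping.
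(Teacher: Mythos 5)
Your argument is correct: H\"older with exponents $c$ and $c/(c-1)$ applied to $e^g\cdot(dQ/dP)$ gives exactly $\tfrac{1}{c}\log\int e^{cg}dP+\tfrac{c-1}{c}\log\int(dQ/dP)^{c/(c-1)}dP$, and since $\alpha(\alpha-1)=c/(c-1)^2$ for $\alpha=c/(c-1)$, the second term is precisely $\tfrac{1}{c-1}R_{c/(c-1)}(Q\|P)$; the equality case for the tilted measure and the identification of $c=1+\alpha$ as the minimizer (because the bound holds for all $c>1$ and is attained there) are also handled correctly. The paper itself does not prove Proposition \ref{prop:UQ1} but quotes it from \cite{atar2015robust} and notes that it follows from the variational formula of Proposition \ref{prop:duality} (take $\beta=1$, $\gamma=c$ in \req{eq:Renyi_MGF_sup}); that duality is itself established by a H\"older argument plus the tilted-measure equality case, so your direct proof is a self-contained version of the same underlying mechanism rather than a genuinely different route.
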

Here the QoI is $\tau=e^g$ and  $R_\alpha(Q\|P)$ denotes the  R{\'e}nyi divergences \cite{renyi1961measures}, which are obtained from the MGF of the log-likelihood (when $Q\ll P$ and $\alpha>1$):
 \begin{align}\label{eq:renyi_formula}
R_\alpha(Q\|P)=       \frac{1}{\alpha(\alpha-1)}\log\left(\int e^{(\alpha-1)\log(dQ/dP)} dQ\right).
\end{align}

Proposition \ref{prop:UQ1}  is one of the key mathematical tools that we use and build upon in this paper, and when combined with \req{eq:renyi_formula}, it  illustrates a central message of this work: bounding risk-sensitive QoIs requires control on the MGF of the log-likelihood (i.e., control of all moments).  This contrasts with methods for non-risk-sensitive quantities, which can be expressed in terms of the relative entropy (i.e., the mean of the log-likelihood):
\begin{align}
R(Q\|P)=E_Q[\log(dQ/dP)]
\end{align}
(see \cite{dupuis2011uq,dupuis2015path,BLM,BREUER20131552,glasserman2014} and Proposition \ref{prop:info_ineq0} below). As we will show, combining  \req{eq:UQ1_upper}  with the methods for constructing and characterizing ambiguity sets are developed in   Section \ref{sec:tail_behavior}  results in a powerful toolset for  addressing the robustness problem (\ref{eq:general_goal}).

\subsection{Outline of Our Results}
We build upon the work in  \cite{atar2015robust} in two directions:
\begin{enumerate}
\item Our first innovation in this paper is a comprehensive study of the structure of R{\'e}nyi-divergence  ambiguity sets, the means by which they can be constructed, and their connection to the tail-behavior of $Q$, as compared to $P$. The general definition of R{\'e}yi-divergence ambiguity sets is found in  Section \ref{sec:ambiguity_sets} (see Definition \ref{def:U_Lambda}); that section begins with relevant background on  R{\'e}nyi divergences.  Section \ref{sec:tail_behavior} develops our general method for constructing the ambiguity set that captures a specified form of tail-behavior; see Definition \ref{def:tail_decay_ambiguity_set}.  Lemmas \ref{Lemma:KL_bound} and \ref{Lemma:U_inclusion} and Theorems \ref{Thm:construct_Q} and \ref{thm:tail_decay_ambiguity_set}   then provide further properties of these ambiguity sets and their members, including an algorithm for constructing a representative member. Examples that illustrate these constructions and theorems can be found in Section \ref{sec:U_examples}.

\item Secondly, we  prove several new theorems regarding  the connection  between R{\'e}nyi divergences and  robustness bounds for  risk-sensitive QoIs.  Our primary UQ bound, generalizing Proposition \ref{prop:UQ1}, is found in Theorem \ref{thm:main_UQ_result}. We also prove a generalization of the Donsker-Varadhan variational formula to R{\'e}nyi divergences (Theorem \ref{thm:gen_DV}), as well as  several  new divergence (Theorem \ref{thm:div_property}) and tightness properties (Theorems \ref{thm:tightness1} and \ref{thm:tightness2}).
\end{enumerate}

A number of examples and applications are treated in Section \ref{sec:applications}, illustrating how one can employ our results:
\begin{enumerate}
\item First, we consider robustness of a data-driven model of lithium-ion battery failure, obtained from the data in \cite{Severson_battery}.  Specifically, we  use the method of constructing ambiguity sets from Section \ref{sec:tail_behavior} together with our primary UQ bound, Theorem \ref{thm:main_UQ_result}, to study  robustness of the failure-rate  (see Section \ref{sec:Battery}).
\item The connection between our results and large deviations theory was motivated in Remark \ref{remark:LDP}.   In the second example, we study distributional robustness for large deviations rate functions. Specifically, we study the effect of adding some `roughness' to a model $P$, i.e., we consider alternative models of the form $dQ=Z^{-1}(1+\phi(x))P(dx)$ for an appropriate class of $\phi$'s. Our methods provides a bound on the rate function for IID averages from $Q$ in terms of that of $P$. We also use this example to illustrate the Bennett-ambiguity-sets,  an instance of the  classical  MGF bounds found at the end of Section \ref{sec:MGF_ambiguity} (see Section \ref{sec:rate_func}).
\item As a final example, we   show how   the tightness result, Theorem \ref{thm:tightness1}, can be used to solve certain distributionally robust optimization problems.  Specifically, we consider a distributionally robust optimal stopping problem for  option pricing (see Section \ref{ex:DRO}).
\end{enumerate}

\section{ Ambiguity Sets Based on R\'{e}nyi Divergences}\label{sec:ambiguity_sets}
Given a baseline model $P$, thought of as an approximate but tractable model, an ambiguity set for $P$ is simply a collection of alternative models $\mathcal{U}(P)$ containing $P$; we think of  $\mathcal{U}(P)$ as a `neighborhood' of models that contains the `true' model of the system.  As discussed above, different ambiguity sets capture different types of (possibly non-parametric) model uncertainty; i.e., different types of perturbations of $P$.    As we will see, ambiguity sets defined in terms of R{\'e}nyi divergences are appropriate for capturing uncertainty in the tail behavior of a distribution, thus making them useful tools for deriving distributional robustness bounds on rare events (and other risk-sensitive QoIs); the discussion of UQ will begin in Section \ref{sec:Renyi_UQ}.

In Section \ref{sec:background} we first provide some standard background on the R{\'e}nyi family of divergences.  Then, in Section \ref{sec:MGF_ambiguity},  we develop the connection between R{\'e}nyi divergences and the MGF of the log-likelihood, and use that to define a general notion of R{\'e}nyi-divergence ambiguity set.  We give several  examples stemming from classical MGF bounds in Section \ref{sec:classical_MGF_ex}, but also indicate their severe limitations.  This will serve to motivate the general method of constructing R{\'e}nyi-divergence ambiguity sets that we develop in  Section \ref{sec:tail_behavior}.

\subsection{Background on R\'{e}nyi Divergences}\label{sec:background}

The family of R\'{e}nyi divergences was firstly introduced in \cite{renyi1961measures} by R\'{e}nyi and provide a means of quantifying the discrepancy between two probability measures.  They can be defined in terms of a related family of $f$-divergences as follows \cite{1705001}:
\begin{definition}
Let $P,Q$ be  probability measures on  a measurable space $(\Omega,\mathcal{M})$.  The  Reyni divergence of order $\alpha\in(0,1)\cup(1,\infty)$ of $Q$ with respect to  $P$ is defined by
\begin{align}
R_\alpha(Q\|P)\equiv\frac{1}{\alpha(\alpha-1)}\log\left(\alpha(\alpha-1)D_{f_\alpha}(Q,P)+1\right),
\end{align}
where $D_f$ denotes an $f$-divergence and 
\begin{align}
f_\alpha(t)=      \frac{t^\alpha-1}{\alpha(\alpha-1)}.
\end{align}
\end{definition}
These satisfy the divergence property:
\begin{align}\label{eq:Renyi_div_property}
R_\alpha(Q\|P)\geq 0\text{ and } R_\alpha(Q\|P)=0 \text{ iff }Q=P.
\end{align}
and also
\begin{align}\label{eq:Renyi_nondec}
\alpha\to \alpha R_\alpha(Q\|P) \text{ is non-decreasing.} 
\end{align}

More explicitly, $R_\alpha(Q\|P)$ can be computed as follows: If $\nu$ is a sigma-finite positive measure with $dP=pd\nu$ and $dQ=qd\nu$ then
\begin{align}\label{eq:Renyi_formula}
R_\alpha(Q\|P)=\begin{cases} 
     \frac{1}{\alpha(\alpha-1)}\log\left(\int_{p>0} q^\alpha p^{1-\alpha} d\nu\right) & \text{ if }0<\alpha<1 \text{ or } (\alpha>1 \text{ and } Q\ll P)\\
       +\infty &\text{ if }\alpha>1\text{ and } Q\not\ll P.
         \end{cases}
\end{align}
One also has the property
\begin{align}\label{eq:R_neg_alpha}
R_\alpha(Q\|P)=R_{1-\alpha}(P\|Q),\,\,\,\alpha\in(0,1)
\end{align}
and we use this relation is used to extend the definition of $R_\alpha(Q\|P)$ to $\alpha<0$.

For certain values of $\alpha$, the R{\'e}nyi divergence is related to other commonly used divergences:
\renewcommand{\multirowsetup}{\centering}
\begin{table}[H]
  \caption{Special cases in the R\'{e}nyi divergence family }
  \centering
  \begin{tabular}{c c c}
    \hline\hline                   
    $R_\alpha(Q\|P)$ & Equivalent Formula & Note \\[0.5ex]  \hline
     $\alpha \nearrow 1$ & $R(Q\|P)$  & $R(Q\|P)=\begin{cases} 
    \int \log(dQ/dP) dQ  &\text{ if } Q\ll P\\
    +\infty & \text{ if }Q\not\ll P
         \end{cases}$\\ [1ex]
          $\alpha \searrow 1$ & $R(Q\|P)$  & if  $R(Q\|P)=\infty$ or  $R_\beta(Q\|P)<\infty$ for some $\beta>1$ \\ [1ex]           
          $\alpha \searrow 0$ & $R(P\|Q)$  & \\ [1ex]
     $\alpha=\frac{1}{2}$ & $-4\log(1-\frac{Hel^2(Q\|P)}{2})$  &$Hel^2(Q\|P)=\int(\sqrt{q}-\sqrt{p})^2d\mu$ \\ [1ex] 
     $\alpha=2$ & $\frac{1}{2}\log(1+\chi^2(Q\|P))$  & $\chi^2(Q\|P)=\begin{cases} 
     \int (dQ/dP-1)^2dP  &\text{ if } Q\ll P\\
    +\infty & \text{ if }Q\not\ll P
         \end{cases}$ \\ [1ex] 
     $\alpha \to +\infty $ & $\alpha R_\alpha(Q\|P) \to D_\infty(Q\|P)$  &   
$ D_\infty(Q\|P)=\begin{cases} 
     \log \left(\esssup_Q\frac{dQ}{dP}\right)  &\text{ if } Q\ll P\\
    +\infty & \text{ if }Q\not\ll P
         \end{cases}$     
       \\ [1ex] 
    \hline
    \hline
    
  \end{tabular}
  \label{Table:Renyi}
\end{table}
\noindent Here, $R(\cdot\|\cdot)$ denotes  the relative entropy (i.e., Kullback-Leibler divergence) and $D_\infty(\cdot\|\cdot)$ is called the worst case regret. The above limiting results motivate the definitions $R_1(Q\|P)=R(Q\|P)$ and $R_0(Q\|P)=R(P\|Q)$.

For proofs of the properties listed in this section, as well as many more, see \cite{van2014renyi1}. Note, however, that our definition of the R{\'e}nyi divergences is related to theirs by $D_\alpha(\cdot\|\cdot)=\alpha R_\alpha(\cdot\|\cdot)$.  Explicit formulas for the R{\'e}nyi divergence between members of many common parametric families can be found in \cite{GIL2013124}.

\subsection{R{\'e}nyi-Divergence Ambiguity Sets and MGF Bounds}\label{sec:MGF_ambiguity}

The robustness bounds in Proposition \ref{prop:UQ1} constrain risk-sensitive QoIs under $Q$ in terms of risk-sensitive QoIs under the baseline model,  $P$, along with the R{\'e}nyi divergence between $Q$ and $P$.  Each possible choice of model neighborhood (i.e., ambiguity set), $Q\in\mathcal{U}(P)$, encapsulates a certain level and form of uncertainty regarding the baseline model; specifically, we will see that ambiguity sets are closely related to the tail behavior of the log-likelihood, $\log(dQ/dP)$. As a first step,   the main goals of this subsection are the general definitions, \req{eq:h_def} and \req{eq:ambiguity_Lambda_Def}, of R{\'e}nyi-ambiguity sets, along with the intuition behind them.

The alternative model, $Q$, appears in the UQ upper bound \req{eq:UQ1_upper} only through $R_\alpha(Q\|P)$; this suggests that the natural notion of R{\'e}nyi-divergence ambiguity set is 
\begin{equation}\label{eq:h_def}
\mathcal{U}(P)=\{Q:R_\alpha(Q\|P)\leq h(\alpha), \,\,\,\alpha\in I\},
\end{equation}
 for some choice of function $h\geq 0$ and some  range of $\alpha\in I$. This is in contrast to the non-risk-sensitive UQ bound, \req{goal_oriented_bound}, for which the natural ambiguity sets has a  much simpler form, fixed by a choice of constant $\eta\geq 0$:  
 \begin{equation}\label{eq:KL_U}
 \mathcal{U}_{KL}^\eta(P)=\{Q:R(Q||P)\leq \eta\}.
 \end{equation}
This suggest the question:  how does the  choice of $h$ in \req{eq:h_def} relate to  properties of $Q\in\mathcal{U}$?

\begin{remark}
Ambiguity sets appropriate for the lower UQ bound, \req{eq:UQ1_upper}, can similarly be defined  by imposing an upper bound on $R_\alpha(P\|Q)$.  We focus on the  case of upper UQ  bounds, for which the definition (\ref{eq:h_def}) is appropriate.

Also, from \req{eq:Renyi_nondec} we see that even if $R_{\alpha}(Q\|P)$ is only explicitly constrained at only a single point (i.e., $I=\{\alpha_0\}$ in \req{eq:h_def}), this will generally imply a constraint on the R{\'e}nyi divergences over some range of $\alpha$'s.
\end{remark}

  The  crucial observation is that, by \req{eq:Renyi_formula},  for $\alpha>1$ and $Q\ll P$ the R{\'e}nyi divergence is related to the cumulant generating function of the log-likelihood:
\begin{align}\label{eq:Renyi_MGF}
R_\alpha(Q\|P)=       \frac{1}{\alpha(\alpha-1)}\log\left(\int e^{(\alpha-1)\log(dQ/dP)} dQ\right)= \frac{1}{\alpha(\alpha-1)}\Lambda_Q^{\log(dQ/dP)}(\alpha-1),
\end{align}
where we use the notation
\begin{align}
\Lambda_Q^f(\lambda)=\log E_Q[e^{\lambda f}]
\end{align}
for the cumulant generating function of $f:\Omega\to\overline{\mathbb{R}}$.  Intuitively, \req{eq:Renyi_MGF} implies that to control risk-sensitive QoIs, one needs control on all $Q$-moments of the log-likelihood.  This is in contrast to non-risk-sensitive QoIs (see \req {goal_oriented_bound}), where one only needs control on the first $Q$-moment: $E_Q[\log(dQ/dP)]=R(Q\|P)$.

Based on this observation, we are motivated to study the bounds on  R{\'e}nyi-divergences that arise from (one-sided) MGF bounds (which are commonly used in deriving concentration inequalities \cite{BLM}):
\begin{definition}\label{def:U_Lambda}
Let $\Lambda:[0,\infty)\to [0,\infty]$  satisfy $\Lambda(0)=0$ (a property held by any CGF) and be finite on a neighborhood of $0$.  We define the corresponding R{\'e}nyi-divergence ambiguity set by
\begin{align}\label{eq:ambiguity_Lambda_Def}
\mathcal{U}^\Lambda(P)\equiv&\left\{Q:R_\alpha(Q\|P)\leq \frac{1}{\alpha(\alpha-1)}\Lambda(\alpha-1),\,\,\,\alpha>1\right\}\\
=&\left\{Q:Q\ll P,\,\,\, \Lambda_Q^{\log(dQ/dP)}(\lambda)\leq \Lambda(\lambda),\,\,\,\lambda>0\right\}.\notag
\end{align}
If $\Lambda$ is only defined on some subset $I$ with $[0,\beta)\subset I\subset [0,\infty)$ then we simply extend $\Lambda$ to be  $+\infty$ on $[\beta,\infty)$.
\end{definition}

The following lemma shows that the $\mathcal{U}^\Lambda(P)$ represent more refined stress tests than the  non-risk-sensitive (i.e., relative entropy) ambiguity sets, \req{eq:KL_U}:
\begin{lemma}\label{lemma:U_def}
Let $\Lambda:[0,\infty)\to [0,\infty]$  satisfy $\Lambda(0)=0$, be finite on a neighborhood of $0$, and be differentiable from the right at $0$ with derivative $\eta$.  Then 
\begin{align}
\mathcal{U}^\Lambda(P)\subset\{Q:R(Q\|P)\leq \eta\}.
\end{align}
\end{lemma}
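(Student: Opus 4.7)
The plan is to exploit the second representation in \req{eq:ambiguity_Lambda_Def}, which says that $Q\in\mathcal{U}^\Lambda(P)$ is equivalent to $Q\ll P$ together with the CGF bound $\Lambda_Q^{\log(dQ/dP)}(\lambda)\leq \Lambda(\lambda)$ for all $\lambda>0$. Since $R(Q\|P)$ is the first moment of the log-likelihood, the natural strategy is to extract it from the CGF by dividing through by $\lambda$ and sending $\lambda\searrow 0$.

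Fix $Q\in\mathcal{U}^\Lambda(P)$ and set $f=\log(dQ/dP)$. The first step is Jensen's inequality applied to the convex function $x\mapsto e^{\lambda x}$ with $\lambda>0$, which gives
\begin{align*}
\lambda R(Q\|P)=\lambda E_Q[f]\leq \log E_Q[e^{\lambda f}]=\Lambda_Q^f(\lambda)\leq \Lambda(\lambda).
\end{align*}
Dividing by $\lambda>0$ and using $\Lambda(0)=0$ together with right-differentiability of $\Lambda$ at $0$ with derivative $\eta$,
\begin{align*}
R(Q\|P)\leq \frac{\Lambda(\lambda)}{\lambda}=\frac{\Lambda(\lambda)-\Lambda(0)}{\lambda}\xrightarrow[\lambda\searrow 0]{}\eta,
\end{align*}
which is the claim.

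The only mildly delicate point, and the one I would spend a sentence on in the write-up, is the legitimacy of the Jensen step when $R(Q\|P)=+\infty$. Here I would apply Jensen to the truncations $f_n=\min(f,n)$ (which are bounded above, so $e^{\lambda f_n}\in L^1(Q)$), use $e^{\lambda f_n}\leq e^{\lambda f}$ on the right and $E_Q[f_n^-]\leq E_Q[f^-]\leq 1/e$ to keep $E_Q[f_n]$ well-defined, and then let $n\to\infty$ via monotone convergence on $f_n^+\uparrow f^+$ and dominated convergence on $f_n^-$, recovering $\lambda R(Q\|P)\leq \Lambda(\lambda)$ in the limit. An equivalent alternative route is to invoke the monotonicity \req{eq:Renyi_nondec} and the limiting identity $R_\alpha(Q\|P)\to R(Q\|P)$ as $\alpha\searrow 1$ (justified by the Table, since the finiteness of $\Lambda$ near $0$ forces $R_\beta(Q\|P)<\infty$ for some $\beta>1$), and then pass to the limit in the first line of \req{eq:ambiguity_Lambda_Def} using $\Lambda(\alpha-1)/[\alpha(\alpha-1)]\to \eta$. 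Either way, the proof is a short Jensen-plus-Taylor argument, and I do not foresee any substantive obstacle beyond the measure-theoretic bookkeeping just described.
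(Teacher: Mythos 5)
Your proof is correct. The paper's own argument is the one you sketch as the ``alternative route'': it writes $R(Q\|P)=\lim_{\alpha\searrow 1}R_\alpha(Q\|P)\leq\lim_{\alpha\searrow 1}\frac{1}{\alpha(\alpha-1)}\Lambda(\alpha-1)=\eta$, citing the second line of Table \ref{Table:Renyi} for the first equality (the hypothesis of that line is met because finiteness of $\Lambda$ near $0$ forces $R_\beta(Q\|P)<\infty$ for some $\beta>1$). Your primary route replaces that cited right-continuity of $\alpha\mapsto R_\alpha(Q\|P)$ at $\alpha=1$ with a direct Jensen bound $\lambda R(Q\|P)\leq\Lambda_Q^{\log(dQ/dP)}(\lambda)\leq\Lambda(\lambda)$; under the substitution $\lambda=\alpha-1$ this is precisely the one-sided monotonicity $R(Q\|P)\leq\alpha R_\alpha(Q\|P)$ of \req{eq:Renyi_nondec}, which is strictly weaker than the full limit the paper invokes and is self-contained. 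The limit-taking in $\Lambda$ via right-differentiability at $0$ is identical in both. Your truncation argument for the case $R(Q\|P)=+\infty$ is sound (note $f_n^-=f^-$ for $n\geq 0$, so only the positive parts need monotone convergence), and it is a point the paper glosses over by absorbing it into the Table \ref{Table:Renyi} citation. In short: same skeleton, but you trade a nontrivial cited continuity fact for an elementary Jensen-plus-monotone-convergence argument, which makes the proof more self-contained at the cost of a few extra lines.
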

\begin{proof}
For $Q\in\mathcal{U}^\Lambda(P)$ we have
\begin{align}
R(Q\|P)=\lim_{\alpha\searrow 1} R_\alpha(Q\|P)\leq \lim_{\alpha\searrow 1}\frac{1}{\alpha(\alpha-1)}\Lambda(\alpha-1)=\eta.
\end{align}
 To obtain the first equality, use the second line of Table \ref{Table:Renyi}.
\end{proof}

\begin{remark}\label{remark:U_def}
One could instead write  \req{eq:Renyi_MGF} as $R_\alpha(Q\|P)=\frac{1}{\alpha(\alpha-1)}\Lambda_P^{\log(dQ/dP)}(\alpha)$ and attempt to define ambiguity sets by bounding $\Lambda_P$ by some other  CGF $\Lambda$, i.e., define
\begin{align}\label{eq:bad_ambiguity_def}
\widetilde{\mathcal{U}}^\Lambda=\left\{Q:R_\alpha(Q\|P)\leq \frac{1}{\alpha(\alpha-1)}\Lambda(\alpha),\,\,\alpha>1\right\}.
\end{align}
However, a general CGF will not vanish at $1$ and so  the upper bound will  generally diverge as $\alpha\searrow 1$, making \req{eq:bad_ambiguity_def} a poor way to define R{\'e}nyi-divergence ambiguity sets in general.  This is also why we require $\Lambda(0)=0$ in Definition \ref{def:U_Lambda}, a property that is satisfied by every CGF.
\end{remark}

\subsection{Examples of Ambiguity sets for Bounded Perturbations}\label{sec:classical_MGF_ex}
Several classical MGF bounds, which are commonly used in the derivation of concentration inequalities (again, see \cite{BLM}), can be used to define ambiguity sets via \req{eq:ambiguity_Lambda_Def}:
 \begin{enumerate}
\item Bennett-$(a,b)$ bound: If $a\leq \log(dQ/dP)\leq b$ (which implies $a\leq R(Q\|P)\leq b$) and $R(Q\|P)\leq \eta\leq b$ then  
\begin{align}
\Lambda_Q^{\log(dQ/dP)}(\lambda)\leq \lambda b+\log\left( \frac{b-\eta}{b-a}e^{-\lambda (b-a)}+\frac{\eta-a}{b-a}\right)\equiv \Lambda^{B(a,b)}_\eta(\lambda), \,\,\,\lambda>0.
\end{align}
This follows from Lemma 2.4.1 in  \cite{dembo2009large}.
\item Bennett bound: If $\log(dQ/dP)\in L^2(Q)$ with $\log(dQ/dP)\leq b$, $R(Q\|P)\leq \eta\leq b$, and $\Var_Q[\log(dQ/dP)]\leq \sigma^2$ then
\begin{align}\label{eq:bennett_bound}
\Lambda^{\log(dQ/dP)}_Q(\lambda)\leq \lambda b+\log\left( \frac{(b-\eta)^2}{(b-\eta)^2+\sigma^2}e^{-\lambda(\sigma^2/(b-\eta)+(b-\eta))}+\frac{\sigma^2}{(b-\eta)^2+\sigma^2}\right)\equiv \Lambda^{B}_{b,\eta(\lambda),\sigma}(\lambda),\,\,\,\lambda>0.
\end{align}
This follows from Corollary 2.4.5 in \cite{dembo2009large}.
\item Hoeffding bound: If  $a\leq \log(dQ/dP)\leq b$ and $R(Q\|P)\leq \eta\leq b$ then  
\begin{align}\label{eq:Hoeff_bound}
\Lambda^{\log(dQ/dP)}_Q(\lambda)\leq \eta \lambda+\frac{\sigma^2}{2}\lambda^2\equiv \Lambda^{SG}_{\eta,\sigma}(\lambda),\,\,\,\lambda> 0,
\end{align}
 where  $\sigma^2=(b-a)^2/4$ (the maximum possible variance of a random-variable bounded between $a$ and $b$). This follows from   Hoeffding's inequality (see Theorem 2.8 in \cite{BLM}).
\begin{remark}
This is a special case of a sub-Gaussian bound, hence the superscript $SG$; see Corollary \ref{cor:tail_behavior} below.
\end{remark}
\end{enumerate}
\ref{app:classical_CGF} contains further information on the relationships between these classical cases. 

To use any the Bennett or Hoeffding   bounds one must, at minimum, have an upper bound: $\log(dQ/dP)\leq b$, i.e., if $p(x)$ and $q(x)$ are densities for $P$ and $Q$ respectively, then one must have $q(x)\leq e^b p(x)$.   This is an untenable restriction in many cases, as it implies that $q$ decays at least as fast as $p$. For example, in analyzing the  battery-failure model introduced  in Section \ref{sec:motivation}, the  baseline model  that we fit to the data  (see Section \ref{sec:Battery}) will have power-law decay of the form $T^{\hat \beta}$ at $0$.  One would like to stress test $P$ under both faster ($\beta>\hat \beta$) and {\em slower} ($\beta<\hat \beta$) decay at $0$; the latter is not possible via the above classical bounds.

 Based on this, we are motivated to develop a significantly more general method of constructing  $\Lambda$'s  that can be used to define ambiguity sets.  This construction can be viewed as a stress-test design problem; one designs a $\Lambda$ so that $\mathcal{U}^\Lambda$ captures all $Q$'s that are tail-perturbations of $P$ of a desired form.   This construction has the added benefit of being more intuitively meaningful than either  $h$ in \req{eq:h_def} or $\Lambda$ in \req{eq:ambiguity_Lambda_Def}.

 \section{Ambiguity Sets and Tail Behavior: A Stress-Test Design Problem}\label{sec:tail_behavior}

The goal of this section is to construct the $\Lambda$ that captures tail perturbations of a desired type, i.e., that characterizes a desired stress-test scenario. The corresponding ambiguity set can then be defined via \req{eq:ambiguity_Lambda_Def}.  First we not that the reverse problem has a straightforward  solution; each choice of $\Lambda$ in \req{eq:ambiguity_Lambda_Def}  implies a particular relationship between the tail behavior of $Q\in\mathcal{U}^{\Lambda}(P)$, as compared to $P$.  This can be made concrete  via a Chernoff bound:
\begin{lemma}\label{lemma:Chernoff}
If $Q\in\mathcal{U}^\Lambda(P)$ then for any $r\in\mathbb{R}$ we have
\begin{align}
P(\log(dQ/dP)\geq r) \leq \exp\left(-\sup_{\lambda>0}\left\{(\lambda+1) r-\Lambda(\lambda)\right\}\right).
\end{align}
\end{lemma}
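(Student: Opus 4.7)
The plan is a Chernoff-type argument, with the one wrinkle that the hypothesis controls the MGF of the log-likelihood under $Q$ while the tail we need to bound lives under $P$. First I would use the absolute continuity $Q \ll P$ (guaranteed by Definition \ref{def:U_Lambda}) to rewrite the MGF condition as a bound on $P$-moments of the density. Concretely, for each $\lambda > 0$,
\begin{align*}
E_P\!\left[(dQ/dP)^{\lambda+1}\right] = E_Q\!\left[(dQ/dP)^{\lambda}\right] = e^{\Lambda_Q^{\log(dQ/dP)}(\lambda)} \leq e^{\Lambda(\lambda)},
\end{align*}
where the final inequality is the defining estimate of $\mathcal{U}^\Lambda(P)$ from the second line of \req{eq:ambiguity_Lambda_Def}.

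Next I would apply Markov's inequality to the nonnegative random variable $(dQ/dP)^{\lambda+1}$ under $P$. Rewriting the event as $\{\log(dQ/dP) \geq r\} = \{(dQ/dP)^{\lambda+1} \geq e^{(\lambda+1)r}\}$ and combining with the moment bound above yields $P(\log(dQ/dP) \geq r) \leq \exp(-(\lambda+1)r + \Lambda(\lambda))$ for every $\lambda > 0$; taking the infimum over $\lambda > 0$ then gives the stated estimate.

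There is no real obstacle; the only subtle point worth flagging is the shift by one in the Chernoff exponent. Because the MGF control from the ambiguity set lives under $Q$ while the tail event lives under $P$, the change of measure costs one extra factor of $dQ/dP$, which is why the Legendre-type expression involves $(\lambda+1) r$ rather than the more familiar $\lambda r$. The set where $dQ/dP$ vanishes causes no trouble since there $\log(dQ/dP) = -\infty < r$ for finite $r$, so it contributes nothing to the tail probability.
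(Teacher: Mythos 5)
Your proof is correct and is essentially the paper's own argument: both apply Markov's (Chernoff's) inequality to $(dQ/dP)^{\lambda+1}$ under $P$, identify $E_P[(dQ/dP)^{\lambda+1}]$ with $\exp(\Lambda_Q^{\log(dQ/dP)}(\lambda))$ via the change of measure, invoke the defining bound of $\mathcal{U}^\Lambda(P)$, and optimize over $\lambda>0$. Your remarks on the origin of the $(\lambda+1)$ shift and on the set where $dQ/dP$ vanishes are accurate but not load-bearing.
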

\begin{proof}
For $\lambda>0$, a Chernoff bound gives
\begin{align}
P(\log(dQ/dP)\geq r) \leq E_P\left[e^{(\lambda+1)\log(dQ/dP)}\right]e^{-(\lambda+1)r}=\exp\left(\Lambda_Q^{\log(dQ/dP)}(\lambda)-(\lambda+1)r\right).
\end{align}
Bounding  $\Lambda_Q^{\log(dQ/dP)}(\lambda)\leq \Lambda(\lambda)$ and then taking the infimum over $\lambda>0$ gives the result.
\end{proof}

The above lemma yields a bound on the tail behavior, given a specified $\Lambda$.  The starting point for the reverse process, that of constructing a $\Lambda$ that captures a desired tail behavior, is the following expression for the CGF of the log-likelihood:
\begin{lemma}\label{lemma:tail}
Let $Q\ll P$. For $r\geq 0$ define $G(r)=P(dQ/dP\geq r)$. Then
\begin{align}\label{eq:G_MGF_bound}
\Lambda_Q^{\log(dQ/dP)}(\lambda)= \log\left((\lambda+1)\int_{0}^\infty G(z)z^\lambda dz\right),\,\,\,\lambda\geq 0.
\end{align}
%actually, holds for $\lambda>-1$

$G$ also has the following properties:
\begin{align}\label{eq:G_properties}
\text{ $G(r)$ is non-increasing, \,left continuous, \,$G(0)=1$,\, $G(r)\to 0$ \,as\, $r\to\infty$, \,and \,$\int_0^\infty G(z)dz=1$.}
\end{align}
Moreover, one can write $G(r)=\mu([r,\infty))$ where $\mu$ is the distribution of $dQ/dP$ under $P$; $\mu$ is a probability measure on $[0,\infty)$ with mean  equal to $1$. 
\end{lemma}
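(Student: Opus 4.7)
The plan is to derive \req{eq:G_MGF_bound} by expressing $\Lambda_Q^{\log(dQ/dP)}(\lambda)$ as the logarithm of the $(\lambda+1)$-th $P$-moment of $dQ/dP$ and then applying the layer-cake/tail-integral representation to that moment; the properties of $G$ collected in \req{eq:G_properties} and the final statement about $\mu$ will then fall out by identifying $G$ with the survival function of the pushforward measure of $P$ under $dQ/dP$.

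First I would use the identity $E_Q[f(dQ/dP)] = E_P[(dQ/dP)\,f(dQ/dP)]$ (which is valid because $Q\ll P$) with $f(x)=x^\lambda$ to obtain
\[
\Lambda_Q^{\log(dQ/dP)}(\lambda) = \log E_Q\bigl[(dQ/dP)^\lambda\bigr] = \log E_P\bigl[(dQ/dP)^{\lambda+1}\bigr].
\]
Setting $X = dQ/dP$ (a nonnegative, $P$-a.s.\ finite random variable), the layer-cake identity $x^{\lambda+1} = (\lambda+1)\int_0^\infty t^\lambda \mathbf{1}_{\{t<x\}}\,dt$ together with Fubini--Tonelli (the integrand is nonnegative) gives
\[
E_P[X^{\lambda+1}] = (\lambda+1)\int_0^\infty t^\lambda\, P(X>t)\,dt,
\]
and since $P(X>t)=P(X\geq t)=G(t)$ for all but countably many $t$, taking logarithms yields \req{eq:G_MGF_bound}.

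For the remaining assertions, let $\mu$ denote the pushforward of $P$ by $X=dQ/dP$. Then $\mu$ is a Borel probability measure on $[0,\infty)$ (finiteness of $X$ holds $P$-a.s.\ because $Q\ll P$), and by construction $G(r)=\mu([r,\infty))$. Monotonicity, $G(0)=1$, left continuity of $G$, and $G(r)\to 0$ as $r\to\infty$ then reduce to the continuity of $\mu$ along the monotone set sequences $[r_n,\infty)\downarrow[r,\infty)$ as $r_n\uparrow r$ and $[r,\infty)\downarrow\emptyset$, respectively. The normalization $\int_0^\infty G(z)\,dz = 1$ is exactly the $\lambda=0$ case of \req{eq:G_MGF_bound} (equivalently, $E_P[X]=E_P[dQ/dP]=Q(\Omega)=1$), which simultaneously identifies the mean of $\mu$ as $1$.

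I do not anticipate any serious obstacle: everything reduces to standard measure-theoretic bookkeeping once the layer-cake identity is invoked. The only step that requires a moment of care is distinguishing strict from non-strict inequalities in the transition from $P(X>t)$ to $G(t)=P(X\geq t)$ inside the Fubini computation, which is handled by the familiar fact that $\{t:P(X=t)>0\}$ is at most countable and therefore Lebesgue-null.
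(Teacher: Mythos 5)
Your proof is correct and follows essentially the same route as the paper: both reduce \req{eq:G_MGF_bound} to the tail-integral (layer-cake) representation of $E_P[(dQ/dP)^{\lambda+1}]$ via Fubini--Tonelli, the only cosmetic difference being that the paper applies the tail formula to $(dQ/dP)^{\lambda+1}$ and then substitutes $z=s^{1/(\lambda+1)}$, whereas you build the power into the layer-cake identity directly. Your extra care about $P(X>t)$ versus $P(X\geq t)$ on the (at most countable, hence Lebesgue-null) atom set is a detail the paper glosses over, and the remaining properties of $G$ and $\mu$ are handled identically.
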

\begin{remark}
Many of the above properties are  satisfied by $P(Y\geq r)$ for any random variable $Y$.  What is special here is that $Y\equiv dQ/dP$ is  non-negative and integrates to $1$ under $P$, i.e. $YdP$ is a probability measure.  These lead to the key properties we will need going forward, namely  $\int_0^\infty G(z)dz=1$ and $\mu$ has mean $1$.
\end{remark}
\begin{proof}
Using Fubini's theorem, we have
\begin{align}
\Lambda_Q^{\log(dQ/dP)}(\lambda)=\log\left(\int (dQ/dP)^{\lambda+1}dP\right)=\log\left(\int_0^\infty P\left(dQ/dP\geq s^{1/(\lambda+1)}\right)ds\right).
\end{align}
Now change variables to $z=s^{1/(\lambda+1)}$.  This proves \req{eq:G_MGF_bound}. Fubini's theorem similarly gives
\begin{align}
\int_0^\infty G(z)dz=\int \int_0^\infty 1_{dQ/dP\geq z} dzdP=\int \frac{dQ}{dP}dP=1.
\end{align}
 The remaining properties are straightforward to verify.
\end{proof}
\begin{remark}
   Lemma \ref{lemma:tail} combined with   \req{eq:renyi_formula} shows that R{\'e}nyi divergences only have access to information on $P$ and $Q$ through $P(dQ/dP\geq r)$, $r\geq 0$. If, for instance, $dQ=qdx$ and $dP=pdx$ on $\mathbb{R}^n$ then
\begin{align}
P(dQ/dP\geq r)=\int_{q\geq rp} p(x)dx.
\end{align}
As $r$ increases, the decay of this quantity describes how the  large relative-perturbations of $q$ compared to $p$ become concentrated into the tail of $P$  (i.e., where $p(x)$ is small). Intuitively, this is why control on $P(dQ/dP\geq r)$ provides one with control on rare-events, and other risk-sensitive QoIs.
\end{remark}

Lemma \ref{lemma:tail} suggests that one construct  R{\'e}nyi-divergence ambiguity sets  by specifying $P(dQ/dP\geq r)$, thus motivating the following definition:
\begin{definition}\label{def:tail_decay_ambiguity_set}
Let  $\mu$ be a probability measure on $[0,\infty)$ with mean $1$ and  define $G_\mu(r)=\mu([r,\infty))$, $r\geq 0$ and 
\begin{align}\label{eq:Lambda_mu_def}
\Lambda_\mu(\lambda)\equiv\log\left((\lambda+1)\int_{0}^\infty G_\mu(z)z^\lambda dz\right),\,\,\,\lambda\geq 0.
\end{align}
Note that  $G_\mu$ satisfies the properties in \req{eq:G_properties} and $\Lambda_\mu(0)=0$.  Moreover,  letting  $\nu(dz)=z\mu(dz)$ we have
\begin{align}%actually, holds for \lambda>-1.
\Lambda_\mu(\lambda)=\Lambda_\nu^{\log(d\nu/d\mu)}(\lambda)=\lambda(1+\lambda)R_{1+\lambda}(\nu\|\mu)\geq 0,\,\,\,\,\lambda>0.
\end{align}

If $\Lambda_\mu$ is also  finite in a neighborhood of $0$ then for any $P$ we can  define the corresponding ambiguity set, as in \req{eq:ambiguity_Lambda_Def}:
\begin{align}\label{eq:U_mu_def}
\mathcal{U}^\mu(P)\equiv\mathcal{U}^{\Lambda_\mu}(P)=\left\{Q:Q\ll P \text{ and } \Lambda_Q^{\log(dQ/dP)}(\lambda)\leq \Lambda_\mu(\lambda),\,\,\,\lambda>0\right\}.
\end{align}

\end{definition}
%$\Lambda_\mu$ is automatically finite on $(-1,0]$

The following lemma  shows that $\mathcal{U}^{\mu}(P)$ at least captures our stated motivation: it contains all alternative models for which $ P(dQ/dP\geq r)=G_\mu(r)$, $r\geq 0$. This lemma also shows that the  ambiguity sets  (\ref{eq:U_mu_def}) also imply a bound on the relative entropy:
\begin{lemma}\label{Lemma:KL_bound}
Let  $\mu$  be as in Definition \ref{def:tail_decay_ambiguity_set} (in particular, we assume $\Lambda_\mu$ is finite on a neighborhood of $0$).   Then $\mathcal{U}^{\mu}(P)$ contains all $Q$ for which $Q\ll P$ and $dQ/dP$ is distributed as $\mu$ under $P$.  Note that such $Q$'s will satisfy
\begin{align}
 &\Lambda_Q^{\log(dQ/dP)}(\lambda)=\Lambda_\mu(\lambda)\,\, \text{ for all }\,\,\lambda\geq0,\label{eq:saturate_bound}\\ %actually, holds for \lambda>-1.
&R(Q\|P)=  \frac{d}{d\lambda}\bigg|_{\lambda=0}\Lambda_\mu(\lambda)= 1+\int_{0}^\infty G_\mu(z)\log(z) dz.\label{eq:KL_equality}\notag
\end{align}
Any alternative model $Q$ satisfying \req{eq:saturate_bound} will be said to saturate the MGF bound that defines  $\mathcal{U}^{\mu}(P)$.

More generally,  
\begin{align}\label{eq:tail_set_rel_ent}
R(Q\|P)\leq  \frac{d}{d\lambda}\bigg|_{\lambda=0}\Lambda_\mu(\lambda)= 1+\int_{0}^\infty G_\mu(z)\log(z) dz,\,\,\,\,\,Q\in\mathcal{U}^{\mu}(P).
\end{align}
\end{lemma}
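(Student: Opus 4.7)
The plan is to split the proof into two matching parts. The first concerns $Q$'s whose density-ratio $dQ/dP$ is distributed as $\mu$ under $P$: I must show that $\Lambda_Q^{\log(dQ/dP)}$ coincides identically with $\Lambda_\mu$ (so such $Q$ lies in $\mathcal{U}^\mu(P)$ and saturates the bound) and derive the equality (\ref{eq:KL_equality}). The second part is the general bound (\ref{eq:tail_set_rel_ent}) for arbitrary $Q\in\mathcal{U}^\mu(P)$, which I would obtain by directly invoking Lemma \ref{lemma:U_def} with $\Lambda=\Lambda_\mu$.

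For the saturation step, the key observation is that Lemma \ref{lemma:tail} expresses $\Lambda_Q^{\log(dQ/dP)}(\lambda)$ purely in terms of the survival function $G(r)=P(dQ/dP\geq r)$. If $dQ/dP$ is distributed as $\mu$ under $P$, then $G(r)=\mu([r,\infty))=G_\mu(r)$ on $[0,\infty)$, so substituting into (\ref{eq:G_MGF_bound}) matches $\Lambda_\mu$ term-for-term by Definition \ref{def:tail_decay_ambiguity_set}. This gives (\ref{eq:saturate_bound}) with equality for all $\lambda\geq 0$, hence membership in $\mathcal{U}^\mu(P)$.

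For (\ref{eq:KL_equality}), I would use the standard CGF identity $R(Q\|P)=E_Q[\log(dQ/dP)]=\tfrac{d}{d\lambda}\big|_{\lambda=0}\Lambda_Q^{\log(dQ/dP)}(\lambda)$; since the two CGFs now agree, this right-derivative equals $\Lambda_\mu'(0^+)$. Writing $f(\lambda)=(\lambda+1)\int_0^\infty G_\mu(z)z^\lambda dz$, one computes $f(0)=\int_0^\infty G_\mu(z)\,dz=1$ (from Lemma \ref{lemma:tail}) and $f'(0)=1+\int_0^\infty G_\mu(z)\log(z)\,dz$ by differentiating under the integral; then $\Lambda_\mu'(0^+)=f'(0)/f(0)$ gives the stated formula. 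The general bound (\ref{eq:tail_set_rel_ent}) then follows immediately from Lemma \ref{lemma:U_def} applied with $\Lambda=\Lambda_\mu$ and $\eta=\Lambda_\mu'(0^+)$, using $\Lambda_\mu(0)=0$ and the hypothesis that $\Lambda_\mu$ is finite on a neighborhood of $0$.

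The main obstacle I anticipate is rigorously justifying the differentiation under the integral sign when computing $\Lambda_\mu'(0^+)$: the integrand $G_\mu(z)z^\lambda\log z$ is singular at $z=0$ and has $z^\lambda$ growth for large $z$, so the standard application of dominated convergence requires control on both ends. The tool is the assumption that $\Lambda_\mu$ is finite on $[0,\lambda_0)$ for some $\lambda_0>0$, which gives integrability of $G_\mu(z)z^{\lambda_0-\epsilon}$ and allows one to dominate $|G_\mu(z)z^\lambda\log z|$ uniformly for $\lambda$ near $0$ by a single $\mu$-integrable function (splitting into $z\leq 1$ and $z>1$ regimes).
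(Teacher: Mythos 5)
Your proposal is correct and follows essentially the same route as the paper's proof: Lemma \ref{lemma:tail} gives the saturation identity $\Lambda_Q^{\log(dQ/dP)}=\Lambda_\mu$ and hence membership in $\mathcal{U}^\mu(P)$, the derivative formula is obtained by differentiating under the integral (justified via dominated convergence using finiteness of $\Lambda_\mu$ near $0$), and the general bound \req{eq:tail_set_rel_ent} follows from Lemma \ref{lemma:U_def}. Your extra care about the domination near $z=0$ and $z>1$ is a reasonable elaboration of the step the paper dispatches in one sentence.
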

\begin{proof}
 From Lemma \ref{lemma:tail}, we see that   $dQ/dP \overset{P}{\sim} \mu$ implies $\Lambda_Q^{\log(dQ/dP)}=\Lambda_\mu$ and hence $Q\in\mathcal{U}^{\mu}(P)$ (see the definition  in \req{eq:U_mu_def}).

The dominated convergence theorem implies that $\lambda\to \int_{0}^\infty G_\mu(z)z^\lambda dz$ is $C^1$ on a neighborhood of $0$ and can be differentiated under the integral (here we use the assumption that $\Lambda_\mu$ is finite on a neighborhood of $0$). The bound on the relative entropy then follows from Lemma \ref{lemma:U_def}, including the case of equality (\ref{eq:KL_equality}).
\end{proof}

\begin{remark}
Attempting to generalize \req{eq:Lambda_mu_def} to 
\begin{align}\label{eq:Lambda_G_def}
\Lambda_G(\lambda)\equiv\log\left((\lambda+1)\int_{0}^\infty G(z)z^\lambda dz\right),\,\,\,\lambda\geq 0,
\end{align}%actually, holds for \lambda>-1.
where $G(r)\geq\mu([r,\infty))$, and then replacing $\Lambda_\mu$ with $\Lambda_G$ in \req{eq:U_mu_def}  does not produce any  useful new ambiguity sets. As discussed in Remark \ref{remark:U_def}, a `useful' definition requires $\Lambda_G(0)=0$. From \req{eq:Lambda_G_def}, we see that this implies  $\int_0^\infty G(z)dz=1$.  Combine this fact with $G(r)\geq\mu([r,\infty))$  and $\int_0^\infty \mu([z,\infty))dz=1$ and we see that we must have $G(r)=\mu([r,\infty))$ for a.e. $r$. Weakening the equality to a.e. equality does not modify the definition of $\Lambda_\mu$ or of $\mathcal{U}^\mu(P)$, and is therefore irrelevant.
\end{remark}

The following theorem shows how to construct an element of $\mathcal{U}^\mu (P)$ that achieves the defining bound in \req{eq:U_mu_def}, i.e., that satisfies $\Lambda_Q^{\log(dQ/dP)}(\lambda)=\Lambda_\mu(\lambda)$ for all $\lambda>0$:
\begin{theorem}\label{Thm:construct_Q}
Let $\mu$, $G_\mu$,  and $\Lambda_\mu$ be as in Definition \ref{def:tail_decay_ambiguity_set}.   Let $P$ be a probability measure and $\nu$ be a sigma-finite positive measure such that $dP=pd\nu$. Define $\psi:[0,\infty)\to[0,1]$ by $\psi(y)=P(p\leq y)$.   Assume that 
\begin{align}\label{eq:G_psi_range_condition}
\range( G_\mu)\subset \range(\psi)\cup\{1\}.
\end{align}

Define $H_\mu:[0,1]\to[0,\infty]$ by
\begin{align}\label{eq:H_def}
H_\mu(\rho)=\sup\{r\in[0,\infty):G_\mu(r)\geq \rho\}.
\end{align}
 and $\phi:[0,\infty)\to[0,\infty)$  by
\begin{align}
\phi(y)=H_\mu(\psi(y))1_{\psi(y)>0}.
\end{align}
Then $Q(dx)\equiv \phi(p(x)) P(dx)$ is a probability measure, $dQ/dP\overset{P}{\sim} \mu$, and  $Q\in\mathcal{U}^\mu(P)$.
\end{theorem}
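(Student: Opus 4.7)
The plan is to verify the three claims in the logically simplest order: establish first that $\phi(p)$ has distribution $\mu$ under $P$, which automatically yields $Q(\Omega)=\int \phi(p)\,dP=\int z\,\mu(dz)=1$ since $\mu$ has mean one, and then invoke Lemma \ref{Lemma:KL_bound} to conclude $Q\in\mathcal{U}^\mu(P)$. Thus essentially all of the work lies in proving the distributional identity $P(\phi(p)\ge r)=G_\mu(r)$ for every $r\ge 0$.

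The first ingredient is the inversion identity
\begin{align*}
H_\mu(\rho)\ge r \iff G_\mu(r)\ge \rho,\qquad \rho\in[0,1],\; r\in[0,\infty),
\end{align*}
which I would derive from the fact that $G_\mu$ is non-increasing and left-continuous (Lemma \ref{lemma:tail}): the super-level set $\{s:G_\mu(s)\ge \rho\}$ is a closed interval of the form $[0,H_\mu(\rho)]$, so $r$ lies in it iff $G_\mu(r)\ge \rho$. Consequently, for $r>0$,
\begin{align*}
P(\phi(p)\ge r)=P\bigl(\psi(p)>0,\;H_\mu(\psi(p))\ge r\bigr)=P\bigl(0<\psi(p)\le G_\mu(r)\bigr).
\end{align*}

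The second, more delicate ingredient is the identity $P(\psi(p)\le v)=v$ for every $v\in\range(\psi)$. For $v=\psi(y_0)$, set $y_v=\sup\{y:\psi(y)\le v\}$; right-continuity of $\psi$ gives $\psi(y_v)=v$, and monotonicity of $\psi$ yields $\{\psi(p)\le v\}=\{p\le y_v\}$, so $P(\psi(p)\le v)=\psi(y_v)=v$. Combined with the trivial case $v=1$, this is exactly where the range hypothesis (\ref{eq:G_psi_range_condition}) enters: it guarantees $G_\mu(r)\in\range(\psi)\cup\{1\}$, so the identity applies at $v=G_\mu(r)$. I would also need to remove the indicator $1_{\psi(p)>0}$, by showing $P(\psi(p)>0)=1$: since $dP=p\,d\nu$ one may assume $p>0$ $P$-a.s., and a short argument using $\psi(p(X))=\int_{\{p\le p(X)\}}p\,d\nu$ together with the essential infimum of $p$ (distinguishing whether it is an atom of $P$) shows $\psi(p)>0$ holds $P$-a.s.

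Putting these pieces together, $P(\phi(p)\ge r)=G_\mu(r)$ for all $r>0$, while for $r=0$ both sides equal $1$; hence $dQ/dP=\phi(p)$ is distributed as $\mu$ under $P$, and the remaining assertions follow. The hardest step is the matching identity for $\psi$: if $\psi$ has jumps then not every value of $[0,1]$ lies in $\range(\psi)$, and $P(\psi(p)\le v)$ can strictly exceed $v$ on those gaps; the hypothesis (\ref{eq:G_psi_range_condition}) is designed precisely so that $G_\mu$ never takes a value in such a gap, which is what allows the distributional identity to go through unconditionally.
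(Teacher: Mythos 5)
Your overall strategy coincides with the paper's: prove $P(\phi(p)\geq r)=G_\mu(r)$ for all $r\geq 0$ via the inversion identity for $H_\mu$ together with the identity $P(\psi(p)\leq v)=v$ for $v\in\range(\psi)$, then deduce $Q(\Omega)=1$ from the mean-one property of $\mu$ and membership in $\mathcal{U}^\mu(P)$ from Lemma \ref{Lemma:KL_bound}. The reduction of $P(\phi(p)\geq r)$ to $P(0<\psi(p)\leq G_\mu(r))$ is correct, and your derivation of the inversion identity \req{eq:H_inequality} from left-continuity and monotonicity of $G_\mu$ is, if anything, more self-contained than the paper's appeal to quantile functions.

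There is, however, a genuine flaw in your justification of the key identity $P(\psi(p)\leq v)=v$ for $v\in\range(\psi)$. You assert that right-continuity of $\psi$ forces $\psi(y_v)=v$, where $y_v=\sup\{y:\psi(y)\leq v\}$, and hence that $\{\psi(p)\leq v\}=\{p\leq y_v\}$. Right-continuity only yields $\psi(y_v)\geq v$; if $p$ has an atom at $y_v$ under $P$, the set $\{\psi\leq v\}$ can equal $[0,y_v)$ with $\psi(y_v)>v$. Concretely, let $\nu$ be counting measure on two points with $p=1/3$ and $p=2/3$: then $v=1/3\in\range(\psi)$ and $y_v=2/3$, but $\psi(y_v)=1$ and $\{\psi(p)\leq 1/3\}=\{p<y_v\}$, not $\{p\leq y_v\}$. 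The conclusion $P(\psi(p)\leq v)=v$ is still true, but it requires the argument the paper supplies in its second case: since $v\in\range(\psi)$ there is $y^*<y_v$ with $\psi(y^*)=v$, monotonicity forces $\psi\equiv v$ on $[y^*,y_v)$, and therefore $P(\psi(p)\leq v)=P(p<y_v)=\lim_{y\nearrow y_v}\psi(y)=v$. The same left-limit subtlety arises in your (rather sketchy) argument that $P(\psi(p)>0)=1$, which the paper also handles by distinguishing whether $\{\psi=0\}$ contains its supremum. With these cases added, your proof is complete and essentially identical to the paper's.
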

\begin{proof}
Note that $G_\mu(0)=1$, $G_\mu$ is left continuous, and non-increasing, and $G_\mu(r)\to0$ as $r\to\infty$.  Hence $r_0=H_\mu(1)\in[0,\infty)$ is the unique real number satisfying $G_\mu|_{[0,r_0]}=1$, $G_\mu|_{(r_0,\infty)}<1$ . We also have $H_\mu(\rho)\in [r_0,\infty)$ for $\rho\in(0,1]$.  In particular, we see that 
\begin{align}\label{eq:phi_range}
\phi(y)\in[r_0,\infty)\,\text{ if }\, \psi(y)\neq 0.
\end{align}

Next, we need to show that $P(\psi(p)=0)=0$:
\begin{align}
P(\psi(p)=0)=\int 1_{\psi(y)=0} (p_*P)(dy),
\end{align}
where $p_*P$ is the distribution of $p$ under $P$.  Let $y_0=\sup\{y:\psi(y)=0\}$.  $\psi$ is non-decreasing so $\{\psi=0\}=[0,y_0)$ or $\{\psi=0\}=[0,y_0]$.  In the latter case, we have
\begin{align}
P(\psi(p)=0)=P(p\leq y_0)=\psi(y_0)=0.
\end{align}
In the former case, we have $y_0>0$ and
\begin{align}
P(\psi(p)=0)=P(p<y_0)=\lim_{n\to\infty}P(p\leq y_0-1/n)=\lim_{n\to\infty}\psi(y_0-1/n)=0.
\end{align}
Therefore we have proven that  $P(\psi(p)=0)=0$.
%psi is measurable, as is $H_\mu$ since it is non-increasing, so phi is measurable

We now show that $dQ/dP\overset{P}{\sim} \mu$: For $r\leq r_0$ we have  
\begin{align}
P(dQ/dP\geq r)=P(\phi(p)\geq r, \psi(p)> 0)=P(\psi(p)> 0)=1=G_\mu(r),
\end{align}
(here we used \req{eq:phi_range}).

 For $r>r_0$, we write
\begin{align}
P(dQ/dP\geq r)=P\left(H_\mu(\psi(p))\geq r,\psi(p)>0\right).
\end{align}
$G_\mu$ and $H_\mu$ are not inverses, but they still satisfy the following key property:
\begin{align}\label{eq:H_inequality}
H_\mu(\rho)\geq r \,\text{ iff }\, \rho\leq G_\mu(r),\,\,\,\rho\in[0,1],r\in[0,\infty).
\end{align}
The follows from the fact that $H_\mu$ equals the negative of the quantile function of $\widetilde \mu$, where $\widetilde \mu$ is the distribution of the function $r\to-r$ under $\mu$. \req{eq:H_inequality} then follows from the related property of quantile functions.

Using \req{eq:H_inequality}, we obtain
\begin{align}
P(dQ/dP\geq r)=P\left(\psi(p)\leq G_\mu(r),\psi(p)>0\right) =P\left(\psi(p)\leq G_\mu(r)\right)=\int1_{\psi(y)\leq G_\mu(r)} (p_*P)(dy).
\end{align}
$G_\mu(r)<1$, hence  the assumption \req{eq:G_psi_range_condition} implies that $G_\mu(r)\in\range(\psi)$.  Define $y_r=\sup\{y:\psi(y)\leq G_\mu(r)\}$. Similarly to the above, either  $\{\psi(y)\leq G_\mu(r)\}=[0,y_r)$ or $\{\psi(y)\leq G_\mu(r)\}=[0,y_r]$. 

 We know that $G_\mu(r)\in\range(\psi)$, hence one of the following two cases must hold:
\begin{enumerate}
\item  $\psi(y_r)=G_\mu(r)$:  In this case, $\{\psi(y)\leq G_\mu(r)\}=[0,y_r]$ and hence
\begin{align}
P(dQ/dP\geq r)=\int1_{[0,y_r]} (p_*P)(dy)=P(p\leq y_r)=\psi(y_r)=G_\mu(r).
\end{align}
\item $\psi(y_r)\neq G_\mu(r)$ and there exists $y^*<y_r$ with $\psi(y^*)=G_\mu(r)$: $\psi$ is non-decreasing so $\psi(y)=G_\mu(r)$ for all $y\in[y^*,y_r)$ and  $\{\psi(y)\leq G_\mu(r)\}=[0,y_r)$ .  Therefore, we can compute
\begin{align}
P(dQ/dP\geq r)=&\int1_{[0,y_r)} (p_*P)(dy)=P(p<y_r)=\lim_{n\to\infty}P\left(p\leq n^{-1}y^*+(1-n^{-1})y_r\right)\\
=&\lim_{n\to\infty}\psi(n^{-1}y^*+(1-n^{-1})y_r)=G_\mu(r),\notag
\end{align}
since $\psi(n^{-1}y^*+(1-n^{-1})y_r)=G_\mu(r)$ for all $n\in \mathbb{Z}^+$.
\end{enumerate}
Therefore we have proven that $P(dQ/dP\geq r)=G_\mu(r)$ for all $r\geq 0$, and hence   $dQ/dP\overset{P}{\sim} \mu$.   Since $\mu$ has mean $1$, this also implies that $Q$ is a probability measure.  Property (1) of Theorem \ref{thm:tail_decay_ambiguity_set} then gives  $Q\in \mathcal{U}^\mu(P)$.
%Borel sigma algebra on [0,inf) is generated by half-infinite closed intervals
\end{proof}

The following corollary gives a simpler expression for $\phi$, under stronger assumptions:
\begin{corollary}\label{corr:construct_Q}
As in the above proof, let $r_0\equiv H_\mu(1)\in[0,\infty)$ be the unique real number satisfying $G_\mu|_{[0,r_0]}=1$, $G_\mu|_{(r_0,\infty)}<1$.  Suppose $G_\mu$ is continuous and strictly decreasing on $[r_0,\infty)$. Then $H_\mu|_{(0,1]}= (G_\mu|_{[r_0,\infty)})^{-1}$ (in such cases, we simply write $G_\mu^{-1}$ for short).

Suppose that we also have $(0,1)\subset \range(\psi)$ (for example, if $\psi$ is continuous) and define $\phi:[0,\infty)\to[0,\infty)$  by
\begin{align}
\phi(y)= G_\mu^{-1}(\psi(y))1_{\psi(y)>0}.
\end{align}
Then $Q(dx)\equiv\phi(p(x)) P(dx)$ is a probability measure, $dQ/dP\overset{P}{\sim} \mu$, and  $Q\in\mathcal{U}^\mu(P)$.
\end{corollary}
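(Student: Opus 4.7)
My plan is to derive the corollary as a direct consequence of Theorem \ref{Thm:construct_Q}, after verifying (i) that under the stronger hypotheses the generalized inverse $H_\mu$ coincides with the actual inverse $G_\mu^{-1}$ on $(0,1]$, and (ii) that the range condition \req{eq:G_psi_range_condition} is automatically satisfied.

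First, I would pin down the identification $H_\mu|_{(0,1]} = (G_\mu|_{[r_0,\infty)})^{-1}$. Recall $r_0 = H_\mu(1)$ is characterized by $G_\mu \equiv 1$ on $[0,r_0]$ and $G_\mu < 1$ on $(r_0,\infty)$. Fix $\rho \in (0,1]$. Since $G_\mu$ is continuous and strictly decreasing on $[r_0,\infty)$ with $G_\mu(r_0) = 1$ and $G_\mu(r) \to 0$ as $r \to \infty$, the intermediate value theorem gives a unique $r^* \in [r_0,\infty)$ with $G_\mu(r^*) = \rho$. Strict monotonicity then forces $\{r \in [0,\infty) : G_\mu(r) \geq \rho\} = [0,r^*]$, so the supremum defining $H_\mu(\rho)$ equals $r^* = G_\mu^{-1}(\rho)$.

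Next, I would verify the range condition. Since $G_\mu$ is continuous and strictly decreasing on $[r_0,\infty)$ with endpoint values $1$ and $\lim_\infty = 0$ (not attained, by strict monotonicity), the restriction $G_\mu|_{[r_0,\infty)}$ has range $(0,1]$. Together with $G_\mu \equiv 1$ on $[0,r_0]$, this gives $\range(G_\mu) = (0,1]$. The hypothesis $(0,1) \subset \range(\psi)$ then yields $\range(G_\mu) = (0,1] \subset \range(\psi) \cup \{1\}$, which is exactly \req{eq:G_psi_range_condition}.

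Having established these two facts, the $\phi$ defined in the corollary agrees with the $\phi$ of Theorem \ref{Thm:construct_Q}: whenever $\psi(y) > 0$ we have $\psi(y) \in (0,1]$, where $H_\mu = G_\mu^{-1}$, so $G_\mu^{-1}(\psi(y)) 1_{\psi(y)>0} = H_\mu(\psi(y)) 1_{\psi(y)>0}$. Applying Theorem \ref{Thm:construct_Q} directly to this $\phi$ delivers that $Q(dx) = \phi(p(x))P(dx)$ is a probability measure, $dQ/dP \overset{P}{\sim} \mu$, and $Q \in \mathcal{U}^\mu(P)$. The only mild obstacle is the identification step for $H_\mu$, but continuity plus strict monotonicity make this routine; everything else is bookkeeping.
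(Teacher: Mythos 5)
Your proposal is correct and is essentially the argument the paper intends: the corollary is stated as an immediate consequence of Theorem \ref{Thm:construct_Q}, and the only content is exactly the two verifications you carry out — that continuity plus strict monotonicity of $G_\mu$ on $[r_0,\infty)$ force $H_\mu|_{(0,1]}$ to coincide with the genuine inverse (so the two definitions of $\phi$ agree), and that $\range(G_\mu)=(0,1]$ together with $(0,1)\subset\range(\psi)$ gives the range condition \req{eq:G_psi_range_condition}. Both steps are carried out correctly, so nothing further is needed.
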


\begin{remark}
The likelihood ratio $dQ/dP$ constructed via Theorem \ref{Thm:construct_Q} or Corollary \ref{corr:construct_Q} perturbs all regions with same density, $p$, equally; it does not preferentially perturb one tail versus another. As such, the construction is most relevant when one is concerned with risk-sensitive quantities that depend on both (all of the) tail regions, or when one first conditions on only looking at a single tail of interest.
\end{remark}

In the remainder of this subsection, we prove several results that give further information on what models are contained in $\mathcal{U}^\mu(P)$.  First, we have several  inclusions between these ambiguity sets:
\begin{lemma}\label{Lemma:U_inclusion}
Let $\mu_i$, $G_{\mu_i}$, and $\Lambda_{\mu_i}$, $i=1,2$, be as in Definition \ref{def:tail_decay_ambiguity_set}.
\begin{enumerate}
  \item Suppose we have $r_0>0$ such that
\begin{align}\label{eq:U_inclusion_bound1}
G_{\mu_1}(r)\geq G_{\mu_2}(r)\,\text{ for all $r\in[0,r_0]$  and }\, G_{\mu_1}(r)\leq G_{\mu_2}(r) \,\text{ for all  $r> r_0$}.
\end{align}
Then $\mathcal{U}^{\mu_1}(P)\subset\mathcal{U}^{\mu_2}(P)$.
\item Let $I_2=\{\lambda>0:\int_0^\infty G_{\mu_2}(z)z^\lambda dz<\infty\}$. If
\begin{align}\label{eq:U_inclusion_decay}
\lim_{R\to\infty} R^\lambda\int_R^\infty G_{\mu_2}(z)dz=0\,\,\text{ for all $\lambda\in I_2$ }
\end{align}
 and
\begin{align}\label{eq:U_inclusion_bound2}
\int_0^r G_{\mu_2}(z)dz\leq \int_0^rG_{\mu_1}(z)dz\,\, \text{ for all }\,\, r\geq 0
\end{align}
then $\mathcal{U}^{\mu_1}(P)\subset\mathcal{U}^{\mu_2}(P)$.
\end{enumerate}
\end{lemma}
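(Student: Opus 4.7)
The plan is to reduce both parts of the lemma to a single pointwise comparison: $\Lambda_{\mu_1}(\lambda)\leq \Lambda_{\mu_2}(\lambda)$ for all $\lambda>0$. Indeed, by the definition \req{eq:U_mu_def}, any $Q\in\mathcal{U}^{\mu_1}(P)$ satisfies $\Lambda_Q^{\log(dQ/dP)}(\lambda)\leq \Lambda_{\mu_1}(\lambda)$, and chaining this with $\Lambda_{\mu_1}\leq \Lambda_{\mu_2}$ immediately gives $Q\in\mathcal{U}^{\mu_2}(P)$. Since $\Lambda_{\mu}(\lambda)=\log((\lambda+1)\int_0^\infty G_\mu(z)z^\lambda dz)$, the problem further reduces to showing
\begin{align*}
\int_0^\infty G_{\mu_1}(z)z^\lambda dz\leq \int_0^\infty G_{\mu_2}(z)z^\lambda dz,\quad \lambda>0,
\end{align*}
and I can freely restrict attention to $\lambda$ with $\Lambda_{\mu_2}(\lambda)<\infty$ (the complementary case being trivial).

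For part (1), I would exploit the single-crossing structure of $G_{\mu_2}-G_{\mu_1}$ at $r_0$. The key algebraic observation is that $z^\lambda\leq r_0^\lambda$ on $[0,r_0]$ where $G_{\mu_2}-G_{\mu_1}\leq 0$, and $z^\lambda\geq r_0^\lambda$ on $(r_0,\infty)$ where $G_{\mu_2}-G_{\mu_1}\geq 0$; multiplying each inequality by the appropriately signed factor and integrating gives
\begin{align*}
\int_0^\infty (G_{\mu_2}-G_{\mu_1})(z)z^\lambda dz \geq r_0^\lambda\int_0^\infty (G_{\mu_2}-G_{\mu_1})(z)dz=0,
\end{align*}
where the last equality uses the crucial property $\int_0^\infty G_{\mu_i}(z)dz=1$ recorded in \req{eq:G_properties}.

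For part (2), I would integrate by parts, substituting $F_{\mu_i}(r):=\int_r^\infty G_{\mu_i}(z)dz$ (an absolutely continuous function with $F_{\mu_i}'=-G_{\mu_i}$ a.e.). For $R<\infty$ one has
\begin{align*}
\int_0^R G_{\mu_i}(z)z^\lambda dz = -F_{\mu_i}(R)R^\lambda+\lambda\int_0^R F_{\mu_i}(z)z^{\lambda-1}dz,
\end{align*}
since $0^\lambda=0$ for $\lambda>0$. Assumption \req{eq:U_inclusion_bound2} combined with $\int_0^\infty G_{\mu_i}=1$ gives $F_{\mu_1}(r)\leq F_{\mu_2}(r)$ for all $r$, and assumption \req{eq:U_inclusion_decay} guarantees $F_{\mu_2}(R)R^\lambda\to 0$ for $\lambda\in I_2$; then the sandwich $0\leq F_{\mu_1}\leq F_{\mu_2}$ forces the boundary term for $\mu_1$ to vanish as well. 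Letting $R\to\infty$ and comparing the two integrals via $F_{\mu_1}\leq F_{\mu_2}$ yields the desired inequality.

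I expect the main obstacle to be bookkeeping around the boundary terms and the domain of finiteness of $\Lambda_{\mu_i}$ in part (2): one needs the decay condition \req{eq:U_inclusion_decay} precisely to justify the vanishing of $F_{\mu_i}(R)R^\lambda$ at infinity, and one needs to handle $\lambda\notin I_2$ separately (where the claim is vacuous because $\Lambda_{\mu_2}(\lambda)=+\infty$). Apart from that, the argument is essentially a clean single-crossing comparison in (1) and a single integration by parts in (2).
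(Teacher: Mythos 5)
Your proposal is correct and follows essentially the same route as the paper: both parts reduce to the pointwise comparison $\Lambda_{\mu_1}\leq\Lambda_{\mu_2}$, part (1) via the single-crossing comparison against $r_0^\lambda$ combined with $\int_0^\infty G_{\mu_i}=1$ (the paper phrases this as monotonicity in $\lambda$ of $\int(G_{\mu_2}-G_{\mu_1})(z/r_0)^\lambda dz$, which is the same observation), and part (2) via the identical integration by parts with the boundary term for $\mu_1$ controlled by sandwiching between $0$ and the $\mu_2$ boundary term.
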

\begin{proof}
\begin{enumerate}
\item The first claim follows if we can show $\int_0^\infty (G_{\mu_2}(z)-G_{\mu_1}(z))(z/r_0)^\lambda dz\geq 0$ for all $\lambda\geq 0$.  This integral equals $0$ at $\lambda=0$, as both $G_{\mu_2}$ and $G_{\mu_1}$ integrate to $1$.  So we are done if we can show
\begin{align}\label{eq:inclusion1}
\lambda\to \int_0^\infty(G_{\mu_2}(z)-G_{\mu_1}(z))(z/r_0)^\lambda dz
\end{align}
is non-decreasing.  To see this, write
\begin{align}
\int_0^\infty(G_{\mu_2}(z)-G_{\mu_1}(z))z^\lambda dz=\int_{r_0}^\infty(G_{\mu_2}(z)-G_{\mu_1}(z))(z/r_0)^\lambda dz-\int_0^{r_0}(G_{\mu_1}(z)-G_{\mu_2}(z))(z/r_0)^\lambda dz.
\end{align}
\req{eq:U_inclusion_bound1} implies that both integrands are non-negative and $(z/r_0)^\lambda$ is increasing in $\lambda$ for $z> r_0$ and decreasing in $\lambda$ for $z\in(0,r_0)$.  \req{eq:inclusion1} then follows from these facts.
\item  We need to show that $\int_0^\infty G_{\mu_1}(z)z^\lambda dz\leq \int_0^\infty  G_{\mu_2}(z)z^\lambda dz$ for all $\lambda>0$.  This is trivial if $\lambda\not\in I_2$, so suppose $\lambda\in I_2$.  Integrating by parts and  using \req{eq:U_inclusion_decay} and \req{eq:U_inclusion_bound2} gives
\begin{align}
\int_0^\infty G_{\mu_2}(z)z^\lambda dz=&\lim_{R\to \infty} \left(-R^\lambda \int_R^\infty G_{\mu_2}(z)dz+\lambda\int_0^Rz^{\lambda-1}\int_z^\infty  G_{\mu_2}(r)dr dz\right)\\
=&\lim_{R\to \infty} \lambda\int_0^Rz^{\lambda-1}\left(1-\int_0^z  G_{\mu_2}(r)dr\right) dz\notag\\
\geq&\lim_{R\to \infty} \lambda\int_0^Rz^{\lambda-1}\left(1-\int_0^z  G_{\mu_1}(r)dr\right) dz\notag\\
=&\lim_{R\to \infty}\left( R^\lambda\int_R^\infty  G_{\mu_1}(z)dz+\int _0^R G_{\mu_1}(z) z^\lambda dz\right).\notag
\end{align}
\req{eq:U_inclusion_decay} together with \req{eq:U_inclusion_bound2} implies $\lim_{R\to\infty}R^\lambda\int_R^\infty  G_{\mu_1}(z)dz=0$ and so we are done.
\end{enumerate}
\end{proof}

Next, we have several useful criteria for determining if a particular $Q$ is a member of $\mathcal{U}^{\mu}(P)$.  These conditions are significantly more intuitive than either of  \req{eq:h_def} or \req{eq:ambiguity_Lambda_Def}, which is one reason why we view Definition \ref{def:tail_decay_ambiguity_set} as a preferred way to construct R{\'e}nyi-divergence ambiguity sets. 
\begin{theorem}\label{thm:tail_decay_ambiguity_set}
Let $\mu$, $G_\mu$, and $\Lambda_\mu$ be as in Definition \ref{def:tail_decay_ambiguity_set}:
\begin{enumerate}
\item If $Q\ll P$ and we have $r_0>0$ such that
\begin{align}\label{eq:P_r0_bounds}
P(dQ/dP\geq r)\geq G_\mu(r)\,\text{ for all $r\in[0,r_0]$  and }\, P(dQ/dP\geq r)\leq G_\mu(r) \,\text{ for all $r> r_0$}
\end{align}
then $Q\in \mathcal{U}^{\mu}(P)$.
\item Suppose $Q_0\in \mathcal{U}^{\mu}(P)$ and $Q\ll P$.  If  we have $r_0>0$ such that
\begin{align}& r_01_{dQ_0/dP\leq r_0}\geq\frac{dQ}{dP} 1_{dQ_0/dP\leq r_0}\geq \frac{dQ_0}{dP} 1_{dQ_0/dP\leq r_0}\,\,\,\, P-a.s.\,\,\,\text{ and }\\
&r_01_{dQ_0/dP\geq r_0}\leq \frac{dQ}{dP} 1_{dQ_0/dP\geq r_0}\leq \frac{dQ_0}{dP} 1_{dQ_0/dP\geq r_0}\,\,\,\, P-a.s.\notag
\end{align}
then $Q\in\mathcal{U}^{\mu}(P)$.
\item Suppose $Q_0\in \mathcal{U}^{\mu}(P)$ and $\nu$ is a sigma-finite positive measure with $dQ_0=q_0 d\nu$, $dP=pd\nu$, $dQ=q d\nu$. If we have $r_0>0$ such that $q$ is between $q_0$ and $r_0p$ pointwise $\nu$-a.s. then $Q\in\mathcal{U}^{\mu}(P)$.
\end{enumerate}

\end{theorem}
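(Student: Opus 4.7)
The plan is to reduce all three claims to a comparison of the integrals $\int_0^\infty G(z) z^\lambda dz$ appearing in Lemma \ref{lemma:tail}. For any $Q\ll P$, write $G_Q(r) = P(dQ/dP\geq r)$; by Lemma \ref{lemma:tail}, $G_Q$ has all the properties of the $G_\mu$'s in Definition \ref{def:tail_decay_ambiguity_set} (in particular $\int_0^\infty G_Q = 1$), and
\[
\Lambda_Q^{\log(dQ/dP)}(\lambda) = \log\!\Bigl((\lambda+1)\int_0^\infty G_Q(z)\, z^\lambda\, dz\Bigr).
\]
Consequently, $Q\in\mathcal{U}^\mu(P)$ is equivalent to $\int_0^\infty G_Q(z)z^\lambda\,dz \leq \int_0^\infty G_\mu(z)z^\lambda\,dz$ for every $\lambda\geq 0$, and the entire theorem becomes a set of statements about such integrals.

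For Part (1), both $G_Q$ and $G_\mu$ integrate to $1$, so the two sides agree at $\lambda=0$, and the hypothesis (\ref{eq:P_r0_bounds}) is exactly the sign condition (\ref{eq:U_inclusion_bound1}) with $(G_{\mu_1},G_{\mu_2})=(G_Q,G_\mu)$. The calculation already performed inside the proof of Lemma \ref{Lemma:U_inclusion}(1) -- split $\int_0^\infty (G_\mu-G_Q)(z/r_0)^\lambda\,dz$ at $r_0$, use that $(z/r_0)^\lambda$ is increasing in $\lambda$ on $(r_0,\infty)$ and decreasing on $(0,r_0)$, and conclude the difference is non-decreasing in $\lambda$ and therefore non-negative -- gives the desired integral comparison, and hence $Q\in\mathcal{U}^\mu(P)$.

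For Part (2), the first task is to turn the likelihood-ratio sandwich into a tail comparison between $G_Q$ and $G_{Q_0}$. A case analysis on the partition $\{dQ_0/dP\leq r_0\}\cup\{dQ_0/dP\geq r_0\}$ shows, $P$-a.s., that $\{dQ_0/dP\geq r\}\subset\{dQ/dP\geq r\}$ whenever $r\leq r_0$ (on the lower piece $dQ/dP\geq dQ_0/dP\geq r$, on the upper piece $dQ/dP\geq r_0\geq r$) and $\{dQ/dP\geq r\}\subset\{dQ_0/dP\geq r\}$ whenever $r>r_0$ (the lower piece contributes nothing because $dQ/dP\leq r_0<r$ there, on the upper piece $dQ_0/dP\geq dQ/dP\geq r$). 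Taking $P$-measures gives (\ref{eq:U_inclusion_bound1}) with $G_{\mu_1}=G_Q$, $G_{\mu_2}=G_{Q_0}$, so the Part (1) argument yields $\int G_Q z^\lambda\,dz\leq \int G_{Q_0}z^\lambda\,dz$; chaining this with $\int G_{Q_0}z^\lambda\,dz\leq \int G_\mu z^\lambda\,dz$ -- which is exactly what $Q_0\in\mathcal{U}^\mu(P)$ says -- completes Part (2).

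Part (3) will follow from Part (2) by reading the density sandwich as a likelihood-ratio sandwich on $\{p>0\}$: where $q_0\leq r_0 p$ the hypothesis gives $q_0\leq q\leq r_0 p$, i.e.\ $dQ_0/dP\leq dQ/dP\leq r_0$, and where $q_0\geq r_0 p$ it gives $r_0\leq dQ/dP\leq dQ_0/dP$. One also needs $Q\ll P$: on $\{p=0\}$, the hypothesis $Q_0\ll P$ forces $q_0=0$ $\nu$-a.s., and the sandwich with $r_0p=0$ then forces $q=0$ $\nu$-a.s., so $Q(\{p=0\})=0$. The main obstacle I anticipate is the bookkeeping in Part (2): being careful that all the likelihood-ratio inequalities and the resulting level-set inclusions hold $P$-a.s.\ rather than pointwise, and handling the boundary $dQ_0/dP=r_0$ consistently (it may be placed in either half of the partition, since both conditions force $dQ/dP=r_0$ there). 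Once those inclusions are clean, all three parts reduce to the monotonicity-in-$\lambda$ calculation already inside Lemma \ref{Lemma:U_inclusion}(1).
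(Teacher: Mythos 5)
Your proposal is correct and follows essentially the same route as the paper: all three parts are reduced, via the formula of Lemma \ref{lemma:tail}, to the split-at-$r_0$ monotonicity-in-$\lambda$ computation from the proof of Lemma \ref{Lemma:U_inclusion}(1), with part (2) obtained from the level-set inclusions you describe and part (3) reduced to part (2). The only cosmetic difference is that the paper phrases part (1) by introducing $\widetilde\mu$ (the distribution of $dQ/dP$ under $P$) and citing Lemmas \ref{Lemma:KL_bound} and \ref{Lemma:U_inclusion}, whereas you inline the identical calculation.
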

\begin{remark}
Intuitively, property (1) states that any $Q$ whose likelihood decays faster than $G_\mu$ is in $\mathcal{U}^\mu(P)$.   The intuitive meaning of property  (3) is immediate; see Figure \ref{fig:U_examples}.
\end{remark}
\begin{proof}
\begin{enumerate}
\item  Let $\widetilde{\mu}$ be the distribution of $dQ/dP$ under $P$. The second bound in \req{eq:P_r0_bounds} implies that $\Lambda_{\tilde\mu}$ is finite in a neighborhood of $0$. From part (1) of Lemma \ref{Lemma:U_inclusion} we see that $\mathcal{U}^{\widetilde{\mu}}(P)\subset\mathcal{U}^\mu(P)$, and from Lemma \ref{Lemma:KL_bound} we see that $Q\in \mathcal{U}^{\widetilde{\mu}}(P)$.
\item By set inclusions, it is straightforward to see that the assumptions imply  $P(dQ/dP\geq r)\geq P(dQ_0/dP\geq r)$ for all $r\in[0,r_0]$ and $P(dQ/dP\geq r)\leq P(dQ_0/dP\geq r)$ for all $r>r_0$. Similarly to the proof of Lemma \ref{Lemma:U_inclusion},  the formula \req{eq:G_MGF_bound} then implies $\Lambda_Q^{\log(dQ/dP)}(\lambda)\leq \Lambda_{Q_0}^{\log(dQ_0/dP)}(\lambda)$ for all $\lambda>0$. This proves the claim.
\item The assumptions imply that $Q\ll P$ and $dQ/dP=\frac{q}{p} 1_{p> 0}$ and $dQ_0/dP=\frac{q_0}{p} 1_{p>0}$.  The result then follows from part (2).
\end{enumerate}
\end{proof}

Figure \ref{fig:U_examples} illustrates property (3) from Theorem \ref{thm:tail_decay_ambiguity_set}: Given a baseline model $P$, with density shown given by the black solid curve, and an alternative model $Q_0\in\mathcal{U}^\mu(P)$, with density given by the blue dashed curve, then  any other probability measure $Q$ whose densities lies in either of the gray regions (for example, either of the red dashed curves) is also in $\mathcal{U}^\mu(P)$. Since $Q_0$ (perhaps being constructed via Theorem \ref{Thm:construct_Q}) can has a different (slower) decay than $P$, so also can $Q$. This is in stark contrast to the classical examples from Section \ref{sec:classical_MGF_ex}.

\begin{figure}[h]
\minipage{0.5\textwidth}
  \includegraphics[width=\linewidth]{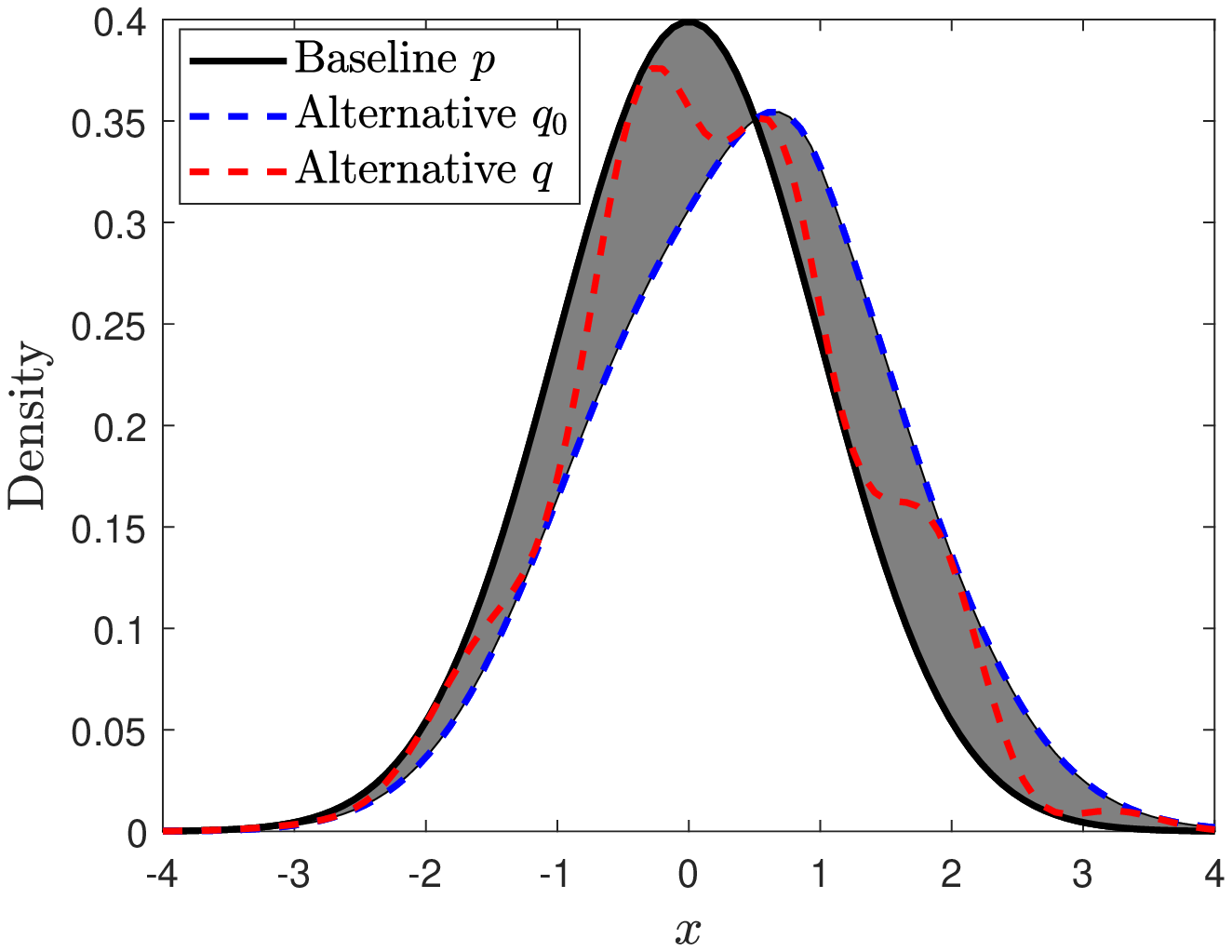}
\endminipage\hfill
\minipage{0.5\textwidth}%
  \includegraphics[width=\linewidth]{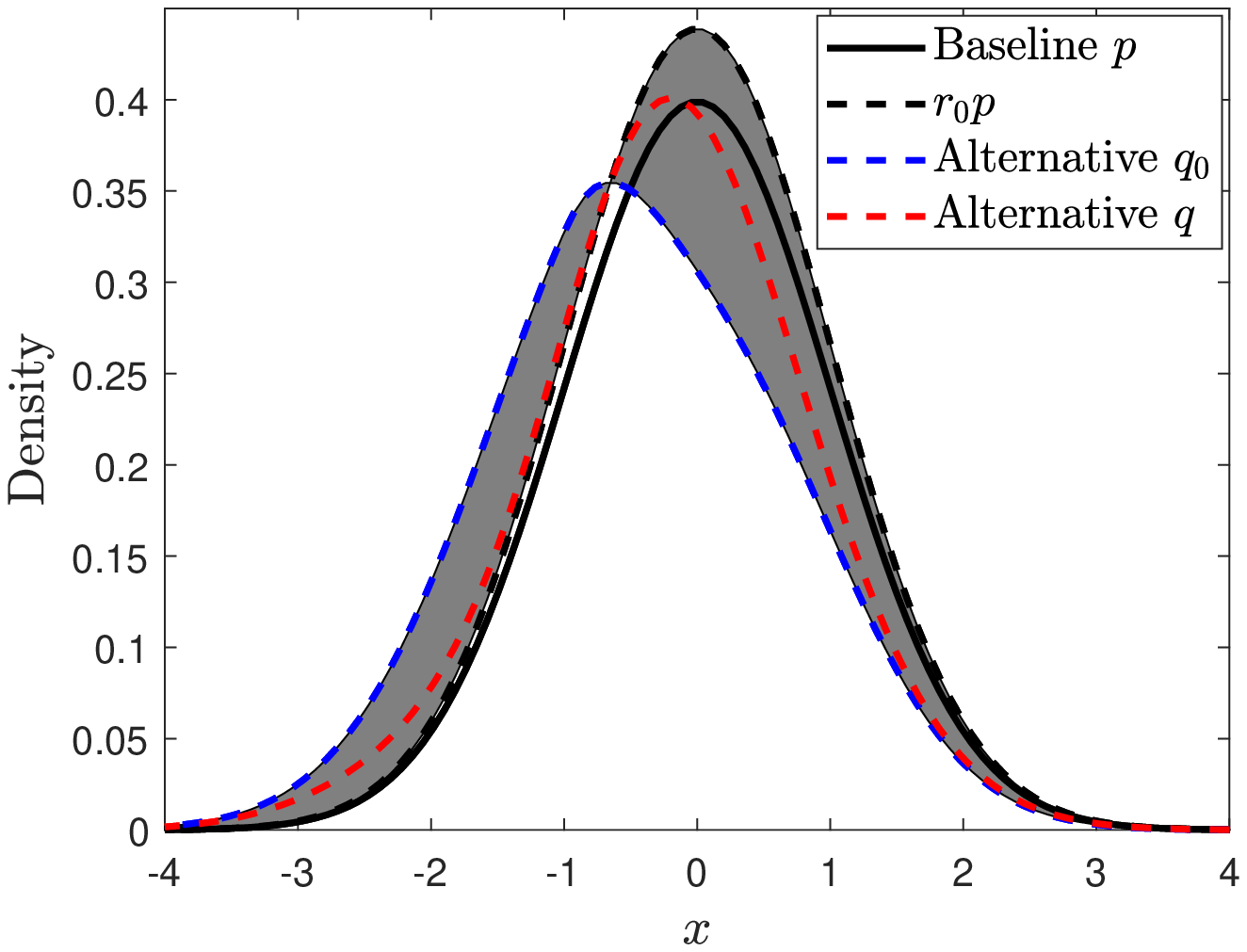}
\endminipage
\caption{If $Q_0\in\mathcal{U}^{\mu}(P)$  and  $P$ and $Q_0$ have densities $p$ (black solid curve) and $q_0$ (blue dashed curve) respectively, then, for any $r_0>0$, any probability measure whose density lies between $r_0p$ (black dashed curve) and $q_0$ (red dashed curve) is also in $\mathcal{U}^\mu(P)$. The left plot is for $r_0=1$ and the right for $r_0=1.1$. Note that the R{\'e}nyi-divergences only depend on the decay (in probability) of the likelihood-ratio: $P(dQ/dP\geq r)$. In particular, they cannot tell {\em which} tail of $P$ is perturbed; reflecting either plot around the $y$ axis doesn't change any of the R{\'e}nyi divergences. } \label{fig:U_examples}
\end{figure}

 Note that R{\'e}nyi divergences can only access information on  $P(dQ/dP\geq r)$ (see Lemma \ref{lemma:tail}); in particular, they cannot distinguish  {\em which} tail is perturbed.  For instance, reflecting either plot from Figure \ref{fig:U_examples}  about the $y$-axis doesn't change the corresponding R{\'e}nyi divergences.

Finally, it is also possible to construct ambiguity sets from two-sided bounds on the deviation of the log-likelihood from its mean.  This method appears both less fundamental and less useful, so we relegate discussion of it to \ref{sec:2_sided_tail}.

\section{Examples}\label{sec:U_examples}
Here we provide several examples that illustrate the ambiguity-set definition (\ref{eq:U_mu_def}) as well as the method of constructing a member of $\mathcal{U}^\mu(P)$ that was developed in Theorem \ref{Thm:construct_Q}. More substantial applications of our methods can be found in Section \ref{sec:applications}.
\subsection{Constructing Ambiguity Sets from Tail Behavior}\label{sec:ex_ambiguity_set}
Each choice of $\mu$ in Theorem \ref{thm:tail_decay_ambiguity_set} represents a different type of perturbation of $P$; intuitively, a slower decay in $r$ implies a more extreme perturbation to tail-probabilities and hence to risk-sensitive quantities. We provide several classes of probability measures, $\mu$, on $[0,\infty)$ with mean $1$ that can be used in Definition 
\ref{def:tail_decay_ambiguity_set} to capture various important forms of decay behavior.  Each of these families contain a parameter that can be fixed by specifying a relative-entropy bound via Lemma \ref{Lemma:KL_bound}.

As motivation, we note that the first of the  families introduced below will have the following effect when applied to the battery example discussed in Section \ref{sec:motivation} (see Section \ref{sec:Battery} below for details): the resulting ambiguity sets will contain perturbations of the base model, $P$,  with  power-law decay at $0$ of a {\em slower} decay rate than $P$.  UQ bounds over one of these ambiguity sets will then constitute a stress-test of the model under this form of  decay-rate perturbation; again, a particular member of the family will be singled out via a relative-entropy bound.

\begin{enumerate}
\item Power-law decay:  Let $0< r_0<1$ and consider the  family of Pareto distributions
\begin{align}
d\mu_{r_0}=\frac{\eta-1}{r_0}(x/r_0)^{-\eta}1_{x\geq r_0}dx,\,\,\,\eta\equiv (2-r_0)/(1-r_0)
\end{align}
 (note that $r_0=(\eta-2)/(\eta-1)$ and $\eta>2$).  The corresponding $G_{r_0}(r)\equiv\mu_{r_0}([r,\infty))$ and its inverse are
\begin{align}\label{eq:G_power_law}
G_{r_0}(r)=&1_{r\leq r_0}+(r/r_0)^{-(\eta-1)}1_{r>r_0},\\
G_{r_0}^{-1}(y)=& r_0y^{-1/(\eta-1)}
\end{align}
(recall that $G_{r_0}^{-1}$ denotes the inverse on $[r_0,\infty)$; see Corollary \ref{corr:construct_Q}). We also have
\begin{align}\label{eq:power_law_lambda}
\Lambda_{{r_0}}(\lambda)=&\log\left((\lambda+1)\int_0^\infty G_{r_0}(z)z^\lambda dz\right)\\
=&\lambda\log(r_0)+\log\left(1+\frac{\lambda}{r_0/(1-r_0)-\lambda}\right),\,\,\,-1<\lambda<r_0/(1-r_0).\notag
\end{align}
We denote the resulting ambiguity sets by $\mathcal{U}^{PL}_{r_0}(P)$. Finally, we can relate $r_0$ to the relative entropy bound by computing the derivative at $0$:
\begin{align}\label{eq:R_x0}
R(Q\|P)\leq \log(r_0)+1/r_0-1,\,\,\,Q\in\mathcal{U}^{PL}_{r_0}(P).
\end{align}
%Note that \req{eq:R_x0} is decreasing in $r_0\in(0,1)$, so there is a unique $r_0$ corresponding to any $R(Q\|P)$.
%range is $(-\infty,0)$

\item Sub/Super-exponential decay: Define  $\mathcal{U}^{Exp}_{r_0}(P)$ using
\begin{align}
d\mu_{r_0}=&\kappa\eta (x-r_0)^{\kappa-1}\exp\left(-\eta(x-r_0)^\kappa\right)1_{x\geq r_0}dx,\,\,\,\,0\leq r_0<1,\,\,\,\kappa>0,\,\,\,\eta=\left(\frac{\Gamma(1+1/\kappa)}{1-r_0}\right)^\kappa,\label{eq:exp_mu}\\
G_{r_0}(r)=&1_{r\leq r_0}+e^{-\eta(r-r_0)^\kappa}1_{r>r_0},\label{eq:G_sub_exp}\\
G^{-1}_{r_0}(y)=&r_0+(\eta^{-1}\log(1/y))^{1/\kappa},\\
\Lambda_{r_0}(\lambda)=&\log\left(r_0^{\lambda+1}+(\lambda+1)\int_{r_0}^\infty \exp\left(-\eta(z-r_0)^\kappa\right)z^\lambda dz\right),\,\,\,\lambda\geq 0,\label{eq:Lambda_sub_exp}\\ %actually, holds for \lambda>-1.
R(Q\|P)\leq & 1+r_0\log(r_0)-r_0+ \int_{r_0}^{\infty} \exp\left(-\eta(z-r_0)^\kappa\right)\log(z) dz,\,\,\,Q\in\mathcal{U}^{Exp}_{r_0}(P).\label{eq:R_x0_2}
\end{align}
For a given value of $\kappa$, $r_0$ can be fixed by specifying a desired relative-entropy bound. Note that \req{eq:exp_mu} is a family of shifted Weibull distributions. We will refer to the case $\kappa=1$  as exponential  decay.

\item Perturbation of a Gaussian:  Let $P_2= N(\mu,\sigma_2^2)$, $P_1= N(\mu,\sigma_1^2)$ be two normal distributions on $\mathbb{R}$ with $\sigma_2>\sigma_1>0$.  Define $r_0=\sigma_1/\sigma_2$ and let $\mu_{r_0}$ be the distribution of $dP_2/dP_1$ under $P_1$. Denote the resulting ambiguity set by $\mathcal{U}^{G}_{r_0}(P)$. We have:
\begin{align}
G_{r_0}(r)=&1_{r\leq r_0}+\erfc\left(C(r)\right)1_{r>r_0},\,\,\,\,r_0\equiv \sigma_1/\sigma_2,\,\,\,\,C(r)\equiv\left(1-r_0^2\right)^{-1/2}\log\left(r/r_0\right)^{1/2},\label{eq:G_gaussian}\\
G_{r_0}^{-1}(y)=&r_0\exp\left((1-r_0^2)(\erfc^{-1}(y))^2\right),\\
\Lambda_{r_0}(\lambda)=&\lambda(\lambda+1)R_{\lambda+1}(P_2\|P_1)=\lambda\log(r_0)-\frac{1}{2}\log(1-\lambda(r_0^{-2}-1)),\,\,\,0\leq \lambda <(r_0^{-2}-1)^{-1},\\
R(Q\|P)\leq& R(P_2\|P_1)= \log(r_0)+ \frac{1}{2}\left(r_0^{-2}-1\right),\,\,\,Q\in\mathcal{U}^G_{r_0}(P).\label{eq:R_x0_3}
\end{align}
As our notation suggests, the ambiguity set is determined by the value of the single parameter $r_0=\sigma_1/\sigma_2\in(0,1)$, which can again be fixed by specifying a relative-entropy bound. We will refer to ambiguity sets constructed in this manner as Gaussian ambiguity sets. 
\begin{remark}\label{remark:G_PL_similar}
The Gaussian ambiguity sets are similar to the power-law decay family. This can be seen by approximating $\erfc(x)\approx e^{-x^2}/(\sqrt{\pi}x)$, which implies that $G_\mu(r)\approx Cr^{-\alpha}/\sqrt{\log(r/r_0)}$ for some $C,\alpha>0$.  In practice, we find the densities constructed via Theorem \ref{Thm:construct_Q} using a Gaussian ambiguity set to be smoother than those constructed via a power-law ambiguity set (\ref{eq:G_power_law}), and so the Gaussian case can be viewed as a reasonable replacement for (\ref{eq:G_power_law}) when this smoothness is desired.
\end{remark}
\end{enumerate}

\begin{figure}[h]
\minipage{0.5\textwidth}
  \includegraphics[width=\linewidth]{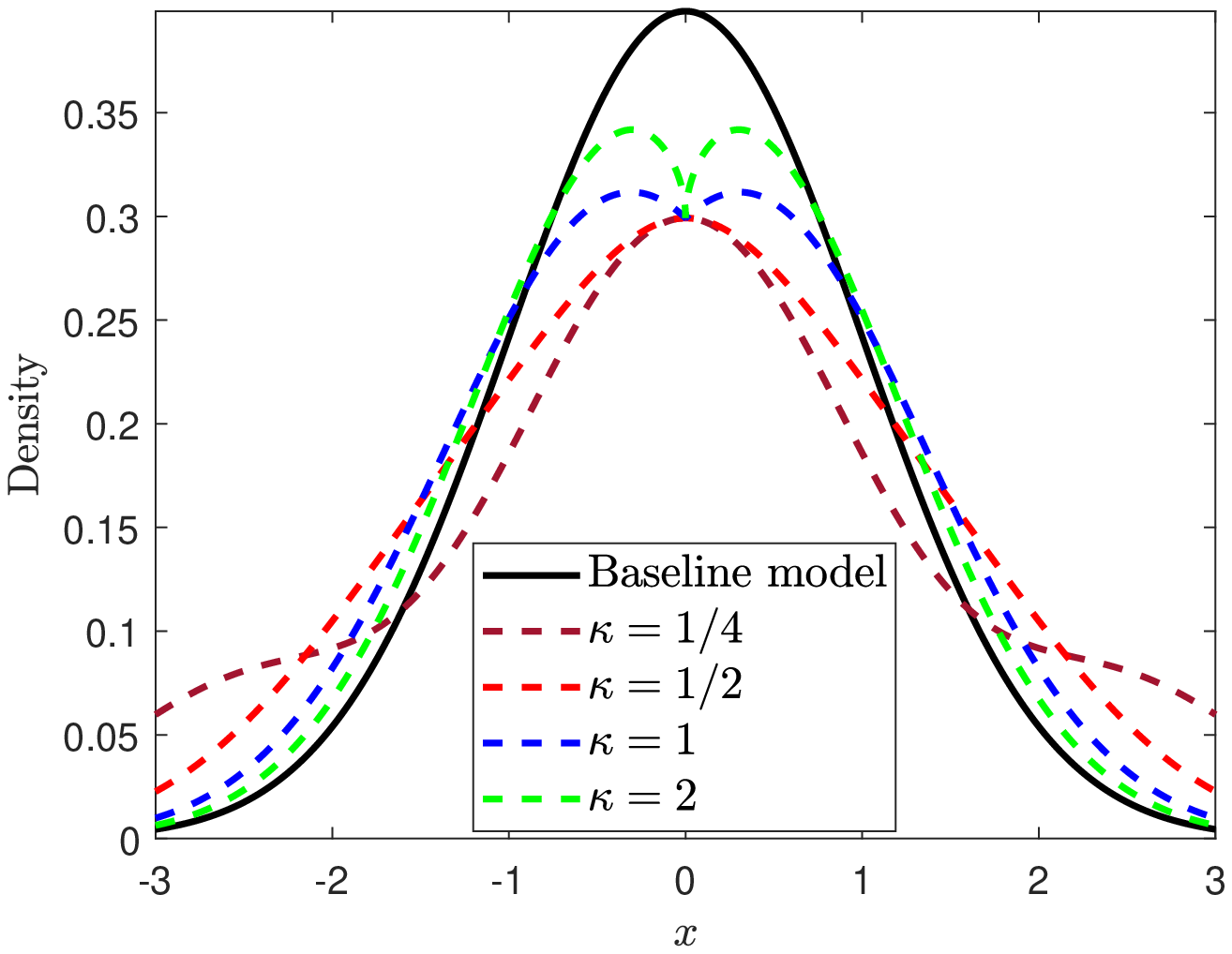}
\endminipage\hfill
\minipage{0.5\textwidth}%
  \includegraphics[width=\linewidth]{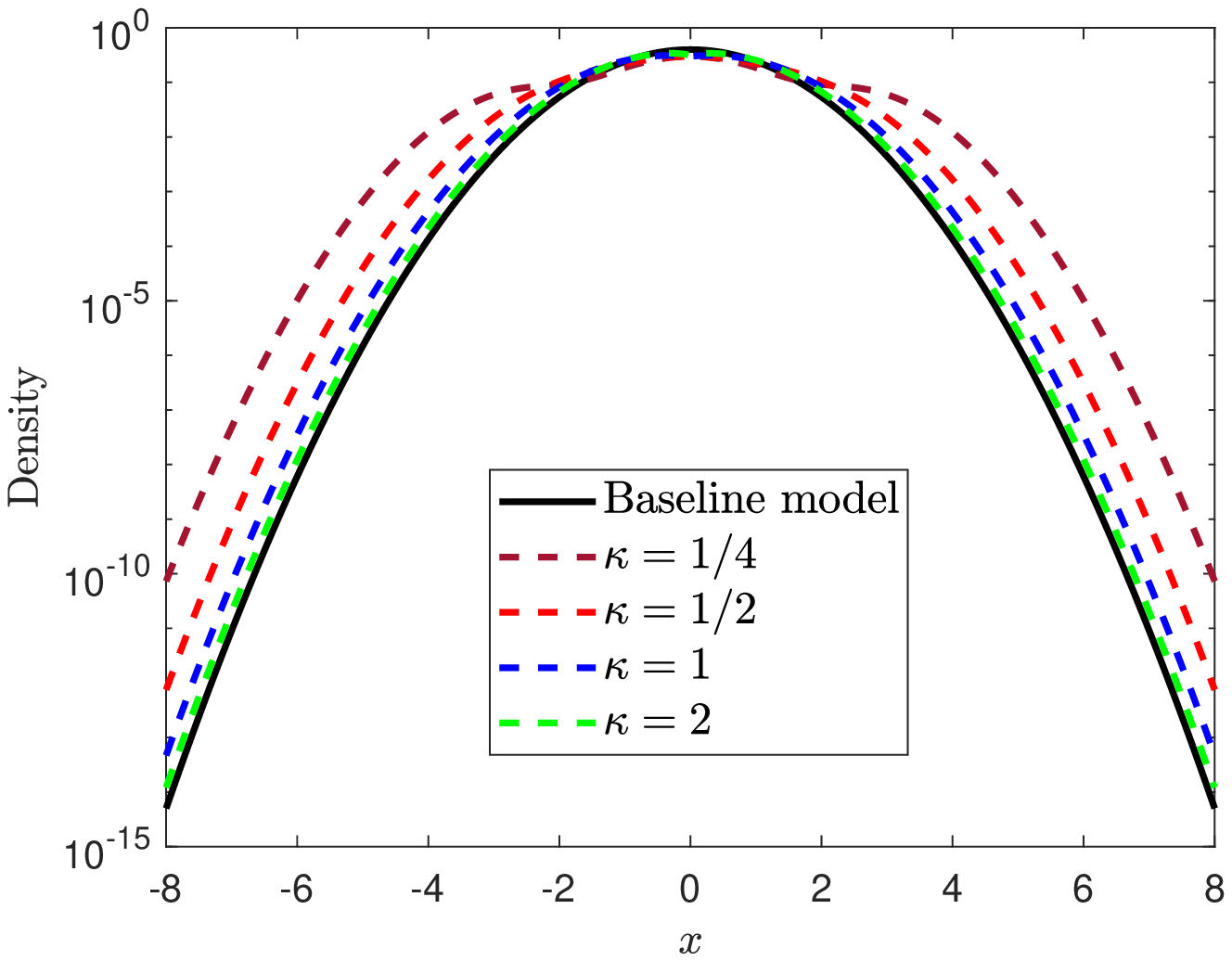}
\endminipage
\caption{ Alternative models that saturate the MGF bound (i.e., that satisfy \req{eq:saturate_bound}) for the sub/super-exponential ambiguity sets with $r_0=3/4$ and $\kappa=1/2,1,2$; see \req{eq:Q_gaussian}.  The alternative models (densities, $q$, shown in dashed lines) are constructed using the procedure from Theorem \ref{Thm:construct_Q}.  The baseline model (density, $p$, given  by the black solid line) is a Gaussian with mean $0$ and variance $1$. The value of $r_0$ gives the minimum of $q/p$ and this always occurs at the maximum of $p$.  Each ambiguity set contains all probability measures whose densities are between $p$ and the corresponding $q$ (see Theorem \ref{thm:tail_decay_ambiguity_set} for this, and other such criteria).  } \label{fig:normal_perturb}
\end{figure}

\subsection{Constructing a Member of $\mathcal{U}^\mu(P)$}
We now give several examples where the construction from Theorem \ref{Thm:construct_Q}  of an alternative measure $Q$ that  saturates the defining MGF bound of $\mathcal{U}^\mu(P)$ can be carried out explicitly; we use the ambiguity sets from Section \ref{sec:ex_ambiguity_set}. Recall that each of these involves a parameter, $r_0$, that can be set by imposing a desired relative-entropy bound; see \req{eq:R_x0}, \req{eq:R_x0_2}, and \req{eq:R_x0_3}.
\begin{enumerate}
\item Let $\lambda>0$ and $P(dt)=\lambda e^{-\lambda t}dt$ on $[0,\infty)$. Then, letting $p(t)=\lambda e^{-\lambda t}$ be the density with respect to Lebesgue measure, we have
\begin{align}
\psi(y)=P(p\leq y)=\min\{y/\lambda,1\},\,\,\,\,\,\psi(p(t))=e^{-\lambda t}.
\end{align}
 Imposing power-law decay of the likelihood-ratio, with some choice of $0<r_0<1$ (see example 1 in Section \ref{sec:ex_ambiguity_set}),  results in
\begin{align}
&Q(dt)=\phi(p(t))P(dt),\,\,\,\,\,\phi(p(t))=G_{r_0}^{-1}(\psi(p(t)))=r_0\exp\left(\frac{\lambda}{\eta-1}t\right),\,\,\,\eta=(2-r_0)/(1-r_0),
\end{align}
i.e., $Q$ is exponentially distributed with the slower rate $r_0\lambda$.
\item Again, let  $P(dt)=\lambda e^{-\lambda t}dt$.  This time we impose sub/super-exponential decay of the likelihood ratio for some choice of $r_0\in[0,1)$ and $\kappa>0$  (see example 2 in Section \ref{sec:ex_ambiguity_set}), and  find
\begin{align}
&Q(dt)=\phi(p(t))P(dt),\,\,\,\,\,\phi(p(t))=r_0+\left(\lambda t/\eta\right)^{1/\kappa},\,\,\,\,\eta=\left(\frac{\Gamma(1+1/\kappa)}{1-r_0}\right)^\kappa,
\end{align}
i.e., the tail of $dQ/dt$ is a power-law perturbation of $dP/dt$.
\item Let $P=N(\mu,\sigma^2)$ be a normal distribution on $\mathbb{R}$ with $\sigma>0$. Then, letting $p$ be the density with respect to Lebesgue measure, we have
\begin{align}
\psi(y)=&\erfc\left(\log\left(\frac{1}{(2\pi\sigma^2)^{1/2}y}\right)^{1/2}\right) 1_{y\leq (2\pi\sigma^2)^{-1/2}}+1_{y> (2\pi\sigma^2)^{-1/2}},\,\,\,\,\psi(p)=\erfc\left(\frac{|x-\mu|}{\sqrt{2}\sigma}\right).
\end{align}
Imposing the sub/super exponential decay \req{eq:G_sub_exp} for some $r_0\in[0,1)$ and $\kappa>0$ we have
\begin{align}\label{eq:Q_gaussian}
Q(dx)=&\phi(p(x))P(dx),\,\,\,\,\,\phi(p(x))=r_0+\left(\eta^{-1}\log\left(1/\erfc\left(\frac{|x-\mu|}{\sqrt{2}\sigma}\right)\right)\right)^{1/\kappa},\,\,\,\eta=\left(\frac{\Gamma(1+1/\kappa)}{1-r_0}\right)^\kappa.
\end{align}
Several different $Q$'s obtained in this manner are shown in Figure \ref{fig:normal_perturb}.
\end{enumerate}
Again, we emphasize that once one has a single member of $\mathcal{U}^\mu(P)$, other members can be obtained by using Theorem \ref{thm:tail_decay_ambiguity_set} (see Figure \ref{fig:U_examples} for illustrative examples).

\begin{remark}
Note that we have named the general classes of ambiguity sets constructed via \req{eq:power_law_lambda} or \req{eq:Lambda_sub_exp} after the decay of the likelihood ratio  under $P$: $P(dQ/dP\geq r)$.  For any particular instance of one of these constructions, the tail behavior  with respect to (for example) Lebesgue measure ($dQ/dx$ as compared to $dP/dx$) will be highly dependent on the tail behavior of the chosen baseline model $P$; it will often be quite different from the decay behavior of $P(dQ/dP\geq r)$ (as was illustrated by the above examples) and  must be investigated on a case-by-case basis.
\end{remark}

\section{Uncertainty Quantification for Risk-Sensitive QoIs}\label{sec:Renyi_UQ}
Having explored  the structure of R{\'e}nyi-divergence ambiguity sets, we now move towards the application of these ideas to uncertainty quantification for risk-sensitive QoIs.  The techniques we develop here will apply to non-negative random variables, $\tau$, and from this point on we will formally call any quantity of the form $\log(E_P[\tau])$ (or simply $\tau$ by itself) a risk-sensitive QoI ($E_P$ denotes the expectation under $P$), though our results are most relevant for those $\tau$'s that are risk-sensitive in the intuitive sense (i.e., sensitive to the tail(s) of $P$), e.g., rare-events, partition functions, moment generating functions, exit times, or  random variables whose MGF is not finite on any neighborhood of $0$ (i.e., heavy tailed).

 In this section we discuss the general theory of R{\'e}nyi-divergence-based UQ, both summarizing the relevant established literature as well as proving some new additions.   In particular, we derive a  Donsker-Varadhan variational formula for R{\'e}nyi divergences, a generalized version of the UQ bound \req{prop:UQ1}, and prove several new tightness and divergence properties. Applications and examples will be given in Section \ref{sec:applications}.

\subsection{Variational Principles and R{\'e}nyi Divergences}

 UQ bounds for non-risk-sensitive QoIs can be derived from variational formulas involving relative entropy  \cite{dupuis2011uq,BREUER20131552,glasserman2014,lam:robustsens,GKRW,PK2013,dupuis2015path,doi:10.1137/15M1047271,KRW,2018arXiv181205174B}. For motivation, we recall the relevant variational formulas:
\begin{enumerate}
\item The Donsker-Varadhan variational formula:
\begin{proposition}\label{prop:DV_var}
Let  $P,Q$ be probability measures on $(\Omega,\mathcal{M})$.  Then
\begin{align}\label{eq:DV_var}
R(Q\|P)=\sup_{g\in B({\Omega})}\left\{E_Q[g]-\log\left[\int e^g dP\right]\right\},
\end{align}
where $B({\Omega})$ denotes the set of bounded  measurable real-valued functions on $(\Omega,\mathcal{M})$.
\end{proposition}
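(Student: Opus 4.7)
The plan is the classical two-step proof via a tilted reference measure. First dispose of the case $Q\not\ll P$: pick $A$ with $Q(A)>0$ and $P(A)=0$, take the bounded test function $g_n=n\,1_A\in B(\Omega)$, and observe $E_Q[g_n]-\log\int e^{g_n}dP=nQ(A)-\log(1)\to+\infty=R(Q\|P)$, so both sides of \req{eq:DV_var} are $+\infty$. Assume henceforth $Q\ll P$.

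For the upper bound, fix bounded measurable $g$ and introduce the tilted probability measure $P_g$ by $dP_g/dP=e^g/Z_g$ with $Z_g=\int e^g dP\in(0,\infty)$ (finite because $g$ is bounded). Writing $\log(dQ/dP)=\log(dQ/dP_g)+g-\log Z_g$ and taking expectation under $Q$ gives the identity
\begin{align*}
R(Q\|P)=R(Q\|P_g)+E_Q[g]-\log Z_g.
\end{align*}
Gibbs' inequality $R(Q\|P_g)\geq 0$ (a special case of \req{eq:Renyi_div_property} at $\alpha=1$, or Jensen applied to $-\log$) yields $E_Q[g]-\log\int e^g dP\leq R(Q\|P)$, so taking the supremum over $g\in B(\Omega)$ gives $\sup\leq R(Q\|P)$.

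For the reverse inequality the formal optimizer is $g^\star=\log(dQ/dP)$ on $\{dQ/dP>0\}$, which makes $R(Q\|P_{g^\star})=0$ and achieves equality; since $g^\star$ need not be bounded, I would truncate. Set $g_n=(g^\star\wedge n)\vee(-n)$ on $\{dQ/dP>0\}$ and $g_n=-n$ elsewhere, so $g_n\in B(\Omega)$. Decomposing $\log(dQ/dP)=(\log(dQ/dP))^+-(\log(dQ/dP))^-$ and noting that the negative part is always $Q$-integrable (since $(\log x)^-\leq 1/x$ for $x>0$, and $E_Q[1/(dQ/dP)\,1_{dQ/dP>0}]\leq 1$), monotone convergence on the positive part and dominated convergence on the negative part give $E_Q[g_n]\to R(Q\|P)$ (whether this limit is finite or $+\infty$). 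Simultaneously, on $\{dQ/dP>0\}$ we have $0\leq e^{g_n}\leq dQ/dP\vee 1$ (a $P$-integrable dominator since $Q\ll P$), and $e^{g_n}\to (dQ/dP)\,1_{\{dQ/dP>0\}}$ pointwise, while on $\{dQ/dP=0\}$, $e^{g_n}=e^{-n}\to 0$. Dominated convergence yields $\int e^{g_n}dP\to Q(dQ/dP>0)=1$, hence $\log\int e^{g_n}dP\to 0$. Combining, $E_Q[g_n]-\log\int e^{g_n}dP\to R(Q\|P)$, proving $\sup\geq R(Q\|P)$.

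The main obstacle I anticipate is the careful bookkeeping around truncation and the set $\{dQ/dP=0\}$: that set has $Q$-measure zero but possibly positive $P$-measure, so it contributes to $\int e^{g_n}dP$ and one must ensure the extension of $g_n$ there does not spoil either the pointwise limit or the domination. Once the tilting identity and this truncation are in hand, the rest is a routine application of the standard convergence theorems.
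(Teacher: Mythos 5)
Your proof is correct. A point of comparison: the paper does not actually prove Proposition \ref{prop:DV_var} — it is recalled as a classical fact — and the nearest in-paper analogue is the proof of the R\'enyi generalization, Theorem \ref{thm:gen_DV}, in \ref{app:gen_DV}. Your attainment direction uses essentially the same device as that appendix: truncate the formal optimizer $\log(dQ/dP)$ from above and below and pass to the limit by monotone and dominated convergence (the paper's $g_{n,m}=\log(f_{n,m}(dQ/dP))$ plays exactly the role of your $g_n$, including the separate handling of $\{dQ/dP=0\}$, which is $Q$-null but contributes $e^{-n}P(dQ/dP=0)\to 0$ to the normalizing integral). For the easy inequality the paper instead invokes the quoted duality formula \req{eq:Renyi_MGF_sup} from Proposition \ref{prop:duality}, whereas you give a self-contained derivation via the tilted measure $P_g$ and the nonnegativity $R(Q\|P_g)\ge 0$; at $\alpha=1$ these amount to the same Jensen-type estimate, so your route has the advantage of not depending on an externally cited result. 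The analytic details all check out: the domination $e^{g_n}\le\max(dQ/dP,1)$ on $\{dQ/dP>0\}$, the $Q$-integrability of $(\log(dQ/dP))^-$ via $(\log x)^-\le 1/x$ (which guarantees $E_Q[\log(dQ/dP)]$ is well defined in $(-\infty,+\infty]$ and that your limit argument works whether $R(Q\|P)$ is finite or not), and the disposal of the case $Q\not\ll P$ with $g_n=n1_A$.
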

\item Its dual relation, the Gibbs variational principle:
\begin{proposition}
Let  $P$ be a probability measure on $(\Omega,\mathcal{M})$ and $g\in B({\Omega})$. Then
\begin{align}\label{eq:Gibbs}
\log\left[\int e^g dP\right]=\sup_{Q\in\mathcal{P}(\Omega)}\{E_Q[g]-R(Q\|P)\},
\end{align}
where $\mathcal{P}(\Omega)$ denotes the set of probability measures on $(\Omega,\mathcal{M})$.
\end{proposition}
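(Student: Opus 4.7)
The plan is to invoke the preceding Donsker-Varadhan variational formula for the upper inequality and exhibit an explicit maximizer for the lower inequality, thereby also showing the supremum is attained.

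For the $\leq$ direction, fix any $Q \in \mathcal{P}(\Omega)$. By the Donsker-Varadhan formula (Proposition \ref{prop:DV_var}) applied to the given $g\in B(\Omega)$,
\[
R(Q\|P) \geq E_Q[g] - \log\!\int e^g\, dP,
\]
so rearranging gives $E_Q[g] - R(Q\|P) \leq \log\!\int e^g\, dP$. Taking the supremum over $Q \in \mathcal{P}(\Omega)$ yields the upper bound.

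For the $\geq$ direction, I would construct the Gibbs measure $Q^\ast$ by $dQ^\ast/dP = e^g/Z$, where $Z \equiv \int e^g\, dP$. Since $g$ is bounded and measurable, $Z\in(0,\infty)$ and $Q^\ast$ is a well-defined probability measure with $Q^\ast\ll P$. A direct computation then yields
\[
R(Q^\ast\|P) = E_{Q^\ast}\!\left[\log\!\left(\frac{e^g}{Z}\right)\right] = E_{Q^\ast}[g] - \log Z,
\]
so that $E_{Q^\ast}[g] - R(Q^\ast\|P) = \log Z$, establishing the lower bound and simultaneously identifying $Q^\ast$ as a maximizer.

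There is no substantial obstacle, since the only nontrivial analytic step (Jensen's inequality applied to $-\log$) is already encapsulated in Proposition \ref{prop:DV_var}, and all that remains is verifying the algebraic identity for $R(Q^\ast\|P)$. The value of the Gibbs construction goes beyond this short proof: the maximizer $Q^\ast$ is precisely the tilted measure that saturates the risk-sensitive UQ bound in Proposition \ref{prop:UQ1}, and similar Gibbs-type tilts reappear in the later tightness results mentioned in the outline.
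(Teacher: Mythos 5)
Your proof is correct. The paper states the Gibbs variational principle without proof, quoting it as a classical result dual to the Donsker--Varadhan formula; your argument (the $\leq$ direction from Proposition \ref{prop:DV_var}, the $\geq$ direction from the explicit tilt $dQ^\ast=e^g\,dP/Z$, which is well defined and has finite relative entropy because $g$ is bounded) is the standard derivation and is consistent with the tilting constructions the paper uses elsewhere, e.g.\ the equality case of Proposition \ref{prop:UQ1}.
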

\end{enumerate}

The variational principles \req{eq:DV_var} and \req{eq:Gibbs} have counterparts that involve R{\'e}nyi divergences.  A R{\'e}nyi-based variant of \req{eq:Gibbs} was derived in  \cite{atar2015robust}:
\begin{proposition}
\label{prop:duality}
Let  $P$ be a probability measure on $(\Omega,\mathcal{M})$, $g\in B({\Omega})$ and $\beta,\gamma\in\mathbb{R}\setminus\{0\}$ with $\beta<\gamma$.  Then
\begin{align}\label{eq:Renyi_MGF_inf}
\frac{1}{\beta}\log\left[\int e^{\beta g} dP\right]=\inf_{Q\in\mathcal{P}(\Omega)}\left\{\frac{1}{\gamma} \log\left[\int e^{\gamma g}dQ \right]+\frac{1}{\gamma-\beta}R_{\gamma/(\gamma-\beta)}(P\|Q)\right\}
\end{align}
and
\begin{align}\label{eq:Renyi_MGF_sup}
\frac{1}{\gamma}\log\left[\int e^{\gamma g} dP\right]=\sup_{Q\in\mathcal{P}(\Omega)}\left\{\frac{1}{\beta}\log\left[\int e^{\beta g}dQ \right]-\frac{1}{\gamma-\beta}R_{\gamma/(\gamma-\beta)}(Q\|P)\right\}.
\end{align}
\end{proposition}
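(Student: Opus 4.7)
The plan is to reduce both identities to a single pointwise bound together with a tightness check. First I would observe that \req{eq:Renyi_MGF_inf} and \req{eq:Renyi_MGF_sup} are equivalent by relabeling: the pointwise inequality
\[
\frac{1}{\beta}\log\!\int e^{\beta g}\,dQ \;\le\; \frac{1}{\gamma}\log\!\int e^{\gamma g}\,dP + \frac{1}{\gamma-\beta}R_{\gamma/(\gamma-\beta)}(Q\|P),
\]
valid for every $Q\in\mathcal{P}(\Omega)$, rearranges to the ``$\le$'' direction of \req{eq:Renyi_MGF_sup}; and swapping the symbolic roles of $P$ and $Q$ (with $Q$ then fixed and the ``free'' measure renamed) gives the ``$\ge$'' direction of \req{eq:Renyi_MGF_inf}. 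So it suffices to prove the displayed inequality and to exhibit a measure that saturates it.

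Next I would establish this inequality via H\"older. Assuming $Q\ll P$ (otherwise $R_{\gamma/(\gamma-\beta)}(Q\|P)=+\infty$ when $\gamma/(\gamma-\beta)>1$ and the bound is trivial), write $\int e^{\beta g}dQ = \int e^{\beta g}(dQ/dP)\,dP$ and apply H\"older with conjugate exponents $p=\gamma/\beta$ and $q=\gamma/(\gamma-\beta)$. In the regime $0<\beta<\gamma$ both exponents exceed $1$ and classical H\"older gives
\[
\int e^{\beta g}\,dQ \;\le\; \Bigl(\int e^{\gamma g}\,dP\Bigr)^{\beta/\gamma}\Bigl(\int (dQ/dP)^{\gamma/(\gamma-\beta)}\,dP\Bigr)^{(\gamma-\beta)/\gamma};
\]
taking logs, dividing by $\beta>0$, and invoking \req{eq:Renyi_formula} to identify the second factor with $R_{\gamma/(\gamma-\beta)}(Q\|P)$ reproduces the displayed bound. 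In the remaining sign regimes ($\beta<0<\gamma$ and $\beta<\gamma<0$) one of $p,q$ falls below $1$ and the reverse H\"older inequality (for positive integrands) supplies the analogous estimate; after dividing by the sign-carrying factor $\beta$ and interpreting $R_\alpha$ via \req{eq:R_neg_alpha} when $\alpha<0$, the same pointwise inequality emerges. Equivalently, for $0<\beta<\gamma$ one could cite Proposition \ref{prop:UQ1} directly with $g\leftarrow \beta g$ and $c=\gamma/\beta>1$, simplifying $c/(c-1)=\gamma/(\gamma-\beta)$, and bootstrap the remaining cases by the reflection $R_\alpha(Q\|P)=R_{1-\alpha}(P\|Q)$, which effectively swaps $P$ and $Q$ in the bound.

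Finally I would check tightness with the H\"older extremizer
\[
dQ^*\;=\;e^{(\gamma-\beta)g}\,dP\big/Z_{\gamma-\beta},\qquad Z_\lambda\equiv\int e^{\lambda g}\,dP,
\]
which is a bona fide probability measure since $g\in B(\Omega)$. A short computation gives $\int e^{\beta g}dQ^* = Z_\gamma/Z_{\gamma-\beta}$, and with $\alpha=\gamma/(\gamma-\beta)$ one finds
\[
R_{\alpha}(Q^*\|P)=\frac{1}{\alpha(\alpha-1)}\bigl(\log Z_\gamma - \alpha\log Z_{\gamma-\beta}\bigr)=\frac{(\gamma-\beta)^2}{\gamma\beta}\log Z_\gamma/{\gamma} - \frac{(\gamma-\beta)}{\beta}\log Z_{\gamma-\beta}\cdot\frac{1}{(\gamma-\beta)}
\]
(up to the same algebraic identity). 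Substituting into the right-hand side of the displayed inequality and collecting the $\log Z_\gamma$ and $\log Z_{\gamma-\beta}$ terms yields exactly $(1/\gamma)\log Z_\gamma$, so $Q^*$ achieves the supremum in \req{eq:Renyi_MGF_sup}; the analogous computation with $P$ and $Q$ swapped delivers the infimum in \req{eq:Renyi_MGF_inf}.

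The main obstacle is the uniform bookkeeping across the three sign regimes for $(\beta,\gamma)$: the appropriate version of H\"older, the direction of the inequality after dividing by $\beta$, and the piecewise definition of $R_{\gamma/(\gamma-\beta)}$ all change sign, and one must verify that these sign changes conspire to produce a single clean statement. Once the pointwise inequality is secured in every regime, the tightness verification is a purely algebraic identity that is essentially independent of the signs involved.
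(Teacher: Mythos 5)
The paper does not actually prove Proposition \ref{prop:duality}; it imports it from \cite{atar2015robust}, so there is no in-text proof to compare against. Your argument is the standard route to this result (H\"older/reverse H\"older applied to $\int e^{\beta g}(dQ/dP)\,dP$ with conjugate exponents $\gamma/\beta$ and $\gamma/(\gamma-\beta)$, plus the explicit extremizer $dQ^*\propto e^{(\gamma-\beta)g}dP$), and the overall structure is sound: the pointwise inequality, being valid for arbitrary pairs of probability measures, does yield the ``$\geq$'' direction of \req{eq:Renyi_MGF_sup} and, after swapping the roles of $P$ and $Q$, the ``$\leq$'' direction of \req{eq:Renyi_MGF_inf}; and your extremizer computation, once the algebra is cleaned up (your displayed formula for $R_\alpha(Q^*\|P)$ contains a spurious $/\gamma$ and a spurious $1/(\gamma-\beta)$), does collapse to $\frac{1}{\gamma}\log Z_\gamma$ as claimed.

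One point in your write-up is genuinely wrong as stated and needs repair. You dismiss $Q\not\ll P$ on the grounds that $R_{\gamma/(\gamma-\beta)}(Q\|P)=+\infty$, but that dismissal only applies when $\gamma/(\gamma-\beta)>1$, i.e.\ in the regime $0<\beta<\gamma$. In the regime $\beta<0<\gamma$ one has $\gamma/(\gamma-\beta)\in(0,1)$, and by \req{eq:Renyi_formula} the divergence is then finite even for $Q\not\ll P$, so the inequality is not vacuous there and you cannot write $dQ=(dQ/dP)\,dP$. The fix is routine but must be carried out: take a common dominating measure $\nu$ with $dP=p\,d\nu$, $dQ=q\,d\nu$, note that $\int e^{\beta g}dQ\geq\int_{p>0}e^{\beta g}q\,d\nu$ and that dividing by $\beta<0$ reverses this, then run reverse H\"older on $\{p>0\}$ so that the resulting term is exactly $\int_{p>0}q^\alpha p^{1-\alpha}d\nu$ as required by \req{eq:Renyi_formula}. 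A similar density-level bookkeeping is needed in the regime $\beta<\gamma<0$, where $\alpha<0$ and $R_\alpha(Q\|P)$ must be read through \req{eq:R_neg_alpha} (it is $+\infty$ unless $P\ll Q$, which is what makes the reverse-H\"older factor nontrivial). You correctly identify this sign bookkeeping as the main obstacle, but the specific ``otherwise trivial'' claim is the one place where the proposal, as written, would fail.
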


The R{\'e}nyi divergences also have a variational characterization, generalizing the  Donsker-Varadhan variational formula and complementing Proposition \ref{prop:duality}. This is a new result; the  proof can be found in  \ref{app:gen_DV}:
\begin{theorem}[R{\'e}nyi-Donsker-Varadhan Variational Formula]\label{thm:gen_DV}
Let  $P,Q$ be probability measures on $(\Omega,\mathcal{M})$ and $\alpha\in\mathbb{R}\setminus\{0,1\}$. Then
\begin{align}\label{eq:gen_DV}
R_\alpha(Q\|P)=\sup_{g\in B(\Omega)}\left\{\frac{1}{\alpha-1}\log\left[\int e^{(\alpha-1)g}dQ\right]-\frac{1}{\alpha}\log\left[\int e^{\alpha g}dP\right]\right\}.
\end{align}
\end{theorem}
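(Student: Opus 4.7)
The plan is to prove the two directions of the variational identity separately: first an upper bound on the supremum that is valid for every $g\in B(\Omega)$, then attainment of the supremum via an explicit optimizing sequence.

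For the inequality $R_\alpha(Q\|P)\geq\sup_g\{\cdot\}$, I would leverage Proposition \ref{prop:duality} directly rather than redoing a H\"older calculation. Specifically, in equation \req{eq:Renyi_MGF_sup} take $\gamma=\alpha$ and $\beta=\alpha-1$, which is admissible since $\alpha\notin\{0,1\}$ forces $\beta,\gamma\in\mathbb{R}\setminus\{0\}$ and clearly $\beta<\gamma$. Then $\gamma-\beta=1$ and $\gamma/(\gamma-\beta)=\alpha$, so \req{eq:Renyi_MGF_sup} becomes
\begin{align}
\frac{1}{\alpha}\log\int e^{\alpha g}dP=\sup_{Q'\in\mathcal{P}(\Omega)}\left\{\frac{1}{\alpha-1}\log\int e^{(\alpha-1)g}dQ'-R_\alpha(Q'\|P)\right\}.
\end{align}
Taking the particular $Q'=Q$ and rearranging immediately gives the desired upper bound for every $g\in B(\Omega)$, hence for the supremum.

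For attainment, assume first that $Q\ll P$ and $R_\alpha(Q\|P)<\infty$, set $f=dQ/dP$, and take the bounded approximations $g_n=((\log f)\wedge n)\vee(-n)$ (suitably defined on $\{f=0\}$). Using \req{eq:Renyi_formula}, I would apply the dominated/monotone convergence theorem with $f^\alpha+1$ as a dominating envelope on the relevant sublevel sets to show that both $\int e^{(\alpha-1)g_n}dQ$ and $\int e^{\alpha g_n}dP$ converge to $\int f^\alpha dP$. Combined with the coefficient identity $\tfrac{1}{\alpha-1}-\tfrac{1}{\alpha}=\tfrac{1}{\alpha(\alpha-1)}$, this yields
\begin{align}
\frac{1}{\alpha-1}\log\int e^{(\alpha-1)g_n}dQ-\frac{1}{\alpha}\log\int e^{\alpha g_n}dP\to\frac{1}{\alpha(\alpha-1)}\log\int f^\alpha dP=R_\alpha(Q\|P),
\end{align}
closing the gap. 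For the case $\alpha>1$ with $Q\not\ll P$, I would pick a set $A$ with $P(A)=0$ and $Q(A)>0$ and use $g_n=n\mathbf{1}_A$: the $dP$-integral reduces to $P(A^c)=1$ while the $dQ$-integral grows like $e^{(\alpha-1)n}Q(A)$, driving the variational expression to $+\infty$ and matching $R_\alpha=+\infty$. The remaining subcase (where $Q\ll P$ but $\int f^\alpha dP=\infty$) is handled by again using the truncation $g_n=(\log f)\wedge n$ and splitting over $\{f\leq n\}$ and $\{f>n\}$: the asymmetry between the exponents $\alpha-1$ and $\alpha$ multiplying $n$ ensures the first term dominates. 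The case $\alpha<0$ is reduced to $\alpha\in(0,1)$ via \req{eq:R_neg_alpha} by swapping the roles of $P,Q$ and replacing $g$ with $-g$ in the variational functional, which maps cleanly onto the $\alpha\in(0,1)$ case already handled.

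The main obstacle I expect is the convergence step in the attainment argument, especially when $R_\alpha(Q\|P)=+\infty$ despite $Q\ll P$: one must choose the truncation so that $\int e^{\alpha g_n}dP$ is kept in check while $\int e^{(\alpha-1)g_n}dQ$ blows up at the right rate, which requires extracting tail information from the divergence of $\int f^\alpha dP$. A secondary nuisance is the careful bookkeeping of signs for $\alpha\in(0,1)$ (where $\alpha-1<0$ flips the direction of several monotonicity statements), but this is routine once the $\alpha>1$ case is in hand.
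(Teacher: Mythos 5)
Your proposal follows essentially the same route as the paper's proof: the inequality $R_\alpha(Q\|P)\geq\sup_g\{\cdot\}$ is obtained from Proposition \ref{prop:duality} with $\gamma=\alpha$, $\beta=\alpha-1$, and attainment is obtained via bounded truncations of the log-likelihood, with the separate indicator-function argument for $\alpha>1$, $Q\not\ll P$. Two points need repair. First, your attainment argument is built entirely around $f=dQ/dP$, but for $\alpha\in(0,1)$ the divergence is finite and equal to $\frac{1}{\alpha(\alpha-1)}\log\int_{p>0}q^\alpha p^{1-\alpha}d\nu$ even when $Q\not\ll P$; this subcase is covered by none of the cases you list. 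One must instead work with densities $q,p$ with respect to a common dominating measure $\nu$ and truncate $q/p$ (set to $+\infty$ on $\{p=0,\,q>0\}$), checking that the $\{p=0\}$ region is harmless because $\alpha-1<0$ suppresses large values of $q/p$ in the $dQ$-integral while $e^{\alpha g}p$ vanishes there; the order of the two truncation limits also matters here. Second, the reduction for $\alpha<0$ via \req{eq:R_neg_alpha} lands in the case $1-\alpha>1$, not in $(0,1)$ as you state; the substitution $g\to-g$ then maps the variational functional for $R_{1-\alpha}(P\|Q)$ onto the desired one, so the mechanism is right but the target is the $\alpha>1$ case. As a final note, the paper dispatches what you call the "main obstacle" (the $Q\ll P$, $R_\alpha=\infty$ case for $\alpha>1$) more cleanly than a case split: since $\min(f,n)\leq f$, one has $\min(f,n)^{\alpha-1}f\geq\min(f,n)^{\alpha}$, so the $dQ$-term of the truncated objective dominates the $dP$-term and the two logarithms combine into $\frac{1}{\alpha(\alpha-1)}\log\int\min(f,n)^\alpha\,dP$; monotone convergence then gives the conclusion in one stroke whether or not $\int f^\alpha dP$ is finite. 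This is the precise form of the "exponent asymmetry" you gesture at, and making it explicit would close that step of your argument.
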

\begin{remark}
Formally taking $\alpha\searrow 1$ in \req{eq:gen_DV}, one recovers the Donsker-Varadahn variational formula for relative entropy, \req{eq:DV_var}.
\end{remark}

\subsection{Non-Perturbative Robustness Bounds for Risk-Sensitive QoIs}
The Gibbs variational principle (\ref{eq:Gibbs}) leads to the  UQ bounds for non-risk-sensitive QoIs that were quoted in the introduction; see Proposition \ref{prop:info_ineq0}. This result, and the corresponding linearized theory (i.e., sensitivity analysis), were generalized to risk-sensitive QoIs in \cite{atar2015robust,2018arXiv180506917D}; specifically, Proposition \ref{prop:duality} leads to the UQ bounds from Proposition \ref{prop:UQ1}. Combining this with our discussion of R{\'e}nyi-ambiguity-sets in Section \ref{sec:MGF_ambiguity}, we will obtain useful tool for risk-sensitive UQ.  Before moving on to applications, we first extend the results of \cite{atar2015robust,2018arXiv180506917D} to apply to a more general class of QoIs, as well as prove some additional tightness and divergence properties.

The following  bound for risk-sensitive QoIs  is a  generalization of the result from \cite{atar2015robust}, (quoted above in Proposition \ref{prop:UQ1}), now allowing  $g$ to take the values $\pm\infty$. The proof can be found in \ref{app:UQ_proof}, though if $\tau=e^g$ for $g$ bounded, the result is an immediate consequence of Proposition \ref{prop:duality}.
\begin{lemma}\label{thm:Renyi_UQ}
Let $P,Q$ be probability measures on $(\Omega,\mathcal{M})$ and $\tau:\Omega\to[0,\infty]$ be measurable.  Then: 
\begin{enumerate}
\item
\begin{align}\label{eq:UQ_upper}
&\log\left[\int \tau dQ\right]\leq\inf_{c>1}\left\{\frac{1}{c}\log\left[ \int \tau^c dP\right]+\frac{1}{c-1}R_{c/(c-1)}(Q\|P)\right\},
\end{align}
where we define $-\infty+\infty\equiv\infty$.
\item \begin{align}\label{eq:UQ_lower}
\log\left[\int \tau dQ\right]\geq& \sup_{c<1,c\neq 0}\left\{\frac{1}{c}\log\left[ \int \tau^cdP\right]-\frac{1}{1-c}R_{1/(1-c)}(P\|Q)\right\},
\end{align}
where we define  $\infty-\infty\equiv-\infty$.  

\end{enumerate}
Here and in the following, powers of non-negative extended reals are defined by $\tau^c=\exp(c\log(\tau))$, along with the use of the continuous extensions of $\exp$ and $\log$ to the extended reals. 
\end{lemma}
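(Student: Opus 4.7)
The strategy is to reduce to the case of bounded $\log\tau$, where the inequalities follow directly from Proposition~\ref{prop:duality}, and then remove the boundedness hypothesis by truncation and monotone convergence. For the bounded case, I would set $g=\log\tau\in B(\Omega)$ and apply \req{eq:Renyi_MGF_sup} with $\beta=1$, $\gamma=c>1$: isolating $\log\int e^{g}\,dQ$ yields the inequality in (1) at that value of $c$, and taking $\inf_{c>1}$ gives the full upper bound. The lower bound (2) is obtained analogously by applying \req{eq:Renyi_MGF_inf} with $\beta=c$, $\gamma=1$ for $c<1$, $c\neq 0$, and taking $\sup$ over the admissible $c$.

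To remove the boundedness assumption, I would introduce the double truncation
\[
\tau_n^\varepsilon=\max(\varepsilon,\min(\tau,n)),\qquad 0<\varepsilon<1<n,
\]
so that $\log\tau_n^\varepsilon\in[\log\varepsilon,\log n]$ is bounded and the preceding paragraph delivers, for each fixed $c>1$,
\[
\log\int \tau_n^\varepsilon\,dQ\leq \frac{1}{c}\log\int (\tau_n^\varepsilon)^c\,dP+\frac{1}{c-1}R_{c/(c-1)}(Q\|P).
\]
I would then pass to the limit in two stages: first $\varepsilon\searrow 0$, during which $\tau_n^\varepsilon\searrow\min(\tau,n)$ pointwise with uniform upper bound $n$; then $n\nearrow\infty$, during which $\min(\tau,n)\nearrow\tau$. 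Since $c>0$, the map $x\mapsto x^c$ preserves monotonicity in each stage, so the monotone convergence theorem yields $\int\tau_n^\varepsilon\,dQ\to\int\tau\,dQ$ and $\int(\tau_n^\varepsilon)^c\,dP\to\int\tau^c\,dP$ in $[0,\infty]$; extending $\log$ continuously to $[0,\infty]$ transfers the inequality to the limit at each $c$, and $\inf_{c>1}$ produces (1). The same scheme, applied to the lower-bound version of the truncated inequality, yields (2); for $c<0$ the map $x\mapsto x^c$ is decreasing, but $(\tau_n^\varepsilon)^c$ is still monotone in each of $\varepsilon$ and $n$ (now in the opposite direction), so the MCT step goes through unchanged.

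The main technical obstacle I anticipate is not the approximation itself but the careful accounting of extended-real arithmetic. The conventions $-\infty+\infty=\infty$ in (1) and $\infty-\infty=-\infty$ in (2) are precisely those under which the displayed inequalities remain valid in the degenerate corner cases---for instance when $\int\tau^c\,dP\in\{0,\infty\}$ and $R_{c/(c-1)}(Q\|P)=\infty$ simultaneously produce an otherwise indeterminate sum on the right. Each such case must be checked either directly (e.g.\ $Q\ll P$ combined with $\tau=0$ $P$-a.s.\ forces $\int\tau\,dQ=0$, so both sides of (1) equal $-\infty$) or by returning to the truncated inequality and using that MCT preserves $\leq$ (respectively $\geq$) in $[-\infty,\infty]$ once $\log$ is extended continuously. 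No new ideas are required beyond this bookkeeping.
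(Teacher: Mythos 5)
Your strategy is essentially the paper's: establish the bound for a truncated QoI and pass to the limit. The difference is the entry point. The paper invokes Proposition~\ref{prop:UQ1} (quoted from the literature for arbitrary real-valued measurable $g$, not just bounded $g$), so it only needs to remove the values $0$ and $+\infty$; it does this with two \emph{separate} single truncations ($\tau_n=\max(\tau,1/n)$ handled via Fatou plus dominated convergence, then $\tau_m=\min(\tau,m)$ handled via monotone/dominated convergence). You instead start from the bounded-$g$ variational formula of Proposition~\ref{prop:duality} and use a double truncation $\max(\varepsilon,\min(\tau,n))$; this is a legitimate and slightly more self-contained route, since the bounded case really is immediate from \req{eq:Renyi_MGF_sup} and \req{eq:Renyi_MGF_inf} with the parameter choices you indicate.

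One step is stated too casually: the claim that ``the MCT step goes through unchanged'' for $c<0$. In the outer limit $n\nearrow\infty$ the integrands $(\min(\tau,n))^c$ \emph{decrease} to $\tau^c$, so monotone convergence does not apply; worse, monotonicity alone gives $\frac{1}{c}\log\int(\min(\tau,n))^c\,dP\leq\frac{1}{c}\log\int\tau^c\,dP$, which is the wrong direction for transferring the lower bound to the limit. You genuinely need convergence of the integrals, which requires either splitting off the trivial case $\int\tau^c\,dP=\infty$ (where the target right-hand side is $-\infty$) or, when $\int\tau^c\,dP<\infty$, the domination $(\min(\tau,n))^c\leq 1+\tau^c\in L^1(P)$ followed by dominated convergence --- exactly the paper's argument. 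Your closing paragraph on extended-real bookkeeping gestures at this, but the specific convergence mechanism should be named; similarly, the inner limit $\varepsilon\searrow0$ for $c>0$ is a decreasing limit handled by dominated (not monotone) convergence via the uniform bound $n^c$. With those corrections the proof is complete.
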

\begin{remark}
  The above generalization is necessary if one wishes to treat event probabilities (i.e., $\tau=1_A$) within the same unified framework as positive QoIs, $\tau=e^g$.  Event probabilities were studied in \cite{atar2015robust} by a specialized limiting argument, but we find it convenient to have a single result that encompasses the previously studied cases. 
\end{remark}

To understand the utility of Lemma \ref{thm:Renyi_UQ} for risk-sensitive robustness, we  first contrast it with the following robustness bounds for non-risk-sensitive QoIs, derived in \cite{dupuis2011uq,dupuis2015path,BLM,BREUER20131552,glasserman2014}. These bound the $Q$-expectation of $f$ for all $Q$ in a relative entropy (i.e., KL-divergence) neighborhood of $P$:
\begin{proposition}[\bf Gibbs information inequality]\label{prop:info_ineq0}
Let  $f:\Omega\to\overline{\mathbb{R}}$, $f\in L^1(P)\cap L^1(Q)$, and define $\widehat{f}=f-E_P[f]$.  Then 
\begin{align}\label{goal_oriented_bound}
\pm E_{Q}[f]\leq \inf_{c>0}\left\{\frac{1}{c} \Lambda_{P}^{f}(\pm c)+\frac{1}{c}R(Q||P)\right\},
\end{align}
 where $\overline{\mathbb{R}}$ denotes the extended reals and $\Lambda_P^f(c)=\log E_P[e^{cf}]$ is the cumulant generating function (CGF) of $f$ under $P$.
\end{proposition}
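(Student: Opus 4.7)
The plan is to derive both bounds in \eqref{goal_oriented_bound} as immediate consequences of the Gibbs variational principle \eqref{eq:Gibbs}, which states
\begin{align*}
\log\!\left[\int e^g dP\right]=\sup_{Q\in\mathcal{P}(\Omega)}\{E_Q[g]-R(Q\|P)\}.
\end{align*}
In particular, for any fixed $Q$ and any measurable $g$, one has $E_Q[g]-R(Q\|P)\leq \log E_P[e^g]$. I would then take $g=cf$ with $c>0$, yielding $cE_Q[f]-R(Q\|P)\leq \Lambda_P^f(c)$, and divide by $c$ to obtain $E_Q[f]\leq \tfrac{1}{c}\Lambda_P^f(c)+\tfrac{1}{c}R(Q\|P)$. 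Taking the infimum over $c>0$ gives the ``$+$'' case.

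For the ``$-$'' case, I would apply the same inequality to the function $-f$ (which also lies in $L^1(P)\cap L^1(Q)$). This produces $-E_Q[f]=E_Q[-f]\leq \tfrac{1}{c}\Lambda_P^{-f}(c)+\tfrac{1}{c}R(Q\|P)$, and since $\Lambda_P^{-f}(c)=\Lambda_P^f(-c)$, infimizing over $c>0$ gives the ``$-$'' case. The role of $\widehat{f}=f-E_P[f]$ is to allow an equivalent restatement of the bound in centered form $\pm(E_Q[f]-E_P[f])\leq \inf_{c>0}\{\tfrac{1}{c}\Lambda_P^{\widehat f}(\pm c)+\tfrac{1}{c}R(Q\|P)\}$, obtained by applying the inequality to $\widehat f$ instead of $f$; this does not require separate argument.

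The only real obstacle is that Proposition \ref{prop:DV_var} / \eqref{eq:Gibbs} is stated only for $g\in B(\Omega)$, whereas $cf$ need not be bounded. I would handle this by a standard truncation: set $f_N=(f\wedge N)\vee(-N)$, apply the bounded variational formula with $g=cf_N$, and pass to the limit. The right-hand side converges to $\Lambda_P^f(c)$ by monotone convergence (when $c>0$, $e^{cf_N}\nearrow e^{cf}$ for $f\geq 0$ and a comparable argument handles the negative part via dominated convergence once $\Lambda_P^f(c)<\infty$; if $\Lambda_P^f(c)=\infty$ the bound is trivial). The left-hand side converges to $cE_Q[f]$ by dominated convergence since $|f_N|\leq |f|\in L^1(Q)$. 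In all cases, when either $\Lambda_P^f(\pm c)=\infty$ or $R(Q\|P)=\infty$, the bound is vacuous, so no subtlety arises there.

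In short, the proof reduces to one line of the Gibbs duality applied to $\pm cf$, followed by infimization over $c>0$; the only work is the technical extension of the variational formula beyond bounded test functions, which is routine given the integrability hypotheses.
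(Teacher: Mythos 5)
Your proof is correct. The paper does not actually prove Proposition \ref{prop:info_ineq0}; it quotes it from the cited literature, where the standard derivation is exactly the one you give: the elementary direction of the Gibbs/Donsker--Varadhan duality, $E_Q[g]-R(Q\|P)\leq\log E_P[e^g]$, applied to $g=\pm cf$ and optimized over $c>0$, with a routine truncation to pass from bounded test functions to $f\in L^1(P)\cap L^1(Q)$ (your split of $e^{cf_N}$ into the monotone part on $\{f\geq 0\}$ and the dominated part on $\{f<0\}$ handles this correctly). Your observation that $\widehat f$ only serves to restate the bound in centered form is also consistent with how the result appears in those references.
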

The bound \req{goal_oriented_bound} is nontrivial only when the MGF is finite in a neighborhood of $0$; this is one sign that Proposition \ref{prop:info_ineq0} is inappropriate for risk-sensitive QoIs. The bounds in Lemma \ref{thm:Renyi_UQ} only require the existence of $c$'th moments of the QoI for an appropriate interval of $c$'s.

 Lemma \ref{thm:Renyi_UQ} and Proposition \ref{prop:info_ineq0} together illustrate an important message: UQ for non-risk-sensitive quantities only requires control of the mean of the log-likelihood, i.e., the relative entropy
\begin{align}
R(Q\|P)=E_Q[\log(dQ/dP)],
\end{align}
while UQ for risk-sensitive QoIs requires control of the MGF of the log-likelihood (i.e., control of all moments), which is equivalent to having control on the R{\'e}nyi-divergences (see \req{eq:Renyi_MGF}).  This  is why the method of constructing ambiguity sets in terms of R{\'e}nyi-divergences ambiguity sets from Section \ref{sec:background} is appropriate for deriving UQ bounds on risk-sensitive QoIs.

Combining these ideas   results in a powerful tool for  addressing the distributional robustness problem (\ref{eq:general_goal}):
\begin{theorem}\label{thm:main_UQ_result}
Let $P$ be a probability measures on $(\Omega,\mathcal{M})$, $\tau:\Omega\to[0,\infty]$ be measurable, and $Q\in \mathcal{U}^\Lambda(P)$ for some choice of $\Lambda$.  Then
\begin{align}\label{eq:UQ_upper}
&\log\left[\int \tau dQ\right]\leq\inf_{c>1}\left\{\frac{1}{c}\log\left[ \int \tau^c dP\right]+\frac{c-1}{c}\Lambda(1/(c-1))\right\},
\end{align}
where we define $-\infty+\infty\equiv\infty$.
\end{theorem}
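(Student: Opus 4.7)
The plan is to combine the upper bound from Lemma \ref{thm:Renyi_UQ}, part (1), with the defining R{\'e}nyi-divergence inequality of the ambiguity set $\mathcal{U}^\Lambda(P)$ given in Definition \ref{def:U_Lambda}. The result should follow from a single substitution and an algebraic simplification, with no serious technical obstacle; the only care needed is in tracking the correspondence between the parameter $c>1$ appearing in the UQ bound and the order $\alpha>1$ parametrizing the R{\'e}nyi divergences in the ambiguity set.

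First, I would invoke Lemma \ref{thm:Renyi_UQ}(1) applied to the non-negative measurable $\tau$, giving
\begin{equation*}
\log\left[\int \tau\, dQ\right]\leq\inf_{c>1}\left\{\frac{1}{c}\log\left[\int \tau^c\, dP\right]+\frac{1}{c-1}R_{c/(c-1)}(Q\|P)\right\}.
\end{equation*}
Since $Q\in\mathcal{U}^\Lambda(P)$, the definition in \req{eq:ambiguity_Lambda_Def} gives $R_\alpha(Q\|P)\leq \Lambda(\alpha-1)/[\alpha(\alpha-1)]$ for every $\alpha>1$. Setting $\alpha=c/(c-1)$ (so that $\alpha>1$ whenever $c>1$, and $\alpha-1=1/(c-1)$, $\alpha(\alpha-1)=c/(c-1)^2$), I would substitute this into the bound above term by term.

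The algebraic step is
\begin{equation*}
\frac{1}{c-1}R_{c/(c-1)}(Q\|P)\leq \frac{1}{c-1}\cdot\frac{(c-1)^2}{c}\Lambda\!\left(\tfrac{1}{c-1}\right)=\frac{c-1}{c}\Lambda\!\left(\tfrac{1}{c-1}\right),
\end{equation*}
which, inserted into the infimum over $c>1$, yields exactly the claimed bound. If $\Lambda(1/(c-1))=+\infty$ for some $c$, that term simply drops out of the infimum (the inequality becomes trivial there), so no additional argument is needed to handle the boundary of the effective domain of $\Lambda$. The convention $-\infty+\infty\equiv\infty$ is inherited directly from Lemma \ref{thm:Renyi_UQ} and requires no further handling. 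Thus the proof reduces to a one-line substitution, and no step poses a real obstacle—the content of the theorem is entirely in the assembly of Lemma \ref{thm:Renyi_UQ} with the ambiguity-set framework developed in Section \ref{sec:MGF_ambiguity}.
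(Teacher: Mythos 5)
Your proof is correct and is exactly the argument the paper intends: the theorem is stated as the direct combination of Lemma \ref{thm:Renyi_UQ}(1) with the defining inequality of $\mathcal{U}^\Lambda(P)$ in \req{eq:ambiguity_Lambda_Def}, via the substitution $\alpha=c/(c-1)$ (the paper omits the proof precisely because it is this one-line assembly). Your algebra $\frac{1}{c-1}\cdot\frac{(c-1)^2}{c}=\frac{c-1}{c}$ and your handling of the $\pm\infty$ conventions are both right.
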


\subsection{UQ for Rare Events}\label{ex:rare_event}
To gain some intuition on the bound \req{eq:UQ_upper}, we specialize to (rare) events, $\tau=1_A$, $A\in\mathcal{M}$. For $Q\in \mathcal{U}^\Lambda(P)$ we then have
\begin{align}\label{eq:rare_event_UB}
&\log\left[Q(A)\right]\leq\inf_{c>1}\left\{\frac{1}{c}\log\left[P(A)\right]+\frac{c-1}{c}\Lambda\left(1/(c-1)\right)\right\}.
\end{align}
For comparison, the non-risk-sensitive bound from Proposition \ref{prop:info_ineq0} yields
\begin{align}\label{eq:ex_KL_bound}
Q(A)\leq \inf_{c>0}\left\{\frac{1}{c}\log\left[1+ P(A)(e^c -1)\right]+\frac{1}{c}R(Q\|P)\right\}.
\end{align}

We illustrate these bounds using  ambiguity sets constructed from classical MGF-bounds; see Section \ref{sec:MGF_ambiguity} and \ref{app:classical_CGF}. In particular, Figure \ref{fig:bound_comparison}   demonstrates the ambiguity set inclusions from  Lemmas \ref{lemma:inclusion1} - \ref{lemma:inclusion4}, and also compares the risk-sensitive and non-risk-sensitive bounds.  In the non-risk-sensitive bound (\ref{eq:ex_KL_bound}) we   use the bound on $R(Q\|P)$ that is implied by $Q\in \mathcal{U}^\Lambda(P)$, as discussed in Lemma \ref{lemma:U_def}.  The Bennett-bound, which utilizes an upper bound on $\log(dQ/dP)$ as well as its mean and variance under $Q$, is the tightest among the risk-sensitive bounds; this is a consequence of the Lemmas in \ref{app:classical_CGF}. For each risk-sensitive bound there is a crossover level of rarity, below which it becomes  tighter than the non-risk-sensitive bound.  This is a commonly observed occurrence when comparing the methods: the R{\'e}nyi-divergence based methods generally perform better for sufficiently rare events, but for more typical events, the relative-entropy-based method from Proposition \ref{prop:info_ineq0} is  preferable.  In practice, it is of little additional cost to compute both bounds (\ref{eq:rare_event_UB}) and (\ref{eq:ex_KL_bound}) and use the minimum of the two.

\begin{figure}

\minipage{0.33\textwidth}
  \includegraphics[width=\linewidth]{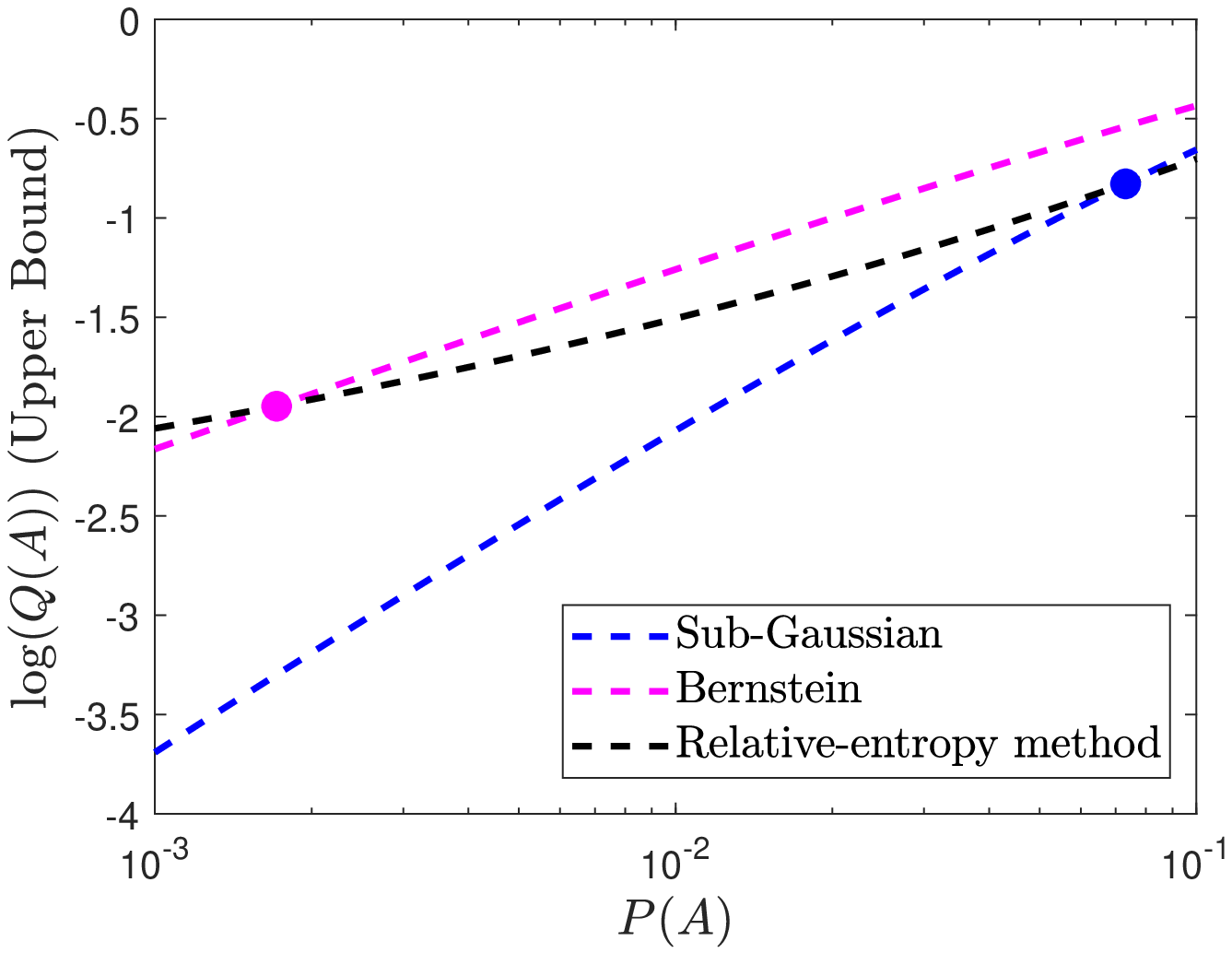}
\endminipage\hfill
\minipage{0.33\textwidth}
  \includegraphics[width=\linewidth]{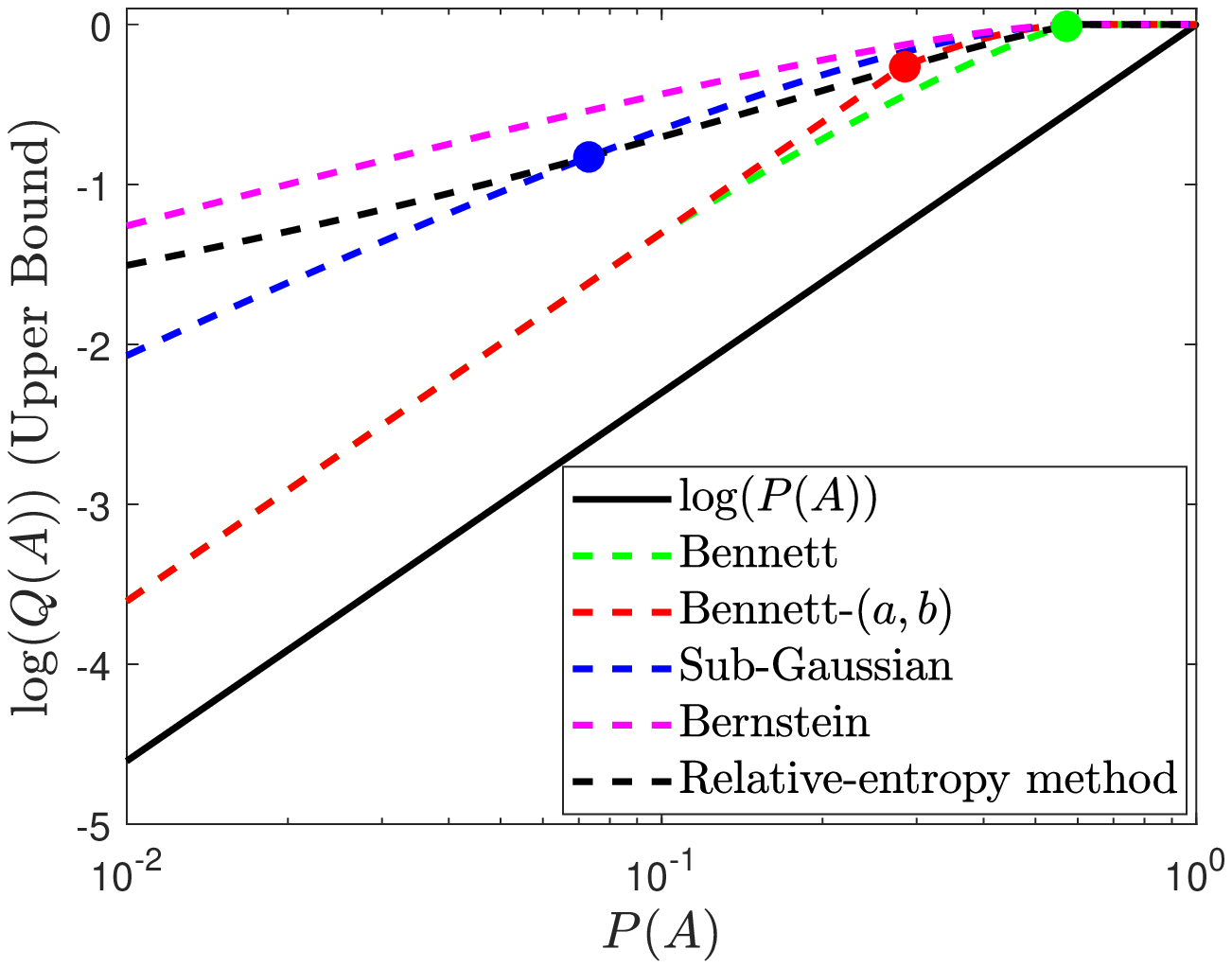}
\endminipage\hfill
\minipage{0.33\textwidth}%
  \includegraphics[width=\linewidth]{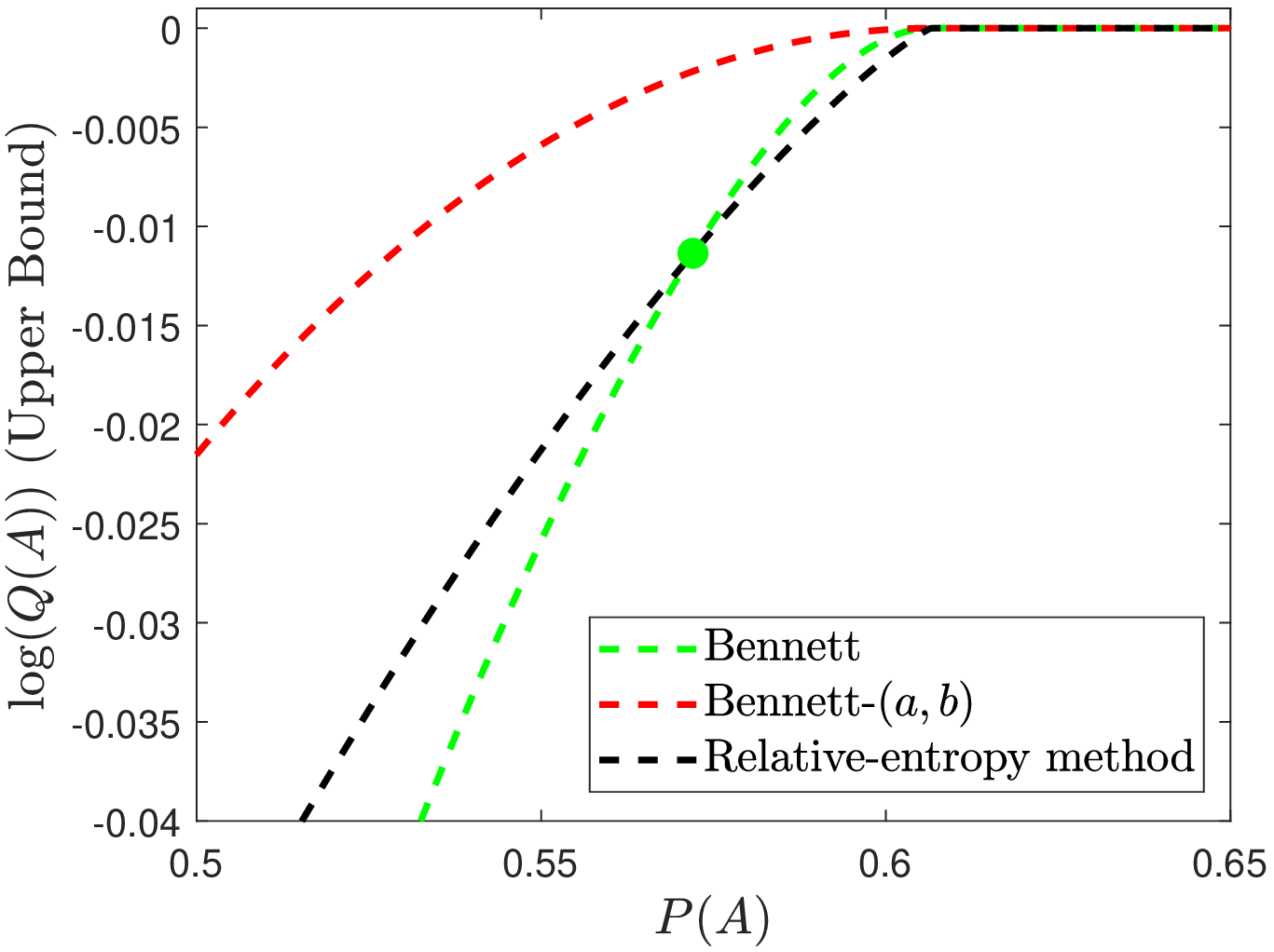}
\endminipage
\caption{Comparison of the non-risk-sensitive UQ upper bound (i.e., relative-entropy-based bound) from \req{goal_oriented_bound}  (black dashed curves) with the risk-sensitive UQ upper bounds from \req{eq:UQ_upper}. We show plots over several different ranges of $P(A)$   and comparison of various  ambiguity-set types defined in  \ref{app:classical_CGF}. These plots  illustrate the following: for each risk-sensitive bound there is a crossover level of rarity (shown by the circles of the corresponding color), below which it becomes  tighter than the non-risk-sensitive bound.   Intuitively, the relative-entropy-based method is superior for `typical events' ($P(A)$ close to $1$), but the R{\'e}nyi-based bounds are tighter for sufficiently rare events.  The parameter values used are as follows: $b=1$, $a=-1$, $\eta=1/2$, $\sigma^2=(b-\eta)(\eta-a)/8$ (for the Bennett ambiguity set), $M=1$, with the variance for the sub-Gaussian ambiguity set chosen as in the inclusion from Lemma \ref{lemma:inclusion3}.}\label{fig:bound_comparison}
\end{figure}

\section{Properties of the Risk-Sensitive UQ Bounds} 
This section contains  several  properties new properties, of both theoretical and computational interest, regarding to the UQ bounds in Lemma \ref{thm:Renyi_UQ}. First, we state the divergence property and then we give various tightness results; proofs can be found in the appendices.

\subsection{Divergence Property}
The  UQ bound from Lemma \ref{thm:Renyi_UQ} can alternatively be formulated in terms of what we call the goal-oriented R{\'e}nyi divergences.  For simplicity, in this subsection we focus on the case $\tau=e^g$ with $g\in B(\Omega)$.
\begin{theorem}[Goal-Oriented R{\'e}nyi Divergence]\label{thm:div_property}
Let $P,Q$ be probability measures on $(\Omega,\mathcal{M})$ and $g\in B(\Omega)$.  Then for $\gamma\in\mathbb{R}\setminus\{0\}$ we have
\begin{align}\label{eq:div_def_plus}
&\frac{1}{\gamma}\log\left[\int e^{\gamma g} dQ\right]-\frac{1}{\gamma}\log\left[\int e^{\gamma g} dP\right]\\
\leq& \inf_{\beta>\gamma,\beta\neq 0}\left\{\frac{1}{\beta} \log\left[\int e^{\beta g}dP \right]-\frac{1}{\gamma}\log\left[\int e^{\gamma g} dP\right]+\frac{1}{\beta-\gamma}R_{\beta/(\beta-\gamma)}(Q\|P)\right\}\equiv \Xi^\gamma_+(Q\|P,g)\notag
\end{align}
and 
\begin{align}\label{eq:div_def_minus}
&\frac{1}{\gamma}\log\left[\int e^{\gamma g} dQ\right]-\frac{1}{\gamma}\log\left[\int e^{\gamma g} dP\right]\\
\geq& \sup_{\beta<\gamma,\beta\neq 0}\left\{ \frac{1}{\beta}\log\left[\int e^{\beta g}dP \right]-\frac{1}{\gamma}\log\left[\int e^{\gamma g} dP\right]-\frac{1}{\gamma-\beta}R_{\gamma/(\gamma-\beta)}(P\|Q)\right\}\equiv \Xi^\gamma_-(Q\|P,g).\notag
\end{align}

$\Xi_\pm^\gamma$ have the following properties:
\begin{enumerate}
\item \begin{equation}\label{eq:Xi_pm_relation}
\Xi_-^\gamma(Q\|P,g)=-\Xi_+^{-\gamma}(Q\|P,-g).
\end{equation}
\item Divergence property: 
\begin{equation}\label{eq:Xi_sign}
\pm\Xi_\pm^\gamma(Q\|P,g)\geq 0,
\end{equation}
and if $g$ is not $P$-a.s. constant then equality holds in \req{eq:Xi_sign} iff $Q=P$. (Note that only one of $\Xi_\pm^\gamma(Q\|P,g)$ must be zero to guarantee that $Q=P$.)
\end{enumerate}

\end{theorem}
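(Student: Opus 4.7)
The plan is to derive both inequalities (\ref{eq:div_def_plus}) and (\ref{eq:div_def_minus}) as direct rearrangements of Proposition \ref{prop:duality}, and then extract the two properties from the definitions. For (\ref{eq:div_def_plus}) I would start from (\ref{eq:Renyi_MGF_sup}) with the roles of $\beta$ and $\gamma$ swapped, so the condition $\beta<\gamma$ there becomes the $\beta>\gamma$ used here. This supplies the pointwise bound $\frac{1}{\gamma}\log\int e^{\gamma g}\,dQ\leq\frac{1}{\beta}\log\int e^{\beta g}\,dP+\frac{1}{\beta-\gamma}R_{\beta/(\beta-\gamma)}(Q\|P)$, valid for every probability measure $Q$ and every $\beta>\gamma$, $\beta\neq 0$. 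Subtracting $\frac{1}{\gamma}\log\int e^{\gamma g}\,dP$ from both sides and taking the infimum over $\beta$ produces exactly (\ref{eq:div_def_plus}). The lower bound (\ref{eq:div_def_minus}) comes analogously from (\ref{eq:Renyi_MGF_inf}), which generates the $R(P\|Q)$-term and a supremum over $\beta<\gamma$.

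For the relation (\ref{eq:Xi_pm_relation}), I would expand $\Xi_+^{-\gamma}(Q\|P,-g)$ using the definition (\ref{eq:div_def_plus}) and make the substitution $\beta\mapsto-\beta$ inside the infimum. The constraint $\beta>-\gamma$ becomes $\beta<\gamma$, the explicit $\log E_P[e^{\pm\beta g}]$ terms line up with those of $\Xi_-^\gamma$, and the R{\'e}nyi index $-\beta/(\gamma-\beta)$ becomes $\gamma/(\gamma-\beta)$ after one application of the extended identity $R_\alpha(Q\|P)=R_{1-\alpha}(P\|Q)$ referenced after (\ref{eq:R_neg_alpha}), which simultaneously flips $Q\|P$ to $P\|Q$. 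The overall sign reversal then converts the infimum into a supremum and produces exactly $-\Xi_-^\gamma(Q\|P,g)$.

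Non-negativity $\Xi_+^\gamma\geq 0$ follows from the structure of (\ref{eq:div_def_plus}): each term in the infimum is separately non-negative. The R{\'e}nyi term uses (\ref{eq:Renyi_div_property}); the chord-slope difference $\frac{1}{\beta}\log E_P[e^{\beta g}]-\frac{1}{\gamma}\log E_P[e^{\gamma g}]$ is non-negative for $\beta>\gamma$ because $\Lambda_P^g(\lambda)=\log E_P[e^{\lambda g}]$ is convex with $\Lambda_P^g(0)=0$, so the chord slope $\Lambda_P^g(\lambda)/\lambda$ is non-decreasing in $\lambda$. The companion bound $\Xi_-^\gamma\leq 0$ is then immediate from property (1) applied with $g\mapsto-g$.

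The main obstacle is the equality case. The forward direction ($Q=P\Rightarrow\Xi_\pm^\gamma=0$) I would handle by sending $\beta\to\gamma$ in the infimum or supremum and invoking continuity of $\Lambda_P^g$ at $0$. The converse --- if $g$ is not $P$-a.s.\ constant and $\Xi_+^\gamma(Q\|P,g)=0$ then $Q=P$ --- is the delicate step; I would argue by contradiction on a minimizing sequence $\beta_n$. Using the asymptotic $\alpha R_\alpha(Q\|P)\to D_\infty(Q\|P)$ from Table \ref{Table:Renyi} (with the $\alpha\mapsto 1-\alpha$ identity handling the $\gamma<0$ sub-case), the R{\'e}nyi term approaches a strictly positive limit of the form $D_\infty(Q\|P)/\gamma$ (or $D_\infty(P\|Q)/|\gamma|$) as $\beta_n\to\gamma$; strict convexity of $\Lambda_P^g$, equivalent to non-constancy of $g$, then makes the chord-slope difference strictly positive whenever $\beta_n$ is bounded away from $\gamma$. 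Continuity at any finite interior accumulation point would force both terms to vanish there, once more contradicting strict convexity. Every accumulation mode of $\beta_n$ is thereby ruled out and $Q=P$ follows; the case $Q\not\ll P$ is trivial since some R{\'e}nyi divergence in the infimum is infinite. The equality statement for $\Xi_-^\gamma$ transfers through property (1).
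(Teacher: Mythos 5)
Your proposal is correct and follows essentially the same route as the paper: both inequalities come from rearranging Proposition \ref{prop:duality}, the symmetry relation from reindexing $\beta\mapsto-\beta$ together with $R_\alpha(Q\|P)=R_{1-\alpha}(P\|Q)$, non-negativity from the (strictly) increasing chord-slope $c\mapsto\frac{1}{c}\log E_P[e^{cg}]$, and the equality case by forcing a minimizing sequence $\beta_n\searrow\gamma$. The only cosmetic difference is the last step: the paper uses monotonicity of $\alpha\mapsto\alpha R_\alpha$ to conclude that a fixed finite-order R\'enyi divergence already vanishes, while you pass to the limit $\alpha R_\alpha\to D_\infty$; both rest on \req{eq:Renyi_nondec} and are equally valid (your side remark that $Q\not\ll P$ is ``trivial'' only applies when $\gamma>0$, but your main argument covers $\gamma<0$ anyway).
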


\begin{remark}
We call $\Xi_\pm^\gamma$ the goal-oriented R{\'e}nyi divergences.  The term goal-oriented refers to the fact that the divergences incorporate information about the QoI, $g$.
\end{remark}

\subsection{Tightness Properties}\label{sec:tightness}

Next, we show that the bounds in Lemma \ref{thm:Renyi_UQ} satisfy a pair of tightness properties.  First, we show that for a given $\tau$,  we have tightness over a particular range of $Q$'s.  The proof follows by combining the UQ bound from Lemma \ref{thm:Renyi_UQ} with explicit calculations using \req{eq:Renyi_formula}; see  \ref{app:tightness} for a detailed proof.
\begin{theorem}\label{thm:tightness1}
Let  $P$ be a probability measure on $(\Omega,\mathcal{M})$, $\tau:\Omega\to[0,\infty]$ be measurable, and $\gamma\neq 0,-1$ with $0<E_P[\tau^\gamma]<\infty$ ($\tau^\gamma$ denotes the $\gamma$'th power). Define the  probability measure
\begin{equation}
dQ_\gamma=\tau^\gamma dP/E_P[\tau^\gamma]
\end{equation}
and, for any $\beta\in \mathbb{R}$, $I\subset\mathbb{R}$, define the ambiguity sets
\begin{align}\label{eq:U_def}
&\mathcal{U}^\gamma_\beta(P)=\{Q:R_\beta(Q\|P)\leq R_\beta(Q_\gamma\|P)\},\\
&\mathcal{U}_I^\gamma(P)=\{Q:R_\alpha(Q\|P)\leq R_\alpha(Q_\gamma\|P)\,\text{ for all }\,\alpha\in I\},\notag
\end{align}
 i.e., ambiguity sets of the form  \req{eq:h_def} with $h(\alpha)= R_\alpha(Q_\gamma\|P)$, either just for $\alpha=\beta$ or for all $\alpha\in I$.

Then 
\begin{align}\label{eq:U_inclusion}
Q_\gamma\in \mathcal{U}_I^\gamma(P)\subset \mathcal{U}^\gamma_\beta(P)\,\text{  for any $\beta\in I$ }
\end{align}
and:
\begin{enumerate}
\item If $\gamma>0$ we have
\begin{align}\label{eq:tightness1_1}
\sup_{Q\in \mathcal{U}^\gamma_{(1,\infty)}(P)} \log\left[\int \tau dQ\right]=\sup_{Q\in \mathcal{U}^\gamma_{1+\gamma^{-1}}(P)} \log\left[\int \tau dQ\right]=& \log\left[\int \tau dQ_\gamma\right]\\
=& \inf_{c>1}\left\{\frac{1}{c} \log\left[\int \tau^cdP \right]+\frac{1}{c-1}R_{c/(c-1)}(Q_\gamma\|P)\right\}\notag,
\end{align}
and the infimum is achieved at $c=\gamma+1$.
\item If $\gamma\in(-1,0)$,  $\tau<\infty$ $P$-a.s., and  $E_P[ \tau^{\gamma+1}]<\infty$ then
\begin{align}\label{eq:tightness1_2}
\inf_{Q\in \mathcal{U}^\gamma_{(-\infty,0)}(P)} \log\left[\int \tau dQ\right]=\inf_{Q\in \mathcal{U}^\gamma_{1+\gamma^{-1}}(P)} \log\left[\int \tau dQ\right]=&\log\left[\int \tau dQ_\gamma\right]\\
=& \sup_{c<1,c\neq 0}\left\{\frac{1}{c}\log\left[ \int \tau^cdP\right]-\frac{1}{1-c}R_{1/(1-c)}(P||Q_\gamma)\right\}\notag,
\end{align}
and the supremum is achieved at $c=\gamma+1$.
\item If $\gamma<-1$ we have
\begin{align}\label{eq:tightness1_3}
\inf_{Q\in \mathcal{U}^\gamma_{(0,1)}(P)} \log\left[\int \tau dQ\right]=\inf_{Q\in \mathcal{U}^\gamma_{1+\gamma^{-1}}(P)} \log\left[\int \tau dQ\right]=&\log\left[\int \tau dQ_\gamma\right]\\
=& \sup_{c<1,c\neq 0}\left\{\frac{1}{c}\log\left[ \int \tau^cdP\right]-\frac{1}{1-c}R_{1/(1-c)}(P||Q_\gamma)\right\}\notag,
\end{align}
and the supremum is achieved at $c=\gamma+1$.
\end{enumerate}
\end{theorem}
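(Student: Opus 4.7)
The inclusion \eqref{eq:U_inclusion} I would dispatch first as an immediate consequence of the definitions: $Q_\gamma$ trivially satisfies $R_\alpha(Q_\gamma\|P)\leq R_\alpha(Q_\gamma\|P)$ for every $\alpha$, so $Q_\gamma\in\mathcal{U}_I^\gamma(P)$, and imposing the R{\'e}nyi bound at every $\alpha\in I$ is a strictly stronger condition than imposing it at a single $\beta\in I$. The heart of the remaining argument is one closed-form calculation: since $dQ_\gamma/dP=\tau^\gamma/E_P[\tau^\gamma]$, substituting into \eqref{eq:Renyi_formula} at $\alpha=1+\gamma^{-1}$ (so that $\alpha-1=1/\gamma$) collapses the integral to moments of $\tau$, giving
\[
R_{1+\gamma^{-1}}(Q_\gamma\|P)=\frac{\gamma^2}{1+\gamma}\Bigl[\log E_P[\tau^{1+\gamma}]-\tfrac{1+\gamma}{\gamma}\log E_P[\tau^\gamma]\Bigr].
\]
Once this identity is in hand, the three cases reduce to bookkeeping.

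For case (1), $\gamma>0$, the plan is to specialize the upper bound of Lemma \ref{thm:Renyi_UQ} to $c=\gamma+1>1$, for which $c/(c-1)=1+\gamma^{-1}$. For any $Q\in\mathcal{U}^\gamma_{1+\gamma^{-1}}(P)$, the ambiguity-set constraint gives $R_{1+\gamma^{-1}}(Q\|P)\leq R_{1+\gamma^{-1}}(Q_\gamma\|P)$, and inserting the closed form above the right-hand side of \eqref{eq:UQ_upper} telescopes to $\log E_P[\tau^{1+\gamma}]-\log E_P[\tau^\gamma]=\log\int\tau\,dQ_\gamma$. Since $Q_\gamma\in\mathcal{U}_{(1,\infty)}^\gamma(P)\subset\mathcal{U}_{1+\gamma^{-1}}^\gamma(P)$ by the inclusion already proved, the supremum over either ambiguity set is attained at $Q_\gamma$, giving the first two equalities of \eqref{eq:tightness1_1}. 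The third equality, and the optimality of $c=\gamma+1$, then follow from applying Lemma \ref{thm:Renyi_UQ} to $Q_\gamma$ itself: the infimum over $c>1$ is bounded below by $\log\int\tau\,dQ_\gamma$, and is attained at $c=\gamma+1$ by the computation just completed.

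Cases (2) and (3) follow the same template but with the lower UQ bound of Lemma \ref{thm:Renyi_UQ} at $c=\gamma+1<1$. The R{\'e}nyi divergence appearing there is $R_{1/(1-c)}(P\|Q)=R_{-1/\gamma}(P\|Q)$, which, via the reflection identity \eqref{eq:R_neg_alpha}, equals $R_{1+\gamma^{-1}}(Q\|P)$; this reconnects to the ambiguity-set constraint. Substituting the same closed form for $R_{1+\gamma^{-1}}(Q_\gamma\|P)$ then matches the lower bound to $\log\int\tau\,dQ_\gamma$, and the infimum is achieved at $Q_\gamma$. The main obstacle I anticipate is boundary-case bookkeeping in case (2), $\gamma\in(-1,0)$: the assumptions $\tau<\infty$ $P$-a.s.\ and $E_P[\tau^{\gamma+1}]<\infty$ are needed precisely to prevent the $\infty-\infty\equiv-\infty$ convention in \eqref{eq:UQ_lower} from trivializing the bound, and to guarantee that $1+\gamma^{-1}<0$ is handled through \eqref{eq:R_neg_alpha} rather than through a direct (possibly infinite) evaluation of \eqref{eq:Renyi_formula}. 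Case (3), $\gamma<-1$, is structurally identical but with $1+\gamma^{-1}\in(0,1)$, so the relevant index lives in $I=(0,1)$ and no such boundary issues arise.
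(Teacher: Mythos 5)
Your proposal is correct and follows essentially the same route as the paper's proof in \ref{app:tightness}: apply Lemma \ref{thm:Renyi_UQ} with the ambiguity-set constraint, evaluate the bound at $c=\gamma+1$, verify by direct computation with \req{eq:Renyi_formula} (and \req{eq:R_neg_alpha} for $\gamma<0$) that it collapses to $\log\int\tau\,dQ_\gamma$, and close the chain using $Q_\gamma\in\mathcal{U}_I^\gamma(P)$. Your identification of where $\tau<\infty$ $P$-a.s.\ and $E_P[\tau^{\gamma+1}]<\infty$ enter in case (2) matches the paper's use of $P\ll Q_\gamma$ and its avoidance of the $\infty-\infty\equiv-\infty$ convention.
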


\begin{remark}\label{remark:U_def2}
Theorem \ref{thm:tightness1} shows that, for fixed $P$ and $\tau$, tightness of the UQ bounds in Lemma \ref{thm:Renyi_UQ} holds over the ambiguity set of the form \req{eq:h_def} that is constructed from $h(\alpha)= R_\alpha(Q_\gamma\|P)$ for an appropriate range of $\alpha\in I$ ($I$ being related to $\gamma$ as stated in the above theorem).
\end{remark}

We also obtain a tightness result over a range of QoIs, for  fixed $P$ and $Q$.  The method of proof is very similar to the proof of Theorem \ref{thm:tightness1}; see \ref{app:tightness} for details.
\begin{theorem}\label{thm:tightness2}
Let $P$ and $Q$ be probability measures on $(\Omega,\mathcal{M})$. For $\rho:\Omega\to[0,\infty]$ measurable, define
\begin{align}
&\mathcal{T}^{+}_\beta(\rho)=\left\{\tau:\Omega\to[0,\infty]|\tau\text{ is measurable and }E_P[ \tau^\beta]\leq E_P[\rho^\beta ]\right\},\,\,\, \beta\neq 0,\\
&\mathcal{T}^{-}_\beta(\rho)=\left\{\tau:\Omega\to[0,\infty]|\tau\text{ is measurable and }  E_P[\tau^\beta]\geq E_P[\rho^\beta]\right\},\,\,\,\beta\neq 0.\notag
\end{align}
\begin{enumerate}
\item Suppose   $Q\ll P$. For $\nu>0$ define $\phi_\nu=(dQ/dP)^\nu$.

 Then
\begin{align}\label{eq:tightness2_1}
\sup_{\tau\in \mathcal{T}^{+}_{1+\nu^{-1}}(\phi_\nu)}\log\left[\int \tau dQ\right]=&\log\left[\int \phi_\nu dQ\right]\\
=&\inf_{c>1}\left\{\frac{1}{c}\log\left[\int \phi_\nu^c dP\right]+\frac{1}{c-1}R_{c/(c-1)}(Q\|P)\right\},\notag
\end{align}
 the infimum is achieved at $c=1+\nu^{-1}$, and
\begin{align}
\log\left[\int \phi_\nu dQ\right]=\nu(1+\nu)R_{1+\nu}(Q\|P).
\end{align}

\item Suppose $P\ll Q$.  For $\nu> 1$ define $\rho_\nu=(dP/dQ)^\nu$.
If $\int (dP/dQ)^{\nu-1} dP<\infty$ then 
\begin{align}\label{eq:tightness2_2}
\inf_{\tau\in\mathcal{T}^{-}_{1-\nu^{-1}}(\rho_\nu)}\log\left[\int \tau dQ\right]=&\log\left[\int \rho_\nu dQ\right]\\
=&\sup_{c<1,c\neq 0}\left\{\frac{1}{c}\log\left[\int \rho_\nu^c dP\right]-\frac{1}{1-c}R_{1/(1-c)}(P\|Q)\right\},\notag
\end{align}
 the supremum is achieved at $c=1-\nu^{-1}$, and
\begin{align}\label{eq:int_rho_nu}
\log\left[\int \rho_{\nu}dQ\right]=\nu(\nu-1)R_\nu(P\|Q).
\end{align}
\item Suppose $P\ll Q$.  For $\nu\in(0,1)$ define $\rho_{\nu}=(dP/dQ)^\nu$.
Then 
\begin{align}\label{eq:tightness2_3}
\inf_{\tau\in\mathcal{T}^{+}_{1-\nu^{-1}}(\rho_\nu)}\log\left[\int \tau dQ\right]=&\log\left[\int \rho_{\nu}dQ\right]\\
=&\sup_{c<1,c\neq 0}\left\{\frac{1}{c}\log\left[\int \rho_\nu^c dP\right]-\frac{1}{1-c}R_{1/(1-c)}(P\|Q)\right\},\notag
\end{align}
 the supremum is achieved at $c=1-\nu^{-1}$, and \req{eq:int_rho_nu} holds.

\end{enumerate}

\end{theorem}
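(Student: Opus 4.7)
The plan is to follow the same blueprint that proves Theorem \ref{thm:tightness1}: (i) apply Lemma \ref{thm:Renyi_UQ} to get a one-sided bound on $\log \int \tau\, dQ$, (ii) evaluate that bound at a distinguished $c$ tied to $\nu$ and use the $\mathcal{T}^\pm_\beta(\cdot)$ constraint to replace $\tau^c$ by the distinguished function ($\phi_\nu^c$ or $\rho_\nu^c$), and (iii) compute the resulting quantity in closed form via \eqref{eq:Renyi_formula}, showing that the bound is saturated by $\tau = \phi_\nu$ or $\tau = \rho_\nu$.

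For part (1), I will apply the upper bound \eqref{eq:UQ_upper} at $c = 1 + \nu^{-1} > 1$; the coefficient $1/c$ is then positive, so the constraint $E_P[\tau^{1+\nu^{-1}}] \leq E_P[\phi_\nu^{1+\nu^{-1}}]$ defining $\mathcal{T}^+_{1+\nu^{-1}}(\phi_\nu)$ carries through with the correct orientation. Since $\phi_\nu^{1+\nu^{-1}} = (dQ/dP)^{\nu+1}$, one application of the explicit Rényi formula \eqref{eq:Renyi_formula} with $\alpha = \nu+1$ identifies both $\log \int \phi_\nu^{1+\nu^{-1}}\, dP$ and $\log \int \phi_\nu\, dQ$ with $\nu(\nu+1) R_{\nu+1}(Q\|P)$. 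Assembling the algebra gives \eqref{eq:tightness2_1}, simultaneously verifying that the supremum over $\tau$ is achieved by $\phi_\nu$ and that the infimum over $c$ in Lemma \ref{thm:Renyi_UQ} is achieved at $c = 1 + \nu^{-1}$.

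Parts (2) and (3) are parallel but use the lower bound \eqref{eq:UQ_lower} at $c = 1 - \nu^{-1}$ and the distinguished function $\rho_\nu = (dP/dQ)^\nu$. Because $\rho_\nu^{1-\nu^{-1}} = (dP/dQ)^{\nu-1}$, the same algebraic identity via \eqref{eq:Renyi_formula} reduces $\log \int \rho_\nu^{1-\nu^{-1}}\, dP$ and $\log \int \rho_\nu\, dQ$ to $\nu(\nu-1) R_\nu(P\|Q)$, which is exactly \eqref{eq:int_rho_nu}. Matching this with the lower bound then yields \eqref{eq:tightness2_2} and \eqref{eq:tightness2_3}.

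The main obstacle, and the reason parts (2) and (3) are stated separately, is sign bookkeeping. For $\nu > 1$ one has $c = 1 - \nu^{-1} \in (0,1)$ and $1/c > 0$, so the correct constraint is $E_P[\tau^c] \geq E_P[\rho_\nu^c]$, i.e.\ $\tau \in \mathcal{T}^-_c(\rho_\nu)$. For $\nu \in (0,1)$, however, $c < 0$ and $1/c < 0$, which flips the direction of the inequality after multiplying, so the appropriate constraint is $\tau \in \mathcal{T}^+_c(\rho_\nu)$. I will also need to ensure I use the correct branch of \eqref{eq:Renyi_formula} in each case: part (2) falls in the $\alpha > 1$ branch with $P \ll Q$, while part (3) uses the $0 < \alpha < 1$ branch (equivalently \eqref{eq:R_neg_alpha}). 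The hypotheses $0 < E_P[\tau^\gamma] < \infty$ in the analogue of part (1) and $\int (dP/dQ)^{\nu-1}\, dP < \infty$ in part (2) are exactly what is needed to make all the Rényi divergences appearing in the calculation finite, so no further measure-theoretic regularity is needed beyond the manipulations already used in the proof of Theorem \ref{thm:tightness1}.
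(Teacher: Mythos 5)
Your proposal follows exactly the paper's argument: apply Lemma \ref{thm:Renyi_UQ} (upper bound for part (1), lower bound for parts (2)--(3)), evaluate at the distinguished exponent $c=1+\nu^{-1}$ or $c=1-\nu^{-1}$, pass from $\tau$ to $\phi_\nu$ or $\rho_\nu$ via the $\mathcal{T}^{\pm}$ constraint with the sign of $1/c$ dictating which of $\mathcal{T}^{+}$ or $\mathcal{T}^{-}$ is used, and close the sandwich with the explicit R\'enyi formula \eqref{eq:Renyi_formula}; the role of the integrability hypothesis in part (2) (avoiding the $\infty-\infty$ convention) is also correctly identified. This matches the paper's proof in Appendix \ref{app:tightness} in both structure and detail.
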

\begin{remark}
One can prove a related tightness property for the KL-divergences: If $R(Q\|P)<\infty$, $R(P\|Q)<\infty$, and for $\nu>0$ we define $f^\pm_\nu=\pm\nu\log(dQ/dP)$ then
\begin{align}
\pm\inf_{c>0}\left\{\frac{1}{c}\log\left(E_P\left[e^{\pm c f^\pm_\nu}\right]\right)+\frac{1}{c}R(Q\|P) \right\}=E_Q[f^\pm_\nu]=\pm\nu R(Q\|P).
\end{align}
\end{remark}

\section{Applications}\label{sec:applications}

\subsection{Battery Failure Probability}\label{sec:Battery}

As our first application of the methods developed above, we illustrate the steps one might take in analyzing risk-sensitivity for a model obtained by fitting  to a data-set.  Specifically, we will analyze the  life-time,  $T$,   of lithium-ion batteries using the data set of $N=124$ battery failure times from \cite{Severson_battery} ($T$ is the  number of charge-discharge cycles over the battery's lifetime); see the left pane of  Figure \ref{fig:hist} for a histogram. 

\begin{figure}[h]
\minipage{0.5\textwidth}
  \includegraphics[width=\linewidth]{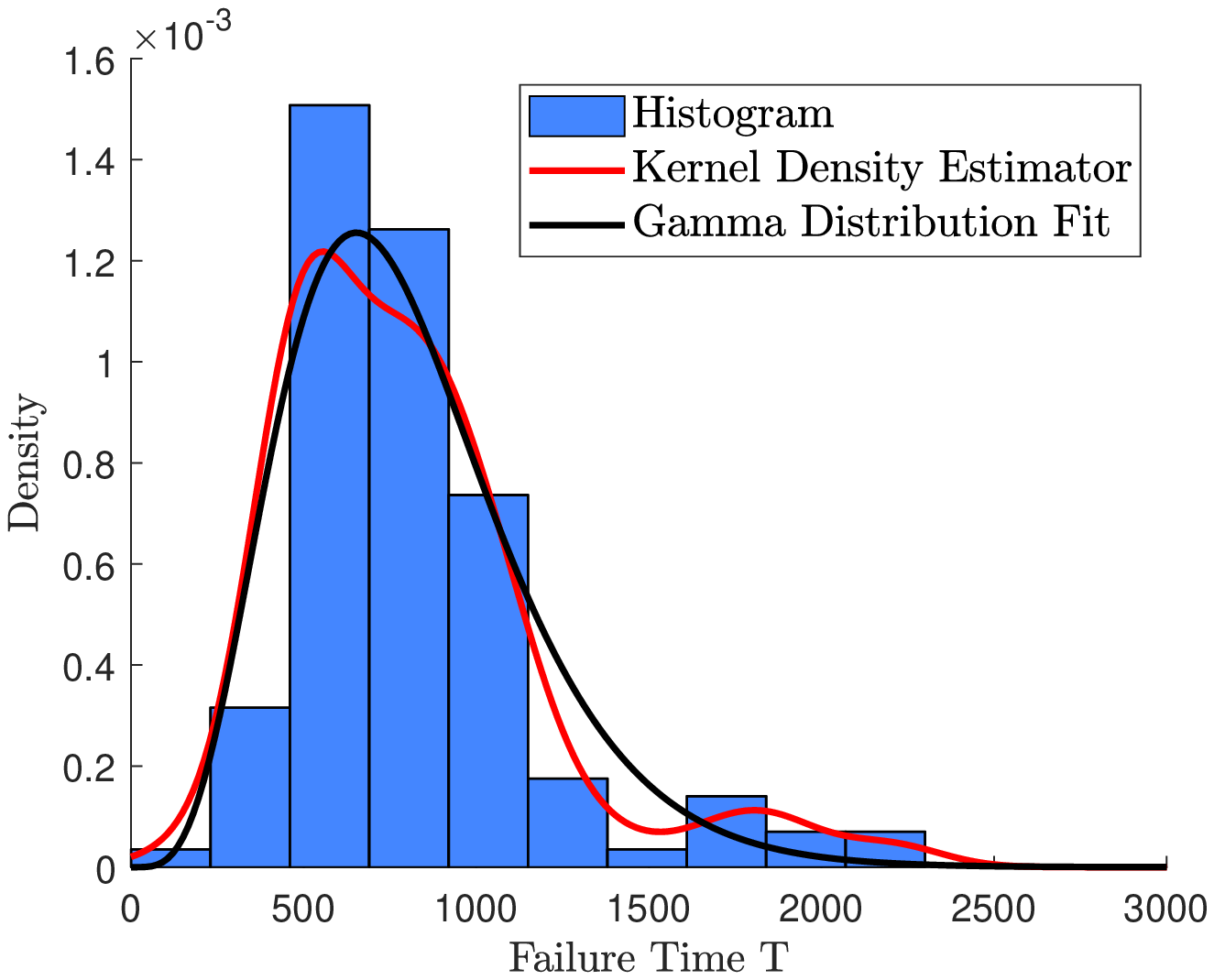}
\endminipage\hfill
\minipage{0.5\textwidth}%
  \includegraphics[width=\linewidth]{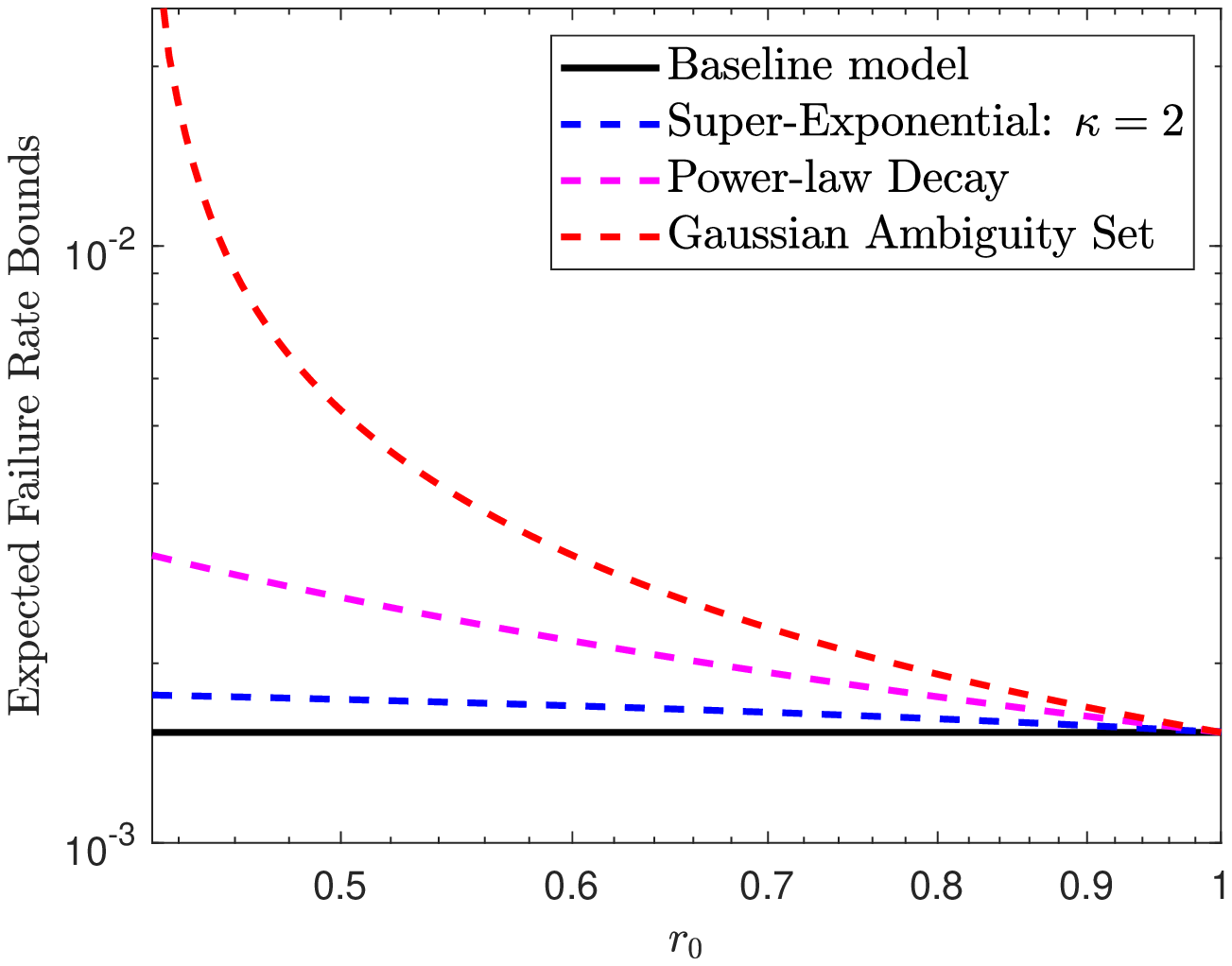}
\endminipage
\caption{Left: Histogram created from the 124 battery lifetimes, $T$, in the data-set from \cite{Severson_battery}  (see the supplementary material), together with the gamma-distribution fit and a kernel-density estimator. Right: Upper bounds on the expected battery failure rate, $E[1/T]$, computed using the risk-sensitive UQ bound \req{eq:rare_event_UB}, under the scenarios of  super-exponential decay (blue curve; see \req{eq:G_sub_exp} with $\kappa=2$), power-law decay (magenta curve; see \req{eq:G_power_law}),  and Gaussian ambiguity set (red curve; see \req{eq:G_gaussian}). The parameter $r_0$ is related to the relative entropy of the alternative models with respect to the baseline (see Section \ref{sec:ex_ambiguity_set} as well as the discussion below).   The non-risk-sensitive bound (\ref{eq:ex_KL_bound}), which is based on relative entropy, is not applicable here as the MGF of the QoI does not exist on any neighborhood of $0$.  The baseline model QoI (black solid line) was computed from a MLE fit to a gamma distribution; see \req{eq:gamma_density} - (\ref{eq:base_gamma}).}\label{fig:hist}
\end{figure}

 The steps suggested by the framework developed above are as follows:
\begin{enumerate}

\item{\bf Pick a QoI: }  If one is interested in risk-sensitive QoIs   then the R{\'e}nyi-based methodology  developed in this paper is appropriate (see the discussion in Section \ref{ex:rare_event}). Here we will study the battery failure rate: $\tau=1/T$, which is very sensitive to the tail of the distribution near $T=0$.

\item {\bf Construct the Baseline Model from the Data:}   The chosen QoI is most sensitive to the tail near zero (the `early' failure probability), and is relatively insensitive to the large $T$ tail. We choose to fit to a   gamma distribution, with probability density
\begin{equation}\label{eq:gamma_density}
  p_{a,b}(T)=\frac{1}{b^a\Gamma(a)}T^{a-1}e^{-T/b},\,\,\, t>0,
\end{equation}
as it provides a dedicated parameter, $a$, to fit the decay at $0$ (unlike, for example, the Weibull family, where the decay as $T\to0$ and $T\to\infty$ are linked).  

The maximum likelihood estimators (MLE) obtained from the data in \cite{Severson_battery} are used to define the baseline model:
\begin{align}\label{eq:base_gamma}
P(dT)=p_{a,b}(T)dT,\,\,\,\,\,a\equiv\widehat{a}=5.38,\,\,\,b\equiv\widehat{b}=149.
\end{align}

\item{\bf Build Ambiguity Set(s):} As is apparent form the frequency histogram, Figure \ref{fig:hist_frequency}, the data set contains very little information about the tails of the distribution.  The chosen QoI, $\tau=1/T$, is risk sensitive and so one should have a healthy skepticism regarding the value of $E_P[1/T]$ that is computed form the model constructed in step (2).  It is therefore important to test the robustness of the chosen QoI under  perturbations, $Q$, of the baseline model $P$. Performing a non-parametric robustness test using our methods requires the choice of an ambiguity set, defined via \req{eq:ambiguity_Lambda_Def} (or, more concretely, via \req{eq:U_mu_def}). The choice of ambiguity set should reflect the type of model-form  uncertainty one wishes to test  (i.e., the type of stress test). We discuss this choice in more detail below.

\item{\bf  Compute Robustness Bounds:} Now that we have a QoI and  ambiguity set(s), we can use Theorem \ref{thm:main_UQ_result} to obtain UQ bounds.  
\begin{remark}
Note that the MGF of the QoI $\tau=1/T$ under $P$ does not exist. This is another indication that the QoI is risk-sensitive and also implies that the non-risk-sensitive bound from Proposition \req{prop:info_ineq0} is not applicable in this case.  In contrast, the risk-sensitive bound (\ref{eq:UQ_upper}) only requires existence of $c$'th moments for $c$ in a neighborhood of $1$, and so is applicable in this case.  This is an extreme form of the phenomenon seen in Figure \ref{fig:bound_comparison}, where the risk-sensitive UQ bounds were seen to be tighter than the non-risk-sensitive bounds when the degree of risk-sensitivity (rarity) became sufficiently high.
\end{remark}

\end{enumerate}

For illustrative purposes, we characterize the system under  three tail-decay scenarios, using the framework of Section \ref{sec:tail_behavior}: power-law decay (\ref{eq:G_power_law}), super-exponential decay (\req{eq:G_sub_exp} with $\kappa=2$), and the Gaussian ambiguity set (\req{eq:G_gaussian}).   UQ bounds are shown in the right pane of Figure \ref{fig:hist} as a function of $r_0$. Each of the corresponding ambiguity sets are fixed once one specifies an upper bound on the relative entropy, as that fixes $r_0$ in  \req{eq:R_x0}, \req{eq:R_x0_2}, or \req{eq:R_x0_3}. There is a large body of literature on estimating relative entropy, and other density functionals \cite{10.2307/4616387,leonenko:hal-00322783,NIPS2010_4072,Gupta2010,Poczos:2011:NDE:3020548.3020618,NIPS2014_5505,UM2019658}, though none of these  works appear to rigorously apply to the task of estimating $R(Q\|P)$ in this situation (continuous distributions of unbounded support).  Here, we use a bootstrap method, where we take $N$ samples (with replacement) from the data set, use the sample to form a kernel density estimator and use that as a surrogate for $Q$ to compute $R(Q\|P)$.  Averaging the result over many different samples results in  $R(Q\|P)\approx 0.074$.

Selecting {\em which} ambiguity set to use, out of the three types shown in Figure \ref{fig:hist} (or out of the infinitely many other possibilities), is a modeling choice; each such choice represents a different type of stress-test of the model.   Given that we have little data regarding the tail at $0$, it is reasonable to stress-test the model via alternative models that decay  polynomially  at $0$ with a slower rate $\beta$, than the MLE fit, i.e., $\beta<a$.  Thinking of the tail near zero, such alternative models with have log-likelihood decay of the form:
\begin{align}
P(dQ/dP\geq r)\approx P( CT^{\beta-1}/T^{a-1}\geq r)\approx D r^{-a/(a-\beta)}
\end{align}
for $r$ large enough ($C$, $D$ are some constants).  This is of the power-law type (\ref{eq:G_power_law}).

\begin{remark}
Note that here, power-law decay of the likelihood under $P$ does correspond to power-law perturbation of the density near $0$.  This is because $P$ also has a power-law form near $0$.
\end{remark}

 Therefore, we are motivated to stress-test the model under the power-law family of ambiguity sets (\ref{eq:G_power_law}).  This, together with the estimate $R(Q\|P)\approx 0.074$, fixes $r_0\approx0.70$.  The resulting UQ bound obtained from Theorem \ref{thm:main_UQ_result} (see Figure \ref{fig:hist}) is $E_Q[1/T]\leq 0.00194$ for all $Q\in\mathcal{U}^{PL}_{r_0}(P)$, as compared to  the base-model expectation $E_P[1/T]=0.00153$. We emphasize that  we do {\em not} simply compute the QoI under the gamma distribution  \req{eq:gamma_density} with perturbed values of $a$; our goal with these methods is to  obtain robust UQ bounds over {\em non-parametric} model neighborhoods. 
\begin{remark}
We obtain a very similar bound $E_Q[1/T]\leq 0.00198$, if we stress-test via the Gaussian ambiguity set  (see \req{eq:G_gaussian}). Based on the discussion in Remark \ref{remark:G_PL_similar}, this is unsurprising.
\end{remark}

\subsection{Distributional Robustness for Large Deviations Rate Functions}\label{sec:rate_func}
Next, we show how our results can yield UQ bounds on large deviations rate functions. Specifically, we let $X_n$, $n\in\mathbb{Z}^+$ be IID $\mathbb{R}^d$-valued random variables with distribution $P$ and finite MGF everywhere, i.e.,
\begin{align}\label{eq:cramer_assump}
\int e^{v\cdot x}P(dx)<\infty,\,\,\,v\in\mathbb{R}^d.
\end{align}
 Let $P_n$ be the distribution of $\overline{X}_n\equiv \frac{1}{n}\sum_{i=1}^n X_i$.  Cramer's Theorem (see, for example, Section 2.2  in \cite{dembo2009large}) implies that  the $P_n$ satisfy a large deviations principle with rate function, $I_P$, given by
 \begin{align}\label{eq:rate_def}
 I_P(x)=\sup_{v\in\mathbb{R}^d}\left\{v\cdot x-\Lambda_P(v)\right\},\,\,\,\Lambda_P(v)\equiv\log\left[\int e^{v\cdot y}P(dy)\right].
 \end{align}
In the following result, we obtain UQ bounds on this type of  rate function:
\begin{theorem}
Let $P,Q$ be  probability measures  on $\mathbb{R}^d$.  Then, using the definitions from \req{eq:rate_def}, we have
  \begin{align}\label{eq:rate_function_bounds}
\sup_{c>1}\left\{\frac{1}{c}I_P(x)-\frac{1}{c-1}R_{c/(c-1)}(Q\|P)\right\} \leq I_Q(x)\leq \inf_{0<c<1}\left\{\frac{1}{c}I_P(x)+\frac{1}{1-c}R_{1/(1-c)}(P\|Q)\right\}
 \end{align}
 ($\infty-\infty\equiv-\infty$ in the lower bound).

\end{theorem}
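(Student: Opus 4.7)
The plan is to derive the two bounds by applying the risk-sensitive UQ bounds of Lemma \ref{thm:Renyi_UQ} pointwise in $v$ to the MGFs $\Lambda_Q(v)$ and $\Lambda_P(v)$, and then take the Legendre transform. Since the definition $I_Q(x)=\sup_v\{v\cdot x-\Lambda_Q(v)\}$ in \req{eq:rate_def} only uses the Legendre transform (it does not require the LDP itself), no additional hypotheses beyond those given are needed.

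For the lower bound on $I_Q(x)$, I would take $\tau=e^{v\cdot y}$ in the upper bound \req{eq:UQ_upper} of Lemma \ref{thm:Renyi_UQ} to obtain, for every $v\in\mathbb{R}^d$ and every $c>1$,
\begin{align*}
\Lambda_Q(v)\leq \frac{1}{c}\Lambda_P(cv)+\frac{1}{c-1}R_{c/(c-1)}(Q\|P).
\end{align*}
Rearranging and adding $v\cdot x$ gives
\begin{align*}
v\cdot x-\Lambda_Q(v)\geq v\cdot x-\frac{1}{c}\Lambda_P(cv)-\frac{1}{c-1}R_{c/(c-1)}(Q\|P).
\end{align*}
Taking the supremum over $v$ (and swapping it with the supremum over $c>1$, which is valid because it is a supremum over a product), then performing the change of variables $w=cv$ so that $v\cdot x-\tfrac{1}{c}\Lambda_P(cv)=\tfrac{1}{c}(w\cdot x-\Lambda_P(w))$, yields
\begin{align*}
I_Q(x)\geq \sup_{c>1}\left\{\frac{1}{c}\sup_{w}(w\cdot x-\Lambda_P(w))-\frac{1}{c-1}R_{c/(c-1)}(Q\|P)\right\}=\sup_{c>1}\left\{\frac{1}{c}I_P(x)-\frac{1}{c-1}R_{c/(c-1)}(Q\|P)\right\}.
\end{align*}

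For the upper bound on $I_Q(x)$, the argument is symmetric using the lower bound \req{eq:UQ_lower} of Lemma \ref{thm:Renyi_UQ} applied to $\tau=e^{v\cdot y}$ with $0<c<1$: this gives
\begin{align*}
\Lambda_Q(v)\geq \frac{1}{c}\Lambda_P(cv)-\frac{1}{1-c}R_{1/(1-c)}(P\|Q),
\end{align*}
so that $v\cdot x-\Lambda_Q(v)\leq v\cdot x-\tfrac{1}{c}\Lambda_P(cv)+\tfrac{1}{1-c}R_{1/(1-c)}(P\|Q)$. Taking the supremum in $v$ with the same substitution $w=cv$ produces $\tfrac{1}{c}I_P(x)+\tfrac{1}{1-c}R_{1/(1-c)}(P\|Q)$, and then taking the infimum over $c\in(0,1)$ gives the stated upper bound.

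The only subtle points are the handling of the extended-real conventions ($-\infty+\infty\equiv\infty$ in \req{eq:UQ_upper} and $\infty-\infty\equiv-\infty$ in \req{eq:UQ_lower}, which match the conventions used in \req{eq:rate_function_bounds}) and checking that the joint sup/inf exchanges are literal rather than requiring a minimax theorem; both are routine since in each case the outer operation is a single supremum (resp.\ infimum) over a product set. Notably, the assumption \req{eq:cramer_assump} is not needed for the inequalities themselves; it is only required if one wishes to interpret $I_P$ as the actual Cram\'er rate function for $\overline{X}_n$ under $P$, and analogous integrability on $Q$ would be needed for $I_Q$. The main conceptual obstacle, that the R\'enyi UQ bound involves evaluating $\Lambda_P$ at the rescaled argument $cv$, is handled cleanly by the change of variables, which is precisely what allows the rescaling $c$ to pass from inside the MGF to a prefactor $1/c$ on $I_P(x)$.
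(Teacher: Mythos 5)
Your proof is correct and follows essentially the same route as the paper's: apply the two bounds of Lemma \ref{thm:Renyi_UQ} with $\tau=e^{v\cdot y}$, absorb the rescaling $c$ into the Legendre transform via the substitution $w=cv$ (which is why the restriction $c>0$ is needed in the upper bound), and then optimize over $c$. The remarks on the extended-real conventions and on \req{eq:cramer_assump} being needed only for the large-deviations interpretation, not for the inequalities, are accurate.
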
 
\begin{proof}
Lemma \ref{thm:Renyi_UQ} with $\tau(x)=e^{v\cdot x}$ gives
\begin{align}
\Lambda_Q(v)\leq \inf_{c>1}\left\{\frac{1}{c}\log\left[\int e^{cv\cdot y}P(dy)\right]+\frac{1}{c-1}R_{c/(c-1)}(Q\|P)\right\}.
\end{align}
Therefore, for $c>1$,
 \begin{align}
 I_Q(x)\geq&\frac{1}{c}\sup_{v\in\mathbb{R}^d}\left\{ cv\cdot x-\log\left[\int e^{cv\cdot y}P(dy)\right]\right\}-\frac{1}{c-1}R_{c/(c-1)}(Q\|P)\\
  =&\frac{1}{c}I_P(x)-\frac{1}{c-1}R_{c/(c-1)}(Q\|P).\notag
 \end{align}
 Taking the supremum over $c>1$ gives the lower bound.

 The derivation of the upper bound proceeds similarly:
  \begin{align}
\Lambda_Q(v)\geq& \sup_{c<1,c\neq 0}\left\{\frac{1}{c}\log\left[ \int  e^{cv\cdot y}P(dy)\right]-\frac{1}{1-c}R_{1/(1-c)}(P\|Q)\right\}
\end{align}
( $\infty-\infty\equiv-\infty$).   So for $0<c<1$ we have
 \begin{align}
 I_Q(x)\leq& \frac{1}{c}\sup_{v\in\mathbb{R}^d}\left\{ v\cdot x- \log\left[ \int  e^{v\cdot y}P(dy)\right]\right\}+\frac{1}{1-c}R_{1/(1-c)}(P\|Q)\\
 =&\frac{1}{c}I_P(x)+\frac{1}{1-c}R_{1/(1-c)}(P\|Q).\notag
 \end{align}
 (Note that we must restrict the supremum to $c>0$ so that we can pull the $1/c$ factor out of the supremum.) Taking the infimum over $c\in(0,1)$ gives the upper bound.
\end{proof}

As a concrete example, let $P$ be a normal distribution on $\mathbb{R}^d$ with mean $0$ and invertible covariance matrix $\Sigma$; the associated rate function   is $I_P(x)=\frac{1}{2}x\Sigma^{-1}x$.  Suppose we are considering a specific alternative model 
\begin{align}
Q(dx)=Z^{-1}(1+\phi(x))P(dx),
\end{align}
e.g., a `rough' non-Gaussian perturbation $\phi$, where  $Z$ is the normalization factor and we assume known bounds $0<m\leq  1+\phi(x)\leq M$ for all $x$.

Here we illustrate the use of the classical MGF bounds (in particular, the Bennett bound) that were discussed at the end of Section \ref{sec:MGF_ambiguity}: The assumed bounds on $1+\phi$ imply
\begin{align}\label{eq:Z_bound}
a\equiv& \log(m/M)\leq \log(m/Z) \leq \log(dQ/dP)\leq\log( M/Z)\leq \log( M/m)\equiv b,
\end{align}
(note that $a=-b$) and so the Bennett bound  \req{eq:bennett_bound} combined with  \req{eq:Renyi_MGF} gives
\begin{align}\label{eq:Renyi_bennett_bound}
R_\alpha(Q\|P)\leq \frac{1}{\alpha(\alpha-1)} \left((\alpha-1) b+\log\left( \frac{(b-\eta)^2}{(b-\eta)^2+\sigma^2}e^{-(\alpha-1)(\sigma^2/(b-\eta)+(b-\eta))}+\frac{\sigma^2}{(b-\eta)^2+\sigma^2}\right)\right)\equiv h(\alpha),\,\,\,  \alpha>1,
\end{align}
where $\eta\leq b$ is an upper bound on $R(Q\|P)$ and $\sigma^2$ is an upper bound on $\Var_Q[\log(dQ/dP)]$.  \req{eq:Renyi_bennett_bound} can be used in conjunction with \req{eq:rate_function_bounds} to obtain a lower bound on the rate function.   Results are shown in  Figure \ref{fig:rate_function_plots} for several values of $\eta$ and $\sigma^2$.   For comparison, we also show the  naive bound  (black dashed curve) $I_Q(x)\geq \max\{I_P(x)-\log(M/m),0\}$, obtained from using the upper bound (\ref{eq:Z_bound}) on $dQ/dP$ in the definition \req{eq:rate_def}, together with the fact that rate functions are non-negative. 
\begin{wrapfigure}{r}{.5\textwidth}
\vspace{-1.1cm}
\begin{center}
  \includegraphics[width=.5\textwidth]{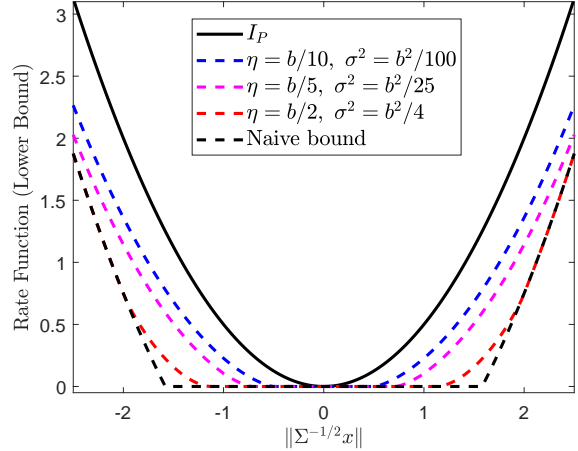}
\end{center}
\vspace{-.5cm}
\caption{ Lower bounds on the rate function for $Q$, computed from \req{eq:rate_def}. The base model is normal, $P=N(0,\Sigma)$, with rate function shown in black. We assume a bounded perturbation with $-b\leq\log(dQ/dP)\leq b=2$ and several values of $\eta=R(Q\|P)$ and $\sigma^2= \Var_Q[\log(dQ/dP)]$. }\label{fig:rate_function_plots}
\end{wrapfigure}

To understand the behavior in this  figure, note that when $\|\Sigma^{-1/2}x\|$ is large, the optimum value of $c$ in the lower bound from \req{eq:rate_function_bounds} will occur for $c\approx 1$, and so $\alpha\gg  1$ in \req{eq:Renyi_bennett_bound}.  Therefore the upper bound on $R_\alpha(Q\|P)$ will approach $b/\alpha$, and  the optimum value is approximately
\begin{align}
&\frac{1}{c^*}I_P(x)-\frac{1}{c^*-1}R_{c^*/(c^*-1)}(Q\|P)\\
\approx& \frac{1}{c^*}(I_P(x)-b)\approx I_P(x)-b,\notag
\end{align}
 the same result as the naive bound. Hence, all of the bounds converge to the naive bound for $\|\Sigma^{-1}x\|$ sufficiently large.  However, for intermediate values of  $\|\Sigma^{-1}x\|$ our bounds are much more informative; they do require the additional effort of computing (estimating)  $R(Q\|P)$ and  $\Var_Q[\log(dQ/dP)]$. Note that if one can also more accurately estimate $Z$, then a further improvement is be obtained by using  $b=\log(M/Z)$.

\subsection{Risk-Sensitive Distributionally Robust Optimization with an Application to Option Pricing}\label{ex:DRO}
Finally, as an application of the tightness results from Section \ref{sec:tightness},  we study a distributionally robust optimization (DRO) problem. DRO is an area with much recent work \cite{doi:10.1287/opre.1090.0741,doi:10.1287/opre.1090.0795,doi:10.1287/opre.2014.1314,Jiang2016,2016arXiv160402199G,2016arXiv160509349L,MohajerinEsfahani2018,doi:10.1137/16M1094725,doi:10.1287/moor.2018.0936}, wherein both information-theoretic and optimal-transport (Wasserstein-metric) based methods are widely used. In this section, we develop a DRO result applicable to risk-sensitive QoIs, using R{\'e}nyi divergences.

The general setting is as follows: Fix a probability measure $P$, a collection of QoIs $\tau_y:\Omega\to[0,\infty]$, $y\in Y$ (the parameter-space to be optimized over), and ambiguity sets $\mathcal{U}^y\subset\mathcal{P}(\Omega)$, $y\in Y$.   The goal in risk-sensitive distributionally robust optimization (DRO) is to compute one of the following:
 \begin{align}\label{eq:DRO_def}
 \argmin_{y\in Y} \sup_{Q\in\mathcal{U}^y} \log E_Q[\tau_y]\,\,\,\, \text{ or }\,\,\,\, \argmax_{y\in Y} \inf_{Q\in\mathcal{U}^y} \log E_Q[\tau_y].
 \end{align}
In the former, $\tau_y$ is often thought of as a cost, and the latter as a profit; in either case, $y$ is a design or control variable.  The problem \req{eq:DRO_def} then corresponds to finding the value of the control variable that minimizes (resp. maximizes) the worst case cost (resp. profit) over the ambiguity sets  $\mathcal{U}^y$.

\begin{remark}
The inclusion of the log in \req{eq:DRO_def} doesn't impact the minimizer/maximizer, but formulating the problem this way better connects with the results developed above. We need to treat the min-max and max-min problems separately, as our risk-sensitive UQ results only apply to non-negative QoIs, and hence we cannot use the standard technique of converting between the problem types simply by changing the sign of the QoI.
\end{remark}

Theorem \ref{thm:tightness1} gives conditions under which one can explicitly evaluate the inner maximization or minimization in \req{eq:DRO_def}, for specific classes of ambiguity sets. As a simple, concrete example, we consider the pricing of perpetual American put options.  For background see  Chapter 8 of \cite{shreve2004stochastic} or Chapter 8 of \cite{glasserman2013monte}. 

Under the baseline model, the asset price evolves according to geometric Brownian motion in an interest rate-$r$ environment and with volatility $\sigma$,
\begin{align}\label{eq:options_base}
 dX_t=rX_rdt+\sigma X_tdW_t,&
\end{align}
where  $r,\sigma>0$ are constants.  This has the explicit solution
\begin{align}\label{eq:X_sol}
X_t=X_0 \exp\left(\sigma W(t)+(r-\sigma^2/2)t\right).
\end{align} 
The baseline measure, $P$, is then the distribution of $X$ on path space, $C([0,\infty),\mathbb{R})$.

The quantity of interest we consider is defined as follows:  Let $K>0$ be the option strike price.   The  payout if the option is exercised at time $t\geq 0$ is $K-x_t$, where $x_t$ is the asset price at time $t$. The relevant QoI is then the value, discounted to the present time:
\begin{align}
V[x]_t=e^{-rt}(K-x_t) 1_{t<\infty}.
\end{align}

We assume that the option-holder's strategy is to exercise the option when the stock price hits some level $L$, assumed to satisfy  $0<L<K<X_0$, i.e.,  we consider the   stopping times $\tau_L[x]=\inf\{t\geq 0: x_t\leq L\}$.  The option holder's goal is to choose $L$ so as to maximize the expected option value at the stopping time, i.e., maximize $E_P[V_{\tau_L}]$.  However, due to lack of confidence in the model, one might desire to build in a safety margin by first maximizing over some ambiguity set of alternative models. With this as motivation, our goal here is  to compute the following:
\begin{align}\label{eq:V_L}
\argmax_{L}\inf_{Q\in\mathcal{U}} \log(E_Q[V_L]),\,\,\,\,\,V_L[x]\equiv e^{-r\tau_L}(K-x_{\tau_L}) 1_{\tau_L<\infty}=(K-L)e^{-r\tau_L}1_{\tau_L<\infty}
\end{align}
for an appropriate ambiguity set $\mathcal{U}$ (to be discussed below). 

From \req{eq:X_sol}, one can see that $\tau_L$ is the level $a_L$  hitting time of a Brownian motion with constant drift $\mu$, where
\begin{align}\label{eq:option_a_mu}
a_L\equiv-\sigma^{-1}\log(X_0/L),\,\,\,\,\,\,\mu\equiv r/\sigma-\sigma/2.
\end{align}
 From \req{eq:tightness1_2}, we see that if we choose $\gamma<0,\gamma\neq-1$ and 
\begin{align}\label{eq:put_ambiguity}
\mathcal{U}=\mathcal{U}^\gamma_{1+\gamma^{-1}}(P)=\{Q:R_{1+\gamma^{-1}}(Q\|P)\leq R_{1+\gamma^{-1}}(Q_\gamma\|P)\},\,\,\,dQ_\gamma\equiv  V_L^\gamma dP/E_P[V_L^\gamma]
\end{align}
($V_L^\gamma$ denotes the $\gamma$'th power) then the inner minimization is computable, and we obtain
\begin{align}\label{eq:option_optimization}
&\argmax_{L}\inf_{Q\in U^\gamma_{1+\gamma^{-1}}(P)} \log(E_Q[V_L])=\argmax_{L} \log(E_{Q_\gamma}[V_L])=\argmax_{L} E_{Q_\gamma}[V_L].
\end{align}
To make the above calculation valid, we must restrict to parameter values where $\mu<0$, so that $\tau_L<\infty$ $P$-a.s. (see Chapter 8 of \cite{shreve2004stochastic} and page 196 in \cite{karatzas2014brownian}), otherwise we will have $E_P[V_L^\gamma]=\infty$.

The expectation  can be evaluated using the formula for the MGF of $\tau_L$ under $P$ (again, see  Chapter 8 of \cite{shreve2004stochastic}):
\begin{align}
E_P[e^{-\lambda \tau_L}]=\exp(a_L\mu-|a_L|\sqrt{\mu^2+2\lambda}),\,\,\,\,\lambda>-\mu^2/2.
\end{align}
Using this we find for $0>\gamma>-\mu^2/(2r)$, $\gamma\neq -1$:
\begin{align}\label{eq:E_VL}
E_{Q_\gamma}[V_L]=&\int V_L^{1+\gamma} dP/E_P[V_L^\gamma]=  (K-L) \int e^{-r(1+\gamma)\tau_L}dP/\int e^{-r\gamma \tau_L}dP\\
=&(K-L) \exp(-|a_L|\sqrt{\mu^2+2r(1+\gamma)}+|a_L|\sqrt{\mu^2+2r\gamma})\notag\\
=&(K-L) (L/X_0)^{d_\gamma/\sigma}, \,\,\,\,\,d_\gamma\equiv\sqrt{\mu^2+2r(1+\gamma)}-\sqrt{\mu^2+2r\gamma}.\notag
\end{align}
One can calculate $R_{1+\gamma^{-1}}(Q_\gamma\|P)$ explictly, and thereby write the ambiguity set as
\begin{align}\label{eq:U_def}
\mathcal{U}^\gamma_{1+\gamma^{-1}}(P)=\left\{Q:R_{1+\gamma^{-1}}(Q\|P)\leq -\frac{\gamma}{\gamma+1}a_L\mu+\frac{\gamma^2}{\gamma+1}|a_L|\sqrt{\mu^2+2(\gamma+1)r}+\gamma|a_L|\sqrt{\mu^2+2r\gamma}\right\}.
\end{align}
Note that this ambiguity set only constrains $R_\alpha(Q\|P)$ at the single (but $\gamma$-dependent) value of $\alpha=1+\gamma^{-1}$; in the language of Definition \ref{eq:h_def}, the right-hand-side of \req{eq:U_def} defines $h(1+\gamma^{-1})$, with  $h$ being undefined otherwise.

For a simple example of a class of alternative models that are contained in $\mathcal{U}^\gamma_{1+\gamma^{-1}}(P)$, one can take $Q$ to be the distribution of $Y$ in path space, where $Y_t$ solves an SDE of the form
\begin{align}\label{eq:options_alt}
 dY_t=(r+\Delta r_t)Y_tdt+\sigma Y_tdW_t,&
\end{align}
and the process $\Delta r$ is bounded on $[0,T]$ and equals $0$ for $t>T$. A straightforward application Girsanov's theorem can then be used to show that
\begin{align}
R_\alpha(Q\|P)\leq \frac{\|\Delta r\|_\infty^2}{2\sigma^2}T,\,\,\,\text{ for all $\alpha>1$.}
\end{align}
Combining this with the right-hand-side of \req{eq:U_def}, we see that
\begin{align}\label{eq:U_def}
Q\in \mathcal{U}^\gamma_{1+\gamma^{-1}}(P)\,\,\,\text{ if }\, \,\, \frac{\|\Delta r\|_\infty^2}{2\sigma^2}T\leq -\frac{\gamma}{\gamma+1}a_L\mu+\frac{\gamma^2}{\gamma+1}|a_L|\sqrt{\mu^2+2(\gamma+1)r}+\gamma|a_L|\sqrt{\mu^2+2r\gamma}.
\end{align}

\begin{remark}
More general alternative models that (\ref{eq:options_alt}) are possible, including ones where $\Delta r$ has unbounded support, but  bounding the R{\'e}nyi divergences in such cases is a  more involved task; we do not enter into the techniques for doing so here, as they are tangential to our primary goal in this paper.

 It should also be noted that in more realistic option-pricing models, the option itself has a finite expiration date, $T$, and so the problem is naturally posed on  a compact time interval. However, analyzing such models is more complex and we do not discuss them further here; see \cite{shreve2004stochastic} for details.
\end{remark}

From \req{eq:E_VL}, one can compute the  maximizer in \req{eq:option_optimization}:
\begin{align}\label{eq:L_star}
L^*_\gamma\equiv\argmax_{L} E_{Q_\gamma}[V_L]=\frac{d_\gamma K}{d_\gamma+\sigma},\,\,\,\,d_\gamma=\sqrt{\mu^2+2r(1+\gamma)}-\sqrt{\mu^2+2r\gamma}.
\end{align}
It is important to note that $d_\gamma$ is decreasing and
\begin{align}
\lim_{\gamma\nearrow 0} L^*_\gamma=\frac{2rK}{2r+\sigma^2},
\end{align}
which equals the optimal $L$ in the non-risk-sensitive formulation of the problem (see 8.3.12 in \cite{shreve2004stochastic}).  Therefore we can interpret $\gamma<0$ as the degree of risk-aversion; when $\gamma$ decreases below $0$, $L^*_\gamma$ increases, meaning that a more conservative choice of stopping level is considered optimal when one has  uncertainty about one's model.

\appendix

\setcounter{theorem}{0}
    \renewcommand{\thetheorem}{\Alph{section}\arabic{theorem}}

\section{Proof of the R{\'e}nyi-Donsker-Varadhan Variational Formula}\label{app:gen_DV}
Here we prove the generalization of the Donsker-Varadhan variational formula from Theorem \ref{thm:gen_DV} and restated below:
\begin{theorem}[R{\'e}nyi-Donsker-Varadhan Variational Formula]
Let  $P,Q$ be probability measures on $(\Omega,\mathcal{M})$ and $\alpha\in\mathbb{R}\setminus\{0,1\}$. Then
\begin{align}\label{eq:Renyi_var}
R_\alpha(Q\|P)=\sup_{g\in B(\Omega)}\left\{\frac{1}{\alpha-1}\log\left[\int e^{(\alpha-1)g}dQ\right]-\frac{1}{\alpha}\log\left[\int e^{\alpha g}dP\right]\right\}.
\end{align}
\end{theorem}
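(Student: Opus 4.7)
The plan is to prove \req{eq:Renyi_var} by establishing each inequality separately.

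First, for the direction $R_\alpha(Q\|P)\geq \sup_g\{\cdot\}$, I would invoke Proposition \ref{prop:duality} with $\beta=\alpha-1$ and $\gamma=\alpha$: these satisfy the hypotheses $\beta,\gamma\neq 0$ and $\beta<\gamma$ exactly when $\alpha\neq 0,1$, and produce $\gamma/(\gamma-\beta)=\alpha$. Thus \req{eq:Renyi_MGF_sup} specializes to
\begin{equation*}
\frac{1}{\alpha}\log\int e^{\alpha g}dP \;\geq\; \frac{1}{\alpha-1}\log\int e^{(\alpha-1)g}dQ-R_\alpha(Q\|P)
\end{equation*}
(the case $Q'=Q$ in the sup), which rearranges to the pointwise bound for every $g\in B(\Omega)$; supremizing gives the inequality.

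For the reverse direction I would test against the nominal maximizer $g^\star=\log(dQ/dP)$. If $g^\star$ were bounded, formula \req{eq:renyi_formula} makes both $\int e^{(\alpha-1)g^\star}dQ$ and $\int e^{\alpha g^\star}dP$ equal to $\int(dQ/dP)^\alpha dP=e^{\alpha(\alpha-1)R_\alpha(Q\|P)}$, and the arithmetic $\alpha R_\alpha(Q\|P)-(\alpha-1)R_\alpha(Q\|P)=R_\alpha(Q\|P)$ reproduces the claimed value exactly. To enforce boundedness I would use the truncations $g_n=(g^\star\wedge n)\vee(-n)\in B(\Omega)$, which converge pointwise to $g^\star$ on the $Q$-full set $\{dQ/dP>0\}$. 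When $R_\alpha(Q\|P)<\infty$ (automatic for $\alpha\in(0,1)$ by Jensen applied to the concave $x\mapsto x^\alpha$, which gives $\int(dQ/dP)^\alpha dP\leq 1$), splitting each integral on the sign of $g^\star$ and bounding above by a multiple of $(dQ/dP)^\alpha+1$ produces $L^1$ dominators, so dominated convergence forces $\mathrm{RHS}(g_n)\to R_\alpha(Q\|P)$.

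The main obstacle is the case $R_\alpha(Q\|P)=+\infty$, possible only for $\alpha>1$. If $Q\not\ll P$, pick $A$ with $P(A)=0<Q(A)$ and use $g=M 1_A$: then $\int e^{\alpha g}dP=1$ and $\int e^{(\alpha-1)g}dQ\geq e^{(\alpha-1)M}Q(A)$, so letting $M\to\infty$ drives the RHS to $+\infty$. If instead $Q\ll P$ but $\int(dQ/dP)^\alpha dP=+\infty$, I would use the upper-only truncation $g_n=\log(((dQ/dP)\wedge n)\vee 1)\in[0,\log n]$ and exploit a region-by-region analysis on $\{dQ/dP\leq 1\}$, $\{1<dQ/dP\leq n\}$, $\{dQ/dP>n\}$: this shows that $A_n:=\int e^{(\alpha-1)g_n}dQ$ dominates $B_n:=\int e^{\alpha g_n}dP$ up to a universal constant, $A_n\geq B_n-1$, while monotone convergence forces $B_n\to\infty$ (the divergence must live on $\{dQ/dP>1\}$, since $\int_{dQ/dP\leq 1}(dQ/dP)^\alpha dP\leq 1$). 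The closing step is the algebraic identity
\begin{equation*}
\tfrac{1}{\alpha-1}\log A_n-\tfrac{1}{\alpha}\log B_n=\tfrac{1}{\alpha-1}\log\!\left(A_n/B_n^{(\alpha-1)/\alpha}\right)\geq \tfrac{1}{\alpha(\alpha-1)}\log B_n+o(1)\to+\infty.
\end{equation*}

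Finally, to extend to $\alpha<0$ I would invoke the defining reflection $R_\alpha(Q\|P)=R_{1-\alpha}(P\|Q)$ (with $1-\alpha>1$) together with the substitution $g\mapsto -g$ in \req{eq:Renyi_var}, which transforms the RHS at order $\alpha$ between $(Q,P)$ into the RHS at order $1-\alpha$ between $(P,Q)$, already handled above. The hard part is the last subcase of paragraph three, where $A_n$ and $B_n$ blow up simultaneously and one must justify $A_n/B_n^{(\alpha-1)/\alpha}\to\infty$; this hinges on the specific choice of upper-only truncation, which keeps $e^{g_n}\leq(dQ/dP)\vee 1$ pointwise and thereby makes the comparison $A_n\geq B_n-1$ clean.
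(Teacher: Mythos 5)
Your overall route is the same as the paper's: the inequality $R_\alpha(Q\|P)\geq\sup_g\{\cdot\}$ comes from Proposition \ref{prop:duality} with $(\beta,\gamma)=(\alpha-1,\alpha)$, the case $\alpha<0$ is reduced to $\alpha>1$ via $R_\alpha(Q\|P)=R_{1-\alpha}(P\|Q)$ together with $g\mapsto-g$, the case $\alpha>1$ with $Q\not\ll P$ is handled by $g=M1_A$, and the reverse inequality is obtained by truncating the log-likelihood and passing to the limit. Your treatment of the subcase $\alpha>1$, $Q\ll P$, $R_\alpha(Q\|P)=+\infty$ (upper-only truncation, the comparison $A_n\geq B_n-1$, and the exponent bookkeeping $\tfrac{1}{\alpha-1}-\tfrac{1}{\alpha}=\tfrac{1}{\alpha(\alpha-1)}$) is correct and is in substance the same cancellation the paper exploits when it bounds the supremum below by $\frac{1}{\alpha(\alpha-1)}\log\int f_n(dQ/dP)^\alpha\,dP$ and lets $n\to\infty$ by monotone convergence.

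There is, however, one genuine gap: the parenthetical claim that $R_\alpha(Q\|P)<\infty$ is ``automatic for $\alpha\in(0,1)$ by Jensen'' is false. Jensen gives $\int_{p>0}q^\alpha p^{1-\alpha}\,d\nu\leq 1$, hence only $R_\alpha\geq 0$; the integral can vanish. If $P$ and $Q$ are mutually singular, then $\int_{p>0}q^\alpha p^{1-\alpha}\,d\nu=0$ and, since $\alpha(\alpha-1)<0$, $R_\alpha(Q\|P)=+\infty$ even for $\alpha\in(0,1)$. This case is covered by none of your branches (all of your $+\infty$ branches are restricted to $\alpha>1$), so the proof as written is incomplete on $(0,1)$. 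The fix is cheap: take $A$ with $P(A)=1$, $Q(A)=0$ and test $g=-M1_A$, which gives a right-hand side equal to $0-(-M)=M\to+\infty$; alternatively, retain a lower truncation level $1/m$ in the $P$-integral, as the paper does, so that this term stays finite while the $Q$-term diverges. Relatedly, for $\alpha\in(0,1)$ you may not assume $Q\ll P$, so the ``nominal maximizer'' must be written as $\log(q/p)$ for densities with respect to a common dominating measure, with the convention $+\infty$ on $\{p=0,\,q>0\}$. Your truncation and dominated-convergence argument does survive this reading --- the contribution of $\{p=0\}$ to $\int e^{(\alpha-1)g_n}\,dQ$ is $e^{(\alpha-1)n}Q(p=0)\to 0$ because $\alpha-1<0$ --- but this needs to be said explicitly.
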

\begin{proof}
If one can show \req{eq:Renyi_var} for all $\alpha>1$ and all $P,Q$, then, using the relation \req{eq:R_neg_alpha} and reindexing $g\to -g$ in the supremum, one find that \req{eq:Renyi_var} also holds for all $\alpha<0$.  So we only need to consider the cases $\alpha\in(0,1)$ and $\alpha>1$.

Using \req{eq:Renyi_MGF_sup} with $\beta=\alpha-1$, $\gamma=\alpha$, for all $g\in B(\Omega)$ we find
\begin{align}
\frac{1}{\alpha} \log\left[\int e^{\alpha g}dP \right]\geq 
\frac{1}{\alpha-1}\log\left[\int e^{(\alpha-1) g} dQ\right]-R_{\alpha}(Q\|P),
\end{align}
and hence
\begin{align}
R_\alpha(Q\|P)\geq\sup_{g\in B(\Omega)}\left\{\frac{1}{\alpha-1}\log\left[\int e^{(\alpha-1)g}dQ\right]-\frac{1}{\alpha}\log\left[\int e^{\alpha g}dP\right]\right\}\equiv H(Q\|P).
\end{align}

To show the reverse inequality, we separate into three cases:
\begin{enumerate}
\item $\alpha>1$ and $Q\not\ll P$: We will show $H(Q\|P)=\infty$, which will prove the desired inequality.  To do this, take a measurable set $A$ with $P(A)=0$ but $Q(A)\neq 0$.  Let $g=N1_A$.  Then
\begin{align}
H(Q\|P)\geq& \frac{1}{\alpha-1}\log\int e^{(\alpha-1)g}dQ-\frac{1}{\alpha}\log\int e^{\alpha g}dP\\
=&\frac{1}{\alpha-1}\log\left( e^{(\alpha-1)N}Q(A)+Q(A^c)\right)-\frac{1}{\alpha}\log P(A^c)\to \infty\notag
\end{align}
as $N\to\infty$, since $\alpha>1$.  
\item $\alpha>1$ and $Q\ll P$: In this case we have
\begin{align}
R_\alpha(Q\|P)=\frac{1}{\alpha(\alpha-1)}\log\left(\int (dQ/dP)^\alpha dP\right).
\end{align}
Let $f_{n,m}(x)=x1_{1/m<x<n}+n1_{x\geq n}+1/m1_{x\leq 1/m}$ and $g_{n,m}=\log(f_{n,m}(dQ/dP))$.  These are bounded and measurable and so
\begin{align}
H(Q\|P)\geq& \frac{1}{\alpha-1}\log\int e^{(\alpha-1)g_{n,m}}dQ-\frac{1}{\alpha}\log\int e^{\alpha g_{n,m}}dP\\
=&\frac{1}{\alpha-1}\log\int f_{n,m}(dQ/dP)^{(\alpha-1)}\frac{dQ}{dP}dP-\frac{1}{\alpha}\log\int f_{n,m}(dQ/dP)^\alpha dP.\notag
\end{align}
Define  $f_n(x)=x1_{x<n}+n1_{x\geq n}$. Using the dominated convergence theorem to take $m\to\infty$, we find 
\begin{align}
H(Q\|P)\geq&\frac{1}{\alpha-1}\log\int f_{n}(dQ/dP)^{(\alpha-1)}\frac{dQ}{dP}dP-\frac{1}{\alpha}\log\int f_{n}(dQ/dP)^\alpha dP\\
\geq&\frac{1}{\alpha(\alpha-1)}\log\int f_{n}(dQ/dP)^\alpha dP.\notag
\end{align}

We have $0\leq f_n(dQ/dP)\nearrow dQ/dP$, and so the monotone convergence theorem  gives
\begin{align}
H(Q\|P)\geq&\frac{1}{\alpha(\alpha-1)}\log\int(dQ/dP)^\alpha dP=R_\alpha(Q\|P).
\end{align}
This proves the result in this case.

\item $\alpha\in(0,1)$: Here, we have
\begin{align}
R_\alpha(Q\|P)=\frac{1}{\alpha(\alpha-1)}\log\left(\int_{p>0} q^\alpha p^{1-\alpha}d\nu\right),
\end{align}
where $\nu$ is any sigma-finite positive measure for which $dQ=qd\nu$ and $dP=pd\nu$. Define $f_{n,m}(x)$ as in the previous case and let $g_{n,m}=\log(f_{n,m}(q/p))$, where $q/p$ is defined to be $0$ if $q=0$ and $+\infty$ if $p=0$ and $q\neq 0$.

$g_{n,m}$ are bounded and measurable, hence
\begin{align}
H(Q\|P)\geq& -\frac{1}{1-\alpha}\log\int e^{(\alpha-1)g_{n,m}}dQ-\frac{1}{\alpha}\log\int e^{\alpha g_{n,m}}dP\\
=&-\frac{1}{1-\alpha}\log\int f_{n,m}(q/p)^{\alpha-1}qd\nu-\frac{1}{\alpha}\log\int f_{n,m}(q/p)^\alpha pd\nu.\notag
\end{align}
We can take $n\to\infty$ using the dominated convergence theorem (here it is critical that $\alpha\in(0,1)$) to get
\begin{align}\label{eq:H_lb}
H(Q\|P)\geq&-\frac{1}{1-\alpha}\log\int \frac{1}{\max(1/m,q/p)^{1-\alpha}}qd\nu-\frac{1}{\alpha}\log\int \max(1/m,q/p)^\alpha pd\nu\\
\geq&-\frac{1}{1-\alpha}\log\int_{q>0,p>0} q^\alpha p^{1-\alpha}d\nu-\frac{1}{\alpha}\log\int_{p>0} \max(1/m,q/p)^\alpha pd\nu.\notag
\end{align}
(Note that the second term is always finite.)

We can use the dominated convergence theorem on the second term to obtain
\begin{align}
H(Q\|P)\geq&-\frac{1}{1-\alpha}\log\int_{p>0} q^\alpha p^{1-\alpha}d\nu-\frac{1}{\alpha}\log\int_{p>0} q^\alpha p^{1-\alpha}d\nu\\
=&\frac{1}{\alpha(\alpha-1)}\log\int_{p>0} q^\alpha p^{1-\alpha}d\nu=R_\alpha(Q\|P). \notag
\end{align}
\end{enumerate}
This completes the proof.
\end{proof}

\section{Proof of UQ Bounds for Risk-Sensitive QoIs}\label{app:UQ_proof}

Here we prove the UQ bound for risk-sensitive QoIs, Lemma \ref{thm:Renyi_UQ}, extending Proposition \ref{prop:UQ1} to $g$ that are valued in the extended reals:\\
\begin{proof}
 First suppose $\tau:\Omega\to[0,\infty)$.    If $\int \tau^c dP=\infty$ or $R_{c/(c-1)}(Q\|P)=\infty$ then \req{eq:UQ_upper} is trivial, so suppose not.
 
   Define $\tau_n=1/n 1_{\tau<1/n}+\tau 1_{\tau\geq 1/n}$.  These are positive and decrease pointwise to $\tau$.  Therefore,  Proposition \ref{prop:UQ1} with $g=\log(\tau_n)$ gives
  \begin{align}
  \log\int \tau_n dQ\leq \frac{1}{c}\log \int \tau_n^c dP+\frac{1}{c-1}R_{c/(c-1)}(Q\|P).
  \end{align}
  By Fatou's Lemma we have $\int \tau dQ\leq \lim_n \int\tau_n dQ$, therefore
  \begin{align}
  &\log\int \tau dQ\leq \log  \lim_n \int\tau_n dQ  \leq\liminf_n \left(\frac{1}{c}\log \int \tau_n^c dP+\frac{1}{c-1}R_{c/(c-1)}(Q\|P)\right).
  \end{align}
  We are in the case where $R_{c/(c-1)}(Q\|P)<\infty$ and $\tau^c\in L^1(P)$.  Therefore $\tau_n^c=1/n^c1_{\tau<1/n}+\tau^c1_{\tau\geq 1/n}\leq 1+\tau^c\in L^1(P)$. The dominated convergence theorem then implies $\int \tau_n^c dP\to \int \tau^c dP$ and hence
    \begin{align}
\log\int \tau dQ  \leq&  \frac{1}{c}\log \int \tau^c dP+\frac{1}{c-1}R_{c/(c-1)}(Q\|P).
  \end{align}
This proves \req{eq:UQ_upper} when  $\tau:\Omega\to[0,\infty)$.

To prove \req{eq:UQ_lower} for  $\tau$ valued in $[0,\infty)$ we need to show that
\begin{align}
\log\left[\int \tau dQ\right]\geq\frac{1}{c}\log \int \tau^cdP-\frac{1}{1-c}R_{1/(1-c)}(P\|Q)
\end{align}
for all $c<1$, $c\neq 0$. If $R_{1/(1-c)}(P\|Q)=\infty$ or $\int \tau dQ=\infty$ this is trivial, so suppose they are both finite.

Define $\tau_n$ as above. First let $c\in(0,1)$:\\
   $0\leq \tau_n\leq 1+\tau\in L^1(Q)$ (in the case we are considering) so the dominated convergence theorem, combined with Proposition \ref{prop:UQ1} and Fatou's Lemma, give
\begin{align}
\log\left[\int \tau dQ\right]=&\lim_n \log\left[\int \tau_n dQ\right]\\
\geq& \liminf_n\left( c^{-1}\log\int \tau_n^c dP-1/(1-c)R_{1/(1-c)}(P\|Q)\right)\notag\\
\geq&  c^{-1}\log \int\tau^c dP -1/(1-c)R_{1/(1-c)}(P\|Q).\notag
\end{align}
Similarly, for $c<0$ we have
\begin{align}
\log\left[\int \tau dQ\right]=&\lim_n \log\left[\int \tau_n dQ\right]\\
\geq&\limsup_n \left(\frac{1}{c}\log \int \tau_n^c dP-\frac{1}{1-c}R_{1/(1-c)}(P\|Q)\right)\notag\\
\geq&\frac{1}{c}\log \int \tau^c dP-\frac{1}{1-c}R_{1/(1-c)}(P\|Q),\notag
\end{align}
where we used $\tau_n^c\leq \tau^c$.

Now consider the general case $\tau:\Omega\to[0,\infty]$. Applying the above calculations to $\tau_m=\tau 1_{\tau\leq m}+m1_{\tau>m}$  (which are valued in $[0,\infty)$) gives
\begin{align}
&\log\left[\int \tau_m dQ\right]\leq\frac{1}{c}\log\left[ \int \tau_m^c dP\right]+\frac{1}{c-1}R_{c/(c-1)}(Q\|P),\,\,\,c>1,
\end{align}
where we interpret $-\infty+\infty=\infty$, and 
 \begin{align}
&\log\left[\int \tau_m dQ\right]\geq \frac{1}{c}\log\left[ \int \tau_m^cdP\right]-\frac{1}{1-c}R_{1/(1-c)}(P\|Q),\,\,\,c<1,\,\,c\neq 0,
\end{align}
where $\infty-\infty=-\infty$.

Taking $m\to\infty$ via the monotone convergence theorem gives
\begin{align}
&\log\left[\int \tau dQ\right]\leq\frac{1}{c}\log\left[ \int \tau^c dP\right]+\frac{1}{c-1}R_{c/(c-1)}(Q\|P)
\end{align}
for $c>1$ and 
\begin{align}
&\log\left[\int \tau dQ\right]\geq \frac{1}{c}\log\left[ \int \tau^cdP\right]-\frac{1}{1-c}R_{1/(1-c)}(P\|Q)
\end{align}
for $0<c<1$.

For $c<0$, the monotone convergence theorem gives 
\begin{align}
&\log\left[\int \tau dQ\right]\geq\limsup_m\left( \frac{1}{c}\log\left[ \int \tau_m^cdP\right]-\frac{1}{1-c}R_{1/(1-c)}(P\|Q)\right),
\end{align}
where  we use the convention $\infty-\infty\equiv-\infty$.  We have $\tau_m^c\searrow \tau^c$, hence  if $\int \tau^cdP<\infty$ then  $\tau_m^c\leq 1+\tau^c\in L^1(P)$ and so the dominated convergence theorem implies
\begin{align}
&\log\left[\int \tau dQ\right]\geq\frac{1}{c}\log\left[ \int \tau^cdP\right]-\frac{1}{1-c}R_{1/(1-c)}(P\|Q).
\end{align}
If  $\int \tau^cdP=\infty$ then the above bound trivially holds. This completes the proof.
\end{proof}

\section{Proofs of the Divergence and Tightness Properties}\label{app:tightness}
\noindent The following is a proof of the divergence property, Theorem \ref{thm:div_property}:\\
\begin{proof}
\req{eq:div_def_plus} and \req{eq:div_def_minus} follow directly from Proposition \ref{prop:duality} and \req{eq:Xi_pm_relation} follows from reindexing $\beta\to-\beta$ in the infimum.

To prove non-negativity of $\Xi_+^\gamma$, note that the cumulant generating function $\Lambda(c)\equiv\log\int e^{cg}dP$ is smooth and  convex (strictly convex if $g$ is not $P$-a.s. constant), hence its derivative is non-decreasing.  Therefore
\begin{align}
\frac{1}{c}\Lambda(c)=\int_0^1\Lambda^\prime(cr)dr.
\end{align}
is non-decreasing (strictly increasing if  $g$ is not $P$-a.s. constant)  in $c$. This, together with non-negativity of the R{\'e}nyi divergences, implies $\Xi_+^\gamma(Q\|P,g)\geq 0$. 

If $Q=P$ then $R_\alpha(Q\|P)=0$ for all $\alpha$, and so taking $\beta\searrow\gamma$ gives $\Xi_+^\gamma(Q\|P,g)=0$.

To complete the proof of the divergence property for $\Xi^\gamma_+$,  suppose $g$ is not $P$-a.s. constant and  $\Xi_+^\gamma(Q\|P,g)=0$.  By the definition of $\Xi_+^\gamma$, there must  exist $\beta_n>\gamma$, $\beta_n\neq 0$ with
\begin{align}
 \frac{1}{\beta_n} \log\left[\int e^{\beta_n g}dP \right]-\frac{1}{\gamma}\log\left[\int e^{\gamma g} dP\right]+\beta_n^{-1}\frac{\beta_n}{\beta_n-\gamma}R_{\beta_n/(\beta_n-\gamma)}(Q\|P)\to 0.
\end{align}
The fact that $c\to\frac{1}{c}\Lambda(c)$ is strictly increasing implies that, after taking a subsequence, we have $\beta_{n}\searrow \gamma$.  Therefore
\begin{align}\label{eq:div_property_temp1}
0=&\lim_{n\to\infty}\frac{1}{\beta_{n}} \log\left[\int e^{\beta_{n} g}dP \right]-\frac{1}{\gamma}\log\left[\int e^{\gamma g} dP\right]+\beta_{n}^{-1}\frac{\beta_{n}}{\beta_{n}-\gamma}R_{\beta_{n}/(\beta_{n}-\gamma)}(Q\|P)\\
=&\lim_{n\to\infty} \frac{\beta_{n}}{\beta_{n}-\gamma}R_{\beta_{n}/(\beta_{n}-\gamma)}(Q\|P).\notag
\end{align}
By \req{eq:Renyi_nondec} we see that, if $\gamma>0$, the sequence  $\frac{\beta_{n}}{\beta_{n}-\gamma}R_{\beta_{n}/(\beta_{n}-\gamma)}(Q\|P)$ is nondecreasing.  Combined with \req{eq:div_property_temp1} and non-negativity, we find  $R_{\beta_{1}/(\beta_{1}-\gamma)}(Q\|P)=0$.  The divergence property for R{\'e}nyi, \req{eq:Renyi_div_property}, then gives $Q=P$.

If  $\gamma<0$, first use \req{eq:R_neg_alpha} to rewrite  \req{eq:div_property_temp1} as
\begin{align}
0=&\lim_{n\to\infty} \frac{\beta_{n}}{\beta_{n}-\gamma}R_{\beta_{n}/(\beta_{n}-\gamma)}(Q\|P)=-\lim_{n\to\infty} \frac{1}{1-\beta_{n}/\gamma}R_{1/(1-\beta_{n}/\gamma)}(P\|Q).
\end{align}
Taking $n$ large enough that $\beta_{n}<0$ we see that $n\to \frac{1}{1-\beta_{n}/\gamma}R_{1/(1-\beta_{n}/\gamma)}(P\|Q)$ is nondecreasing, and so
\begin{align}
0=\lim_{n\to\infty} \frac{1}{1-\beta_{n_k}/\gamma}R_{1/(1-\beta_{n_k}/\gamma)}(P\|Q)\geq \frac{1}{1-\beta_{n_1}/\gamma}R_{1/(1-\beta_{n_1}/\gamma)}(P\|Q)\geq 0.
\end{align}
Hence $R_{1/(1-\beta_{n_1}/\gamma)}(P\|Q)=0$ and therefore $P=Q$. Finally, the divergence property for $\Xi_-^\gamma$  follows from \req{eq:Xi_pm_relation}.

\end{proof}

\noindent We now prove the tightness results.  These calculations are similar to proving the equality case from  Proposition \ref{prop:UQ1}. First we give a proof of Theorem \ref{thm:tightness1}:\\
\begin{proof}

\begin{enumerate}
\item  Let $\gamma>0$. Using \req{eq:UQ_upper} from Lemma \ref{thm:Renyi_UQ}, bounding the infimum by the value at $c=1+\gamma$, and then using the definition of $\mathcal{U}^\gamma_{1+\gamma^{-1}}(P)$ we have
\begin{align}
\sup_{Q\in \mathcal{U}^\gamma_{1+\gamma^{-1}}(P)} \log\left[\int \tau dQ\right]
\leq&\sup_{Q\in \mathcal{U}^\gamma_{1+\gamma^{-1}}(P)} \inf_{c>1}\left\{\frac{1}{c}\log\left[ \int \tau^c dP\right]+\frac{1}{c-1}R_{c/(c-1)}(Q||P)\right\}\\
\leq&\sup_{Q\in \mathcal{U}^\gamma_{1+\gamma^{-1}}(P)}\left\{ \frac{1}{1+\gamma}\log\left[ \int \tau^{1+\gamma} dP\right]+\frac{1}{\gamma}R_{1+1/\gamma}(Q||P)\right\}\notag\\
\leq& \frac{1}{1+\gamma}\log\left[ \int \tau^{1+\gamma} dP\right]+\frac{1}{\gamma}R_{1+1/\gamma}(Q_\gamma||P).\notag
\end{align}
Direct calculation, using \req{eq:Renyi_formula}, gives
\begin{align}\label{eq:tight_temp1}
&\frac{1}{1+\gamma} \log\left[\int \tau^{1+\gamma} dP \right]+\frac{1}{\gamma}R_{1+1/\gamma}(Q_\gamma|P)=\log\left[\int\tau dQ_\gamma\right]\leq \sup_{Q\in \mathcal{U}^\gamma_{1+\gamma^{-1}}(P)} \log\left[\int \tau dQ\right].
\end{align}

Using Corollary  \ref{thm:Renyi_UQ} again yields
\begin{align}
\log\left[\int\tau dQ_\gamma\right]\leq&\inf_{c>1}\left\{\frac{1}{c}\log\left[ \int \tau^c dP\right]+\frac{1}{c-1}R_{c/(c-1)}(Q_\gamma||P)\right\}\\
\leq&\frac{1}{1+\gamma}\log\left[ \int \tau^{1+\gamma} dP\right]+\frac{1}{\gamma}R_{1+1/\gamma}(Q_\gamma||P).\notag
\end{align}
Combining these with \req{eq:U_inclusion} gives the claimed result, \req{eq:tightness1_1}.

\item  Let $\gamma\in(-1,0)$. Similarly to the previous case, using \req{eq:UQ_lower} from Lemma \ref{thm:Renyi_UQ}, bounding the supremum below by the value at $c=1+\gamma$, and using the definition of $\mathcal{U}^\gamma_{1+\gamma^{-1}}(P)$ along with \req{eq:R_neg_alpha}, we find
\begin{align}\label{eq:tight_temp2}
\inf_{Q\in \mathcal{U}^\gamma_{1+\gamma^{-1}}(P)} \log\left[\int \tau dQ\right]\geq&\inf_{Q\in \mathcal{U}^\gamma_{1+\gamma^{-1}}(P)} \sup_{c<1,c\neq 0}\left\{\frac{1}{c}\log\left[ \int \tau^cdP\right]-\frac{1}{1-c}R_{1/(1-c)}(P||Q)\right\}\\
\geq&\frac{1}{1+\gamma}\log\left[ \int \tau^{1+\gamma}dP\right]-\frac{1}{-\gamma}R_{1+1/\gamma}(Q_\gamma||P)\notag\\
=&\frac{1}{1+\gamma}\log\left[ \int \tau^{1+\gamma}dP\right]-\frac{1}{-\gamma}R_{-\gamma^{-1}}(P\|Q_\gamma).\notag
\end{align}

We assumed $E_P[\tau^\gamma]<\infty$ so $P(\tau=0)=0$.  We also assumed $P(\tau=\infty)=0$.  Therefore $P\ll Q_\gamma$ and we can use Lemma \ref{thm:Renyi_UQ} and the R{\'e}nyi entropy formula \ref{eq:Renyi_formula} to compute
\begin{align}\label{eq:tight_temp3}
&\log\left[\int \tau dQ_\gamma\right]\geq \sup_{c<1,c\neq 0}\left\{\frac{1}{c}\log\left[ \int \tau^cdP\right]-\frac{1}{1-c}R_{1/(1-c)}(P||Q_\gamma)\right\}\\
\geq& \frac{1}{\gamma+1}\log\left[ \int \tau^{\gamma+1}dP\right]-\frac{1}{-\gamma}R_{-\gamma^{-1}}(P||Q_\gamma)\notag\\
=& \frac{1}{\gamma+1}\log\left[ \int \tau^{\gamma+1}dP\right]+\frac{\gamma}{1+\gamma}\log\left[\int\tau^{\gamma+1}dP\right]-\log E_P[\tau^\gamma]\notag\\
=& \log\left[\int\tau dQ_\gamma\right].\notag
\end{align}
Here, it was critical that we assumed $\int \tau^{\gamma+1}dP<\infty$, so as to avoid the $\infty-\infty\equiv-\infty$ case.

Combining \req{eq:tight_temp2} with  \req{eq:tight_temp3} and  \req{eq:U_inclusion}   yields the claimed result, \req{eq:tightness1_2}.

\item The proof of \req{eq:tightness1_3} is very similar to the above proof of \req{eq:tightness1_2}; we omit the details.
\end{enumerate}
\end{proof}

\noindent  Finally, we give a proof of Theorem \ref{thm:tightness2}:\\
\begin{proof}
\begin{enumerate}
\item Let $\nu>0$. Applying \req{eq:UQ_upper} from Lemma \ref{thm:Renyi_UQ} and bounding the infimum above by the value at $c=1+\nu^{-1}>1$ gives
\begin{align}
\log\left[\int \phi_\nu dQ\right]\leq& \inf_{c>1}\left\{\frac{1}{c}\log\left[\int \phi_\nu^cdP\right]+\frac{1}{c-1}R_{c/(c-1)}(Q\|P)\right\}\\
\leq&\frac{1}{1+\nu^{-1}}\log\left[\int \phi_\nu^{1+\nu^{-1}}dP\right]+\frac{1}{\nu^{-1}}R_{(1+\nu^{-1})\nu}(Q\|P)\notag\\
=&\frac{\nu}{\nu+1}\log\left[\int \phi_\nu^{1+\nu^{-1}}dP\right]+\frac{1}{\nu+1}\log\left[\int (dQ/dP)^{\nu+1}dP\right]\notag\\
=&\log\left[\int\phi_\nu dQ\right].\notag
\end{align} 
Note that this holds even if $\int (dQ/dP)^{\nu}dQ=\infty$.

We similarly have
\begin{align}
\sup_{\tau\in \mathcal{T}^{+}_{1+\nu^{-1}}(\phi_\nu)}\log\left[\int \tau dQ\right]\leq&\sup_{\tau\in \mathcal{T}^{+}_{1+\nu^{-1}}(\phi_\nu)} \inf_{c>1}\left\{\frac{1}{c}\log\left[\int\tau^c dP\right]+\frac{1}{c-1}R_{c/(c-1)}(Q\|P)\right\}\notag\\
\leq&\sup_{\tau\in \mathcal{T}^{+}_{1+\nu^{-1}}(\phi_\nu)}\left\{\frac{1}{1+\nu^{-1}}\log\left[\int\tau^{1+\nu^{-1}} dP\right]+\frac{1}{\nu^{-1}}R_{(1+\nu^{-1})\nu}(Q\|P)\right\}\notag\\
\leq& \frac{1}{1+\nu^{-1}}\log\left[\int \phi_\nu^{1+\nu^{-1}}dP\right]+\frac{1}{\nu^{-1}}R_{(1+\nu^{-1})\nu}(Q\|P)\notag\\
=&\log\left[\int \phi_\nu dQ\right]\leq \sup_{\tau\in\mathcal{T}^{+}_{1+\nu^{-1}}(\phi_\nu)}\log\left[\int \tau dQ\right].\notag
\end{align}
Combining these proves \req{eq:tightness2_1}.

\item  Now let $\nu>1$, so that $0<1-1/\nu<1$. Using \req{eq:UQ_lower} from Lemma \ref{thm:Renyi_UQ} and then proceeding similarly to the previous case, we find
\begin{align}
\log\left[\int \rho_\nu dQ\right]\geq&\inf_{\tau\in\mathcal{T}^{-}_{1-1/\nu}(\rho_\nu)}\log\left[\int \tau dQ\right]\\
\geq&\inf_{\tau\in\mathcal{T}^{-}_{1-1/\nu}(\rho_\nu)}\sup_{c<1,c\neq 0}\left\{\frac{1}{c}\log\left[ \int \tau^cdP\right]-\frac{1}{1-c}R_{1/(1-c)}(P||Q)\right\}\notag\\
\geq&\inf_{\tau\in\mathcal{T}^{-}_{1-1/\nu}(\rho_\nu)}\left\{\frac{1}{1-1/\nu}\log\left[ \int \tau^{1-1/\nu}dP\right]-\frac{1}{1-(1-1/\nu)}R_{1/(1-(1-1/\nu))}(P||Q)\right\}\notag\\
\geq&\frac{1}{1-1/\nu}\log\left[ \int \rho_\nu^{1-1/\nu}dP\right]-\frac{1}{1-(1-1/\nu)}R_{1/(1-(1-1/\nu))}(P||Q)\notag\\
=&\frac{1}{1-1/\nu}\log\left[ \int (dP/dQ)^{\nu-1} dP\right]-\frac{1}{\nu-1}\log\int (dP/dQ)^{\nu-1} dP\notag\\
=&\log\left[ \int \rho_\nu dQ\right].
 \end{align}
In obtaining the last line, we used the assumption that $\int (dP/dQ)^{\nu-1} dP<\infty$.

Again using \req{eq:UQ_lower} from Lemma \ref{thm:Renyi_UQ} we find
\begin{align}
\log\left[ \int \rho_\nu dQ\right]\geq&\sup_{c<1,c\neq 0}\left\{\frac{1}{c}\log\left[\int\rho_\nu^cdP\right]-\frac{1}{1-c}R_{1/(1-c)}(P\|Q)\right\}\\
\geq&\frac{1}{1-1/\nu}\log\left[\int\rho_\nu^{1-1/\nu}dP\right]-\frac{1}{1-(1-1/\nu)}R_{1/(1-(1-1/\nu))}(P\|Q).\notag
 \end{align}
 
 Combining these proves \req{eq:tightness2_2}.
 \item  The proof of \req{eq:tightness2_3} is very similar to the above proofs of  \req{eq:tightness2_1} and \req{eq:tightness2_2}; we omit the details.

\end{enumerate}
\end{proof}

\section{Ambiguity Sets Arising from Classical MGF Bounds}\label{app:classical_CGF}
In Section \ref{sec:MGF_ambiguity} we saw that there is correspondence between tail behavior and the choice of $\Lambda$ defining the ambiguity set \ref{eq:ambiguity_Lambda_Def}. Here we give examples of different $\Lambda$'s, coming from classical MGF bounds and discuss the relations between the resulting ambiguity sets.
\begin{enumerate}

\item Sub-Gaussian:  Define the ambiguity set $\mathcal{U}_{\eta,\sigma}^{SG}(P)$ via:
\begin{align}\label{eq:sub_gauss}
\Lambda^{SG}_{\eta,\sigma}(\lambda)= \eta \lambda+\frac{\sigma^2}{2}\lambda^2,\,\,\,\lambda\geq 0.
\end{align}

\begin{remark}
Note that $\Lambda^{SG}_{\eta,\sigma}$ is the CGF of a normal distribution with mean $\eta$ and variance $\sigma^2$.
\end{remark}

\item Sub-gamma (i.e., Bernstein): Define $\mathcal{U}^{Bern}_{\eta,\sigma,M}(P)$ via
\begin{align}\label{eq:Bernstein}
&\Lambda^{Bern}_{\eta,\sigma,M}(\lambda)= \eta\lambda+\frac{\sigma^2/2}{1-M\lambda}\lambda^2,\,\,\,0\leq \lambda<1/M.
\end{align}

\item  Worst Case Regret: Define $\mathcal{U}_{b}^{WCR}(P)$ via:
\begin{align}
&\Lambda^{WCR}_b(\lambda)=  \lambda b,\,\,\,\lambda\geq 0.
\end{align}

\item Bennett-$(a,b)$:  Define $\Lambda^{B(a,b)}_{\eta}(\lambda)$ via:
\begin{align}\label{eq:Bennett_ab_CGF}
&\Lambda^{B(a,b)}_{\eta}(\lambda)= \lambda b+\log\left( \frac{b-\eta}{b-a}e^{-\lambda (b-a)}+\frac{\eta-a}{b-a}\right),\,\,\,\lambda\geq 0,\,\,a<b,\,\,\eta\in[a,b].
\end{align}

\begin{remark}
$\Lambda^{B(a,b)}_\eta$ is the CGF of  the  following probability measure, which has mean $\eta$  and is supported on $[a,b]$:
\begin{equation}\label{eq:nu_def}
\nu=\frac{b-\eta}{b-a}\delta_a+\frac{\eta-a}{b-a}\delta_b.
\end{equation}

\end{remark}

\item Bennett: Define $\mathcal{U}_{b,\eta,\sigma}^B(P)$ via:
\begin{align}\label{eq:Bennet}
&\Lambda^B_{b,\eta,\sigma}(\lambda)= \lambda b+\log\left( \frac{(b-\eta)^2}{(b-\eta)^2+\sigma^2}e^{-\lambda(\sigma^2/(b-\eta)+(b-\eta))}+\frac{\sigma^2}{(b-\eta)^2+\sigma^2}\right),\,\,\,\lambda\geq 0,\,\,\sigma>0,\,\,\eta< b.
\end{align}

\begin{remark}
$\Lambda^B_{b,\eta,\sigma}$ is the CGF the following probability measure, which has mean $\eta$,  variance $\sigma^2$, and is supported on $(-\infty,b]$:
\begin{align}
\nu=\frac{\sigma^2}{(b-\eta)^2+\sigma^2}\delta_b+\frac{(b-\eta)^2}{(b-\eta)^2+\sigma^2}\delta_{\eta-\sigma^2/(b-\eta)}.
\end{align}

\end{remark}
\end{enumerate}
  For further background on these CGF bounds and their applications, see \cite{BLM,dembo2009large,wainwright2019high}.

We have the following inclusions between these ambiguity sets:
\begin{lemma}\label{lemma:inclusion1} $\mathcal{U}_{\eta,\sigma}^{SG}(P)\subset \mathcal{U}^{Bern}_{\eta,\sigma,M}(P)$.
\end{lemma}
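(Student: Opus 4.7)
The plan is to reduce the claimed inclusion to a pointwise comparison of the two defining CGF upper bounds. From Definition \ref{def:U_Lambda}, if $\Lambda_1$ and $\Lambda_2$ both satisfy $\Lambda_i(0)=0$ and are finite on a neighborhood of $0$, and if $\Lambda_1(\lambda)\le \Lambda_2(\lambda)$ for all $\lambda>0$ (with the convention that $\Lambda_2=+\infty$ outside its finiteness interval), then
\begin{align*}
\Lambda_Q^{\log(dQ/dP)}(\lambda)\le \Lambda_1(\lambda)\le \Lambda_2(\lambda)\qquad\text{for all }\lambda>0,
\end{align*}
so $\mathcal{U}^{\Lambda_1}(P)\subset \mathcal{U}^{\Lambda_2}(P)$. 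Thus it suffices to verify the pointwise inequality $\Lambda^{SG}_{\eta,\sigma}(\lambda)\le \Lambda^{Bern}_{\eta,\sigma,M}(\lambda)$.

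First I would handle the range $\lambda\ge 1/M$: there $\Lambda^{Bern}_{\eta,\sigma,M}(\lambda)=+\infty$ by the extension convention in Definition \ref{def:U_Lambda}, so the inequality is immediate. For $\lambda\in[0,1/M)$, I would subtract the two expressions \eqref{eq:sub_gauss} and \eqref{eq:Bernstein}; the linear terms $\eta\lambda$ cancel, and the quadratic terms combine to
\begin{align*}
\Lambda^{Bern}_{\eta,\sigma,M}(\lambda)-\Lambda^{SG}_{\eta,\sigma}(\lambda)
=\frac{\sigma^2}{2}\lambda^2\left(\frac{1}{1-M\lambda}-1\right)
=\frac{\sigma^2\, M\,\lambda^3}{2(1-M\lambda)}\ge 0,
\end{align*}
since $M,\sigma^2\ge 0$ and $1-M\lambda>0$ on this range.

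This closes out both cases and establishes $\Lambda^{SG}_{\eta,\sigma}\le \Lambda^{Bern}_{\eta,\sigma,M}$ on $[0,\infty)$, yielding the inclusion. There is no real obstacle here; the only thing to be careful about is to invoke the $+\infty$-extension convention from Definition \ref{def:U_Lambda} so that the comparison is valid on all of $[0,\infty)$ rather than only on the interval of finiteness of $\Lambda^{Bern}$.
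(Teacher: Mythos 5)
Your proof is correct; the paper states this lemma without proof, and your pointwise comparison $\Lambda^{SG}_{\eta,\sigma}\le \Lambda^{Bern}_{\eta,\sigma,M}$ (using $\tfrac{1}{1-M\lambda}\ge 1$ on $[0,1/M)$ and the $+\infty$-extension beyond $1/M$) is exactly the intended one-line argument, the same mechanism used in the paper's proofs of the neighboring inclusion lemmas. Nothing to add.
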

\begin{lemma}\label{lemma:inclusion2}
For $a,b,\eta$ with $b\geq 0$, $a<b$, $a\leq \eta\leq b$, we have $\mathcal{U}^{B(a,b)}_{\eta}(P)\subset \mathcal{U}^{WCR}_{b}(P)$.
\end{lemma}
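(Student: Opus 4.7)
The plan is to reduce the inclusion to a pointwise inequality between the two defining $\Lambda$ functions. Directly from Definition \ref{def:U_Lambda}, if $\Lambda_1(\lambda) \leq \Lambda_2(\lambda)$ for all $\lambda \geq 0$, then $\mathcal{U}^{\Lambda_1}(P) \subset \mathcal{U}^{\Lambda_2}(P)$ for every baseline $P$. So it suffices to show
\begin{equation}
\Lambda^{B(a,b)}_\eta(\lambda) \leq \Lambda^{WCR}_b(\lambda) = \lambda b, \qquad \lambda \geq 0.
\end{equation}

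Substituting the explicit formula \req{eq:Bennett_ab_CGF}, the inequality reduces to showing
\begin{equation}
\log\!\left(\frac{b-\eta}{b-a}\,e^{-\lambda(b-a)} + \frac{\eta-a}{b-a}\right) \leq 0,
\end{equation}
or equivalently that the argument of the logarithm is at most $1$. Since $a \leq \eta \leq b$ and $a < b$, the coefficients $\tfrac{b-\eta}{b-a}$ and $\tfrac{\eta-a}{b-a}$ are non-negative and sum to $1$, so the expression is simply a convex combination of $e^{-\lambda(b-a)}$ and $1$. For $\lambda \geq 0$ and $b-a>0$ we have $e^{-\lambda(b-a)} \leq 1$, hence the convex combination is bounded above by $1$, as required. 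The condition $b \geq 0$ is used only to ensure that $\Lambda^{WCR}_b$ is non-negative on $[0,\infty)$ and therefore defines an admissible $\Lambda$ in the sense of Definition \ref{def:U_Lambda}.

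There is no real obstacle here; the argument is essentially a one-line verification once the pointwise reduction is made. The only thing to take care of is to justify the reduction step — namely that a pointwise ordering of the CGF upper bounds produces an inclusion of the associated ambiguity sets — which follows immediately by unpacking \req{eq:ambiguity_Lambda_Def}. The same template (pointwise comparison of the defining $\Lambda$'s) should also handle the companion Lemma \ref{lemma:inclusion1} and presumably the other inclusion lemmas referenced in Figure \ref{fig:bound_comparison}.
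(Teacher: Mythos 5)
Your proof is correct and follows essentially the same route as the paper: both reduce the inclusion to the pointwise bound $\Lambda^{B(a,b)}_\eta(\lambda)\leq \lambda b$ and verify it by noting that the argument of the logarithm is a convex combination of $e^{-\lambda(b-a)}$ and $1$, hence at most $1$. Your explicit justification of the reduction step (pointwise ordering of the $\Lambda$'s implies inclusion of the ambiguity sets) is a small but welcome addition that the paper leaves implicit.
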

\begin{proof}
This follows from 
\begin{align}
\frac{b-\eta}{b-a}e^{-(  b-a)(\alpha-1)}+\frac{\eta-a}{b-a}\leq 1.
\end{align}
\end{proof}

\begin{lemma}\label{lemma:inclusion3}
For $a,b,\eta$ with $a<b$, $a\leq \eta\leq b$ we  have $\mathcal{U}^{B(a,b)}_{\eta}(P)\subset \mathcal{U}^{SG}_{\eta,(b-a)/2}(P)$.
\end{lemma}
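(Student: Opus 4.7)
The plan is to reduce the inclusion to the pointwise inequality $\Lambda^{B(a,b)}_{\eta}(\lambda) \leq \Lambda^{SG}_{\eta,(b-a)/2}(\lambda)$ for all $\lambda \geq 0$, which is essentially Hoeffding's lemma applied to the two-point worst-case distribution used to define the Bennett-$(a,b)$ bound. By the definition \eqref{eq:ambiguity_Lambda_Def}, once this pointwise bound on the defining $\Lambda$'s is established, the containment $\mathcal{U}^{B(a,b)}_{\eta}(P)\subset \mathcal{U}^{SG}_{\eta,(b-a)/2}(P)$ follows immediately.

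First I would observe, as in the remark following \eqref{eq:Bennett_ab_CGF}, that
\begin{align}
\Lambda^{B(a,b)}_{\eta}(\lambda)=\log\int e^{\lambda y}\,\nu(dy),
\end{align}
where $\nu$ is the two-point measure in \eqref{eq:nu_def}, which has mean $\eta$ and support $[a,b]$. Explicitly, after factoring $e^{\lambda b}$ out, the right-hand side of \eqref{eq:Bennett_ab_CGF} is exactly $\log\bigl(\tfrac{b-\eta}{b-a}e^{\lambda a}+\tfrac{\eta-a}{b-a}e^{\lambda b}\bigr)$.

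Next I would invoke Hoeffding's lemma: if $Y$ is a random variable with $E[Y]=\eta$ and $Y\in[a,b]$ almost surely, then for every $\lambda\geq 0$,
\begin{align}
\log E[e^{\lambda(Y-\eta)}]\leq \frac{\lambda^2(b-a)^2}{8}.
\end{align}
Applied to $Y\sim\nu$, this yields
\begin{align}
\Lambda^{B(a,b)}_{\eta}(\lambda)\leq \eta\lambda+\frac{\lambda^2(b-a)^2}{8}=\Lambda^{SG}_{\eta,(b-a)/2}(\lambda),\qquad \lambda\geq 0,
\end{align}
using $\sigma=(b-a)/2$ in \eqref{eq:sub_gauss}. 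Thus any $Q$ with $\Lambda_Q^{\log(dQ/dP)}(\lambda)\leq \Lambda^{B(a,b)}_{\eta}(\lambda)$ also satisfies $\Lambda_Q^{\log(dQ/dP)}(\lambda)\leq \Lambda^{SG}_{\eta,(b-a)/2}(\lambda)$, which gives the desired inclusion.

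There is essentially no obstacle here: the only thing to watch is that Hoeffding's lemma is classically stated for centered variables, so one must cleanly separate the mean $\eta$ before applying it. If one prefers a self-contained derivation (rather than citing Hoeffding), the standard route is to note that $\log E[e^{\lambda(Y-\eta)}]$ as a function of $\lambda$ vanishes together with its derivative at $\lambda=0$, and its second derivative equals the variance of $Y$ under the tilted measure $e^{\lambda(Y-\eta)}\nu(dY)/Z$, which is bounded by $(b-a)^2/4$ since any random variable supported on $[a,b]$ has variance at most $(b-a)^2/4$; integrating twice in $\lambda$ gives the sub-Gaussian bound.
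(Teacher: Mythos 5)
Your proof is correct and follows essentially the same route as the paper's: both identify $\Lambda^{B(a,b)}_{\eta}$ as the cumulant generating function of the two-point measure $\nu$ from \req{eq:nu_def} and then apply Hoeffding's lemma to that measure (supported on $[a,b]$ with mean $\eta$) to obtain the sub-Gaussian bound $\eta\lambda+\lambda^2(b-a)^2/8=\Lambda^{SG}_{\eta,(b-a)/2}(\lambda)$. The only cosmetic difference is that the paper phrases the computation in the variable $\alpha=\lambda+1$ and divides through by $\alpha(\alpha-1)$, while you work directly with $\lambda$ and the pointwise CGF comparison.
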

\begin{proof} 
For $\alpha>1$ we have
\begin{align}
&\frac{b}{\alpha}   +\frac{1}{\alpha(\alpha-1)}\log\left(\frac{b-\eta}{b-a}e^{-(  b-a)(\alpha-1)}+\frac{\eta-a}{b-a}\right)=\frac{1}{\alpha(\alpha-1)}  \log\left( \int e^{(\alpha-1)f}d\nu \right),
\end{align}
where $f=a1_a+b1_b$ and $\nu$ is defined by \req{eq:nu_def}. $f$ is a bounded measurable function with $a\leq X\leq b$ and $E_\nu[X]=\eta$, so Hoeffding's Lemma gives
\begin{align}
&\frac{b}{\alpha}   +\frac{1}{\alpha(\alpha-1)}\log\left(\frac{b-\eta}{b-a}e^{-(  b-a)(\alpha-1)}+\frac{\eta-a}{b-a}\right)\leq\frac{1}{\alpha} (\eta+(\alpha-1)(b-a)^2/8).
\end{align}
This proves the claimed inclusion.
\end{proof}

\begin{lemma}\label{lemma:inclusion4}
If $a,b,\eta,\sigma$ satisfy $a<\eta<b$, $0<\sigma^2\leq (b-\eta)(\eta-a)$ then we have $\mathcal{U}^{B}_{b,\eta,\sigma}(P)\subset\mathcal{U}^{B(a,b)}_{\eta}(P)$.
\end{lemma}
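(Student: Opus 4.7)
The plan is to read off both $\Lambda^{B(a,b)}_\eta$ and $\Lambda^B_{b,\eta,\sigma}$ as the cumulant generating functions of explicit two-point distributions on the real line (these are exactly the measures $\nu_1$ and $\nu_2$ identified in the remarks following \req{eq:Bennett_ab_CGF} and \req{eq:Bennet}), then reduce the inclusion to the single pointwise inequality
\begin{align*}
\Lambda^B_{b,\eta,\sigma}(\lambda)\le \Lambda^{B(a,b)}_\eta(\lambda),\qquad \lambda\ge 0,
\end{align*}
which by the definition \req{eq:ambiguity_Lambda_Def} immediately implies the corresponding containment of ambiguity sets.

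First I would rewrite each $\Lambda$ as a log-MGF. Algebraic manipulation of \req{eq:Bennett_ab_CGF} gives $\Lambda^{B(a,b)}_\eta(\lambda)=\log E_{\nu_1}[e^{\lambda X}]$ with
\begin{align*}
\nu_1=\frac{b-\eta}{b-a}\delta_a+\frac{\eta-a}{b-a}\delta_b,
\end{align*}
and similarly \req{eq:Bennet} gives $\Lambda^B_{b,\eta,\sigma}(\lambda)=\log E_{\nu_2}[e^{\lambda X}]$ with
\begin{align*}
\nu_2=\frac{(b-\eta)^2}{(b-\eta)^2+\sigma^2}\delta_{c}+\frac{\sigma^2}{(b-\eta)^2+\sigma^2}\delta_{b},\qquad c\equiv \eta-\frac{\sigma^2}{b-\eta}.
\end{align*}
Both measures have mean $\eta$ (this is the content of the remarks). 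The standing hypothesis $\sigma^2\le (b-\eta)(\eta-a)$ is equivalent to $c\ge a$, hence $\nu_2$ is supported on $[a,b]$.

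Next I would apply the standard chord bound for the convex function $x\mapsto e^{\lambda x}$ on $[a,b]$: for every $x\in[a,b]$,
\begin{align*}
e^{\lambda x}\le \frac{b-x}{b-a}e^{\lambda a}+\frac{x-a}{b-a}e^{\lambda b}.
\end{align*}
Integrating this inequality against $\nu_2$ and using $E_{\nu_2}[X]=\eta$ yields
\begin{align*}
E_{\nu_2}[e^{\lambda X}]\le \frac{b-\eta}{b-a}e^{\lambda a}+\frac{\eta-a}{b-a}e^{\lambda b}=E_{\nu_1}[e^{\lambda X}],
\end{align*}
so taking logarithms gives $\Lambda^B_{b,\eta,\sigma}(\lambda)\le \Lambda^{B(a,b)}_\eta(\lambda)$ for all $\lambda\ge 0$. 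By Definition \ref{def:U_Lambda}, any $Q$ satisfying $\Lambda_Q^{\log(dQ/dP)}(\lambda)\le \Lambda^B_{b,\eta,\sigma}(\lambda)$ then also satisfies $\Lambda_Q^{\log(dQ/dP)}(\lambda)\le \Lambda^{B(a,b)}_\eta(\lambda)$, i.e., $\mathcal{U}^B_{b,\eta,\sigma}(P)\subset\mathcal{U}^{B(a,b)}_\eta(P)$.

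I expect no real obstacle here: once the two CGFs are recognized as those of explicit two-point laws, the argument collapses to the convexity-of-exponential bound. The only thing one must be careful about is verifying that the support condition $c\ge a$ really is equivalent to $\sigma^2\le (b-\eta)(\eta-a)$, since this is where the hypothesis enters and, without it, $\nu_2$ would not fit in $[a,b]$ and the chord bound would be inapplicable.
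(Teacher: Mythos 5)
Your proof is correct, but it takes a different route from the paper's. The paper's argument is a one-line monotonicity claim: it asserts that the bracketed expression in \req{eq:Bennet}, viewed as a function of $\sigma^2$ with $b,\eta,\lambda$ fixed, is non-decreasing, and then uses the fact that at the extreme value $\sigma^2=(b-\eta)(\eta-a)$ the Bennett CGF $\Lambda^B_{b,\eta,\sigma}$ reduces exactly to $\Lambda^{B(a,b)}_\eta$ (one checks $(b-\eta)^2+\sigma^2=(b-\eta)(b-a)$ and $\sigma^2/(b-\eta)+(b-\eta)=b-a$), so that smaller $\sigma^2$ gives a smaller $\Lambda$ and hence a smaller ambiguity set. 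That monotonicity is true but is merely asserted in the paper; verifying it amounts to the elementary inequality $1-(1+u)e^{-u}\geq 0$ after differentiating in $\sigma^2$. Your argument instead identifies both $\Lambda$'s as log-MGFs of the two-point laws $\nu_1,\nu_2$ from the remarks, checks that the hypothesis $\sigma^2\leq(b-\eta)(\eta-a)$ is exactly the condition $\eta-\sigma^2/(b-\eta)\geq a$ placing $\nu_2$ inside $[a,b]$, and then applies the chord bound for the convex function $e^{\lambda x}$ together with $E_{\nu_2}[X]=\eta$. This is complete and arguably more illuminating: it is the same mechanism that underlies Lemma 2.4.1 of \cite{dembo2009large} and explains \emph{why} the extremal case is the two-point law on $\{a,b\}$ (it maximizes the MGF among mean-$\eta$ laws on $[a,b]$), whereas the paper's route is shorter once the monotonicity is granted but leaves that computation to the reader. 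Both correctly conclude via the pointwise ordering of the $\Lambda$'s and Definition \ref{def:U_Lambda}.
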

\begin{proof}
This follows from  the fact that the following function is  non-decreasing:
\begin{align}
&\sigma^2\to\frac{(b-\eta)^2}{(b-\eta)^2+\sigma^2}e^{-(\alpha-1)(\sigma^2/(b-\eta)+(b-\eta))}+\frac{\sigma^2}{(b-\eta)^2+\sigma^2}.
\end{align}

\end{proof}
\begin{remark}
The bound $\sigma^2\leq (b-\eta)(\eta-a)$ is motivated by the fact that the maximum variance of a random variable which is bounded between $a$ and $b$ and has mean $\eta$ is $(b-\eta)(\eta-a)$.
\end{remark}

\section{Ambiguity Sets from Two-Sided Tail Bound}\label{sec:2_sided_tail}

An alternative to defining ambiguity sets via Definition \ref{def:tail_decay_ambiguity_set} is to specify a two-sided bound on the deviation of the log-likelihood from the mean.  This leads leads to bounds on the moments and hence on the MGF; the proof of the following is similar to that of Lemma \ref{lemma:tail}:
\begin{lemma}\label{lemma:2-sided-tail}
Suppose $R(Q\|P)<\infty$ and we have  a measurable $G:[0,\infty)\to[0,\infty)$ with
\begin{align}\label{eq:2-sided-tail}
Q\left(|\log(dQ/dP)-R(Q\|P)|\geq r\right)\leq G(r),\,\,r\geq 0.
\end{align}
Then
\begin{align}\label{eq:moment_bound}
E\left[\left|\log(dQ/dP)-R(Q\|P)\right|^k\right]\leq \int_0^\infty G(z) kz^{k-1}dz, \,\,\,k\in\mathbb{Z},\,k\geq 2.
\end{align}
\end{lemma}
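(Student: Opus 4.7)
The plan is to apply the standard layer-cake / Fubini representation of the moments of a non-negative random variable to $|Y|^k$, where $Y \equiv \log(dQ/dP) - R(Q\|P)$, and then invoke the assumed tail bound $Q(|Y|\geq r)\leq G(r)$ pointwise under the integral. Since $R(Q\|P)<\infty$, the random variable $Y$ is well-defined $Q$-a.s., so $|Y|^k$ is a non-negative measurable function on $(\Omega,\mathcal{M})$ and the layer-cake formula applies without integrability caveats.

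First I would write
\begin{align*}
E_Q\!\left[|Y|^k\right]=\int_0^\infty Q\!\left(|Y|^k\geq t\right)dt
=\int_0^\infty Q\!\left(|Y|\geq t^{1/k}\right)dt,
\end{align*}
using that $|Y|^k\geq t$ iff $|Y|\geq t^{1/k}$ for $t\geq 0$ and $k\geq 2$. The first equality is the usual Fubini computation: $E_Q[Z]=\int_0^\infty Q(Z\geq t)\,dt$ for non-negative $Z$, which is obtained by writing $Z=\int_0^\infty 1_{Z\geq t}\,dt$ and swapping the order of integration (the distinction between $\geq$ and $>$ is irrelevant since the set of $t$ where they differ is at most countable, hence Lebesgue-null).

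Next I would change variables $t=z^k$, $dt=k z^{k-1}dz$, which gives
\begin{align*}
E_Q\!\left[|Y|^k\right]=\int_0^\infty Q\!\left(|Y|\geq z\right) kz^{k-1}\,dz.
\end{align*}
Finally, the hypothesis \req{eq:2-sided-tail} is precisely $Q(|Y|\geq z)\leq G(z)$, and since $kz^{k-1}\geq 0$ the integrand inequality preserves under integration, yielding \req{eq:moment_bound}.

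The argument is essentially a one-line application of the layer-cake formula plus the change of variables $t=z^k$; there is no real obstacle. The only thing worth being slightly careful about is that the layer-cake identity is stated in terms of $Q(|Y|^k\geq t)$ rather than $Q(|Y|^k>t)$, but as noted these agree for Lebesgue-a.e.\ $t$ so either version suffices. No integrability hypothesis on the right-hand side is needed a priori, since if $\int_0^\infty G(z)kz^{k-1}dz=\infty$ the bound is vacuous.
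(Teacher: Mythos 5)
Your proof is correct and follows exactly the route the paper intends: the paper only remarks that the proof is "similar to that of Lemma \ref{lemma:tail}", i.e., the layer-cake/Fubini representation of the moment followed by the change of variables $t=z^k$ and the pointwise tail bound, which is precisely what you do.
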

The moment bounds \req{eq:moment_bound} can  be used to bound the MGF via its Taylor series; see  Chapter 2 in \cite{wainwright2019high}.  The following corollary gives two classical examples:
\begin{corollary}\label{cor:tail_behavior}
\begin{enumerate}
\item Sub-Exponential Tail (i.e., sub-gamma or Bernstein bound):

Suppose \req{eq:2-sided-tail} holds with $G(r)=Ce^{-\beta r}$ for some $C>0$, $\beta>0$.  

Then, for $0\leq\lambda<\beta$, we have
\begin{align}
\Lambda_Q^{\log(dQ/dP)}(\lambda)\leq& \lambda R(Q\|P) +\log\left(1+\frac{C\lambda^2/\beta^2}{1-\lambda/\beta}\right)\label{eq:Lambda2_1}\\
\leq&  \lambda R(Q\|P) +\frac{C\lambda^2/\beta^2}{1-\lambda/\beta}. \label{eq:Lambda2}
\end{align}
\item Sub-Gaussian Tail:
 
 A sub-Gaussian bound,
 \begin{align} \label{eq:Lambda3}
\Lambda_Q^{\log(dQ/dP)}(\lambda)\leq  R(Q\|P)\lambda+\beta^2\lambda^2,\,\,\,\lambda\geq 0,
\end{align}
holds under any one of the following conditions:
\begin{enumerate}
\item If \req{eq:2-sided-tail} holds with $G(r)=2e^{-cr^2}$ then the sub-Gaussian bound \req{eq:Lambda3} holds with $\beta=3/\sqrt{2c}$.
\item Let  $Y\sim N(0,\tau^2)$ be a Gaussian random variable such that \req{eq:2-sided-tail} holds with $G(r)= c \,\mathbb{P}(|Y|\geq r)$ for some   $c\geq 1$. Then \req{eq:Lambda3} holds with $\beta=c\tau$.
\item If we have $a>0$ such that
\begin{align}
E_Q\left[\exp\left( a(\log(dQ/dP)-R(Q\|P))^2\right)\right]\leq 2
\end{align}
then \req{eq:Lambda3} holds with $\beta=\sqrt{3/(2a)}$.
\end{enumerate}

\end{enumerate}
\end{corollary}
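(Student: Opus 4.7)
The unified strategy is to center the log-likelihood at its $Q$-mean and apply Lemma \ref{lemma:2-sided-tail} to the Taylor expansion of the moment generating function. Specifically, I would set $Z = \log(dQ/dP) - R(Q\|P)$, so that $E_Q[Z] = 0$ and
\begin{align*}
\Lambda_Q^{\log(dQ/dP)}(\lambda) = \lambda R(Q\|P) + \log E_Q[e^{\lambda Z}].
\end{align*}
Since $E_Q[Z] = 0$ and $\lambda^k E_Q[Z^k] \leq \lambda^k E_Q[|Z|^k]$ for $\lambda \geq 0$, Taylor expansion together with the moment bound \req{eq:moment_bound} yields
\begin{align*}
E_Q[e^{\lambda Z}] \leq 1 + \sum_{k=2}^\infty \frac{\lambda^k}{k!} E_Q[|Z|^k] \leq 1 + \sum_{k=2}^\infty \frac{\lambda^k}{k!} \int_0^\infty G(z)\, k z^{k-1}\, dz.
\end{align*}
Every bound in the corollary then reduces to evaluating this double series for the specific $G$ at hand.

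For part 1 with $G(r) = Ce^{-\beta r}$, the inner integral equals $C k!/\beta^k$, so the outer sum collapses to a geometric series giving
\begin{align*}
E_Q[e^{\lambda Z}] \leq 1 + C \sum_{k=2}^\infty (\lambda/\beta)^k = 1 + \frac{C(\lambda/\beta)^2}{1 - \lambda/\beta}, \qquad 0 \leq \lambda < \beta.
\end{align*}
Taking logs produces \req{eq:Lambda2_1}, and \req{eq:Lambda2} then follows from $\log(1+x) \leq x$.

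For part 2 each sub-case reduces, by the same template, to controlling a Gaussian-type moment series. In (a), the substitution $u = cz^2$ turns the moment integral into $E_Q[|Z|^k] \leq k \Gamma(k/2) c^{-k/2}$; comparing this series with the coefficients of $e^{\beta^2 \lambda^2}$, treating even $k = 2m$ directly and controlling the odd terms via Cauchy--Schwarz $E_Q[|Z|^{2m+1}] \leq (E_Q[Z^{2m}] E_Q[Z^{2m+2}])^{1/2}$, then yields the advertised $\beta = 3/\sqrt{2c}$. In (b), the moment integral gives $E_Q[|Z|^k] \leq c\,E[|Y|^k]$ directly, and the resulting series is dominated by (a constant multiple of) the MGF of the Gaussian $|Y|$, which is closed-form after completing the square and yields $\beta = c\tau$. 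Finally (c) is the easiest: a single Markov-inequality argument applied to $\exp(aZ^2)$ converts the exponential-second-moment hypothesis into a tail bound of exactly the form assumed in (a), so (c) reduces to (a) with $c$ replaced by $a$ (plus a factor of $2$ absorbed into the constant $3/\sqrt{2a}$).

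The hard part is case 2(a): the moment bound from Lemma \ref{lemma:2-sided-tail} differs from the true Gaussian moments by a $\sqrt{k}$-type factor, so the comparison between the bounding series and $e^{\beta^2 \lambda^2}$ is not tight, and the constant $3/\sqrt{2c}$ encodes this slack rather than any sharp inequality. A cleaner alternative I would likely pursue is to bypass the Taylor series entirely and bound $E_Q[e^{\lambda Z}] \leq E_Q[e^{\lambda |Z|}] = 1 + \lambda \int_0^\infty e^{\lambda t} Q(|Z| \geq t)\, dt$ directly, completing the square inside the exponent of the resulting Gaussian-tail integral; this avoids the even/odd bookkeeping at the cost of slightly different constants. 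Either route is routine; the only genuinely delicate task is tracking the constants.
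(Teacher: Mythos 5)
Your part 1 is exactly the paper's argument: Lemma \ref{lemma:2-sided-tail} gives $E_Q[|Z|^k]\leq Ck!\beta^{-k}$, the Taylor series of the MGF collapses to a geometric series, and $\log(1+x)\leq x$ yields \req{eq:Lambda2}. No difference there. For part 2 the paper does not actually supply a proof --- it defers to Theorem 2.1 of Wainwright and to Rivasplata's note --- so your sketches are filling a gap the authors left open, and cases (a) and (b) follow the standard template in those references (moment bounds from the tail, term-by-term comparison with the coefficients of $e^{\beta^2\lambda^2}$, Cauchy--Schwarz for odd moments). The one concrete issue is 2(c): your reduction via Markov's inequality converts the hypothesis $E_Q[e^{aZ^2}]\leq 2$ into the tail bound $Q(|Z|\geq r)\leq 2e^{-ar^2}$ and then invokes (a), which yields $\beta=3/\sqrt{2a}$, i.e.\ $\beta^2=9/(2a)$; the corollary claims the strictly better $\beta=\sqrt{3/(2a)}$, i.e.\ $\beta^2=3/(2a)$. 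The factor of $2$ is not the problem --- the exponent is. To get the stated constant you need a direct argument from the exponential-square-moment condition (e.g.\ the elementary inequality $e^{x}\leq x+e^{x^2}$ combined with Jensen for $\lambda^2\leq a$, plus Young's inequality $\lambda Z\leq \lambda^2/(2s)+sZ^2/2$ for large $\lambda$), rather than the lossy detour through the tail bound. With that repair, your proposal covers everything the corollary asserts, and in more detail than the paper does.
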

\begin{remark}
We  call CGF bounds with the general forms \req{eq:Lambda2_1} (or \req{eq:Lambda2}) and \req{eq:Lambda3} sub-gamma (or Bernstein) and sub-Gaussian respectively. The classical form of the sub-gamma bound is \req{eq:Lambda2}, which often leads to simpler analytical formulas, but for practical purposes there is little reason to not use the tighter bound \req{eq:Lambda2_1}. 
\end{remark}
\begin{proof}
\begin{enumerate}
\item  First use Lemma \ref{lemma:2-sided-tail} to obtain 
\begin{align}
&E_Q\left[\left|\log(dQ/dP)-R(Q\|P)\right|^k\right]\leq \int_0^\infty Ce^{-\beta z} kz^{k-1}dz=Ck!\beta^{-k},\,\,\,k\geq 2.
\end{align}
(i.e., the Bernstein condition holds; see equation 2.16 in \cite{wainwright2019high}.)   Using this, we can bound the Taylor series
\begin{align}
\Lambda_Q^{\log(dQ/dP)}(\lambda)\leq \log\left[ e^{\lambda R(Q\|P)}\left(1+\sum_{k=2}^\infty\frac{\lambda^k}{k!}Ck!\beta^{-k}\right)\right]\leq \lambda R(Q\|P) +\log\left(1+\frac{C\lambda^2/\beta^2}{1-\lambda/\beta}\right).
\end{align}
\item The proof uses similar ideas to the sub-exponential case and the proof of Lemma \ref{lemma:tail}; see the proof of Theorem 2.1 in \cite{wainwright2019high} as well as \cite{Rivasplata2012SubgaussianRV}.
\end{enumerate}
\end{proof}

We end this section with several examples that illustrate the sub-Gamma and sub-Gaussian cases:
 \begin{enumerate}
\item Sub-gamma (i.e., Bernstein) bounds: 
\begin{enumerate}

\item  Let $P=N(\mu_1,\sigma_1)$, $Q=N(\mu_2,\sigma_2)$ be Gaussian distributions on $\mathbb{R}$ with means $\mu_i$ and variances $\sigma_i^2$, $i=1,2$, with $\sigma_2>\sigma_1$.  The R{\'e}nyi divergence (see \cite{GIL2013124}) is
\begin{align}\label{eq:gauss_Renyi}
R_\alpha(Q\|P)=\frac{1}{\alpha}\log(\sigma_1/\sigma_2)+\frac{1}{2}\frac{1}{\alpha(\alpha-1)}\log(\sigma_1^2/\sigma_\alpha^2)+\frac{(\mu_1-\mu_2)^2}{2\sigma_\alpha^2},\,\,\,\sigma_\alpha^2=\sigma_1^2-(\alpha-1)(\sigma_2^2-\sigma_1^2)
\end{align}
for all $\alpha>1$ that satisfy $\sigma_\alpha^2>0$. 

 One can bound
\begin{align}
\log(\sigma_1^2/\sigma_\alpha^2)=\log\left(1+\frac{(\alpha-1)(\sigma_2^2/\sigma_1^2-1)}{1-(\alpha-1)(\sigma_2^2/\sigma_1^2-1)}\right)\leq \frac{(\alpha-1)(\sigma_2^2/\sigma_1^2-1)}{1-(\alpha-1)(\sigma_2^2/\sigma_1^2-1)},
\end{align}
and hence
\begin{align}\label{eq:Gauss_Lambda_bound}
\lambda_Q^{\log(dQ/dP)}(\lambda)\leq \lambda\log(\sigma_1/\sigma_2)+\frac{1}{2}\frac{\lambda(\sigma_2^2/\sigma_1^2-1)}{1-\lambda(\sigma_2^2/\sigma_1^2-1)}+\frac{(\mu_1-\mu_2)^2}{2(\sigma_1^2-\lambda(\sigma_2^2-\sigma_1^2))}(1+\lambda)\lambda.
\end{align}
This can be bounded above by a Bernstein-type CGF (\ref{eq:Lambda2}) with $\beta^{-1}= \sigma_2^2/\sigma_1^2-1$. It should be emphasized that the upper bound \req{eq:Gauss_Lambda_bound} is only used to show that this family is  of Bernstein type. If one is explicitly interested in alternative models consisting  of Gaussians, one should simply use the exact formula \req{eq:gauss_Renyi} to define the corresponding ambiguity set.

\item A Bernstein bound can also be obtained directly from \req{eq:moment_bound} in the case where $\log(dQ/dP)$ is bounded, $|\log(dQ/dP)-R(Q\|P)|\leq M$ and $\Var_Q[\log(dQ/dP)]\leq\sigma^2$.  Using the Taylor series of the MGF, we obtain
\begin{align}
\Lambda_Q^{\log(dQ/dP)}(\lambda)=&\lambda R(Q\|P)+\log E_Q\left[\sum_{n=0}^\infty\frac{\lambda^n}{n!}\left(\log(dQ/dP)-R(Q\|P)\right)^n\right]\\
\leq&\lambda R(Q\|P)+\log\left(1+\frac{\sigma^2}{2M^2}\sum_{n=2}^\infty (\lambda M)^n\right)\notag\\
=&\lambda R(Q\|P)+\log\left(1+\frac{\sigma^2\lambda^2/2}{1-M\lambda}\right),\,\,\, 0\leq \lambda<1/M.\notag
\end{align}

\begin{comment}
\item  Let $dP=\mu e^{-\mu x}$, $x\geq 0$ and $dQ=e^{V(x)} P(dx)$ where $-a_- x-b_-\leq V(x)\leq a_+ x+b_+$ with $0<a_\pm<\mu$ (normalization factor is absorbed into $V$, and hence into the  constants $b_\pm$).  Then one can compute
\begin{align}
Q\left(\pm (\log(dQ/dP)-R(Q\|P))\geq r\right)\leq Q(x\geq (r\pm R(Q\|P)-b_\pm)/a_\pm),
\end{align}
which leads to 
\begin{align}
&Q\left(|\log(dQ/dP)-R(Q\|P)|\geq r\right)\leq Ce^{-\beta r},\,\,\,r\geq 0,\\
&\beta=\min\{\mu/a_\pm -1\},\,\,\,C=\frac{\mu e^{b_+}}{\mu-a_+}e^{(\mu/a_+-1)(b_+-R(Q\|P))}+\frac{\mu e^{b_-}}{\mu-a_-}e^{(\mu/a_--1)(R(Q\|P)+b_-)}
\end{align}
Therefore, the conditions of part 1 of Corollary \ref{cor:tail_behavior} are satisfied, and we have the sub-gamma bound
\begin{align}
\Lambda_Q^{\log(dQ/dP)}(\lambda)\leq  \lambda R(Q\|P) +\log\left(1+\frac{C\lambda^2/\beta^2}{1-\lambda/\beta}\right),\,\, 0<\lambda<\beta.
\end{align}
\end{comment}

\end{enumerate}

\item Sub-Gaussian bounds:
 \begin{enumerate}
\item Let $P=N(\mu_1,\Sigma)$, $Q=N(\mu_2,\Sigma)$ be Gaussian distributions on $\mathbb{R}^d$ with the same covariance matrices.  Then  
\begin{align}
R_\alpha(Q\|P)=\frac{1}{2}(\mu_1-\mu_2)^T\Sigma(\mu_1-\mu_2)=R(Q\|P)
\end{align}
for all $\alpha\geq 1$ (see \cite{GIL2013124}). Hence
\begin{align}
\Lambda^{\log(dQ/dP)}_Q(\lambda)=R(Q\|P)\lambda+R(Q\|P)\lambda^2,
\end{align}
which is of sub-Gaussian form (\ref{eq:Lambda3}).
\item The Hoeffding bound, \req{eq:Hoeff_bound}, is also of sub-Gaussian type.
\end{enumerate}
 \end{enumerate}

\section*{Acknowledgments}
The research of J.B., M.K., and L. R.-B.  was partially supported by NSF TRIPODS  CISE-1934846.  
The research of M. K. and L. R.-B.   was partially supported by the National Science Foundation (NSF) under the grant DMS-1515712 and by the Air Force Office of Scientific Research (AFOSR) under the grant FA-9550-18-1-0214. 
The research of P.D. was supported in part by the National Science Foundation (NSF) under the grant DMS-1904992 and by the Air Force Office of Scientific Research (AFOSR) under the grant FA-9550-18-1-0214.
The research of J.W. was partially supported by the Defense Advanced Research Projects Agency (DARPA) EQUiPS program under the grant W911NF1520122.

\section*{References}
%\printbibliography 
\bibliographystyle{elsarticle-num.bst}
\bibliography{ref}

\end{document}